\newtheorem{theorem}{Theorem}[section]
\newtheorem{lemma}[]{Lemma}[section]
\newtheorem{proposition}[]{Proposition}[section]
\newtheorem{remark}[]{Remark}[section]
\newtheorem{assumption}[]{Assumption}[section]
\DeclareMathOperator*{\argsup}{arg\,sup}
\begin{document}


\title{BNB autoregressions  for modeling  integer-valued time series with extreme observations\footnotemark \footnotetext{Email address: \texttt{p.gorgi@vu.nl}}}

\author[]{Paolo Gorgi}

\affil[]{Vrije Universiteit Amsterdam, The Netherlands\\ Tinbergen Institute, The Netherlands}

{\let\newpage\relax\maketitle}

\begin{abstract}
This  article introduces a general class of heavy-tailed autoregressions for modeling integer-valued time series with outliers. The proposed specification  is based on a heavy-tailed mixture of negative binomial distributions that features an observation-driven dynamic equation for the conditional expectation. The existence of a unique stationary and  ergodic  solution for the   class of  autoregressive processes is shown under a general contraction condition. The estimation of the model can be easily performed by  Maximum Likelihood given the closed form of the likelihood function. The strong consistency and the asymptotic normality of the estimator are  formally derived. Two examples of specifications  illustrate the  flexibility of the approach and the relevance of the theoretical results. In particular,  a linear dynamic equation and a score-driven  equation for the conditional expectation are considered. The score-driven  specification is shown to be particularly appealing as it delivers a robust filtering method  that attenuates the impact of outliers. An empirical application to the time series of narcotics trafficking reports in Sydney illustrates the effectiveness of the method in handling extreme observations. 
\end{abstract}

\emph{Key words:}  heavy-tailed distributions, integer-valued time series, observation-driven models,  robust filtering. \\


\section{Introduction} 


Time series data  with integer-valued observations are often encountered in empirical applications.  Classical continuous-response models, such as autoregressive moving average (ARMA) models, are not suited for the  modeling of such series. Over the last few decades, researchers have developed  methods that can properly account for the discreteness of the data. A standard approach is to consider observation-driven models that feature time variation in the intensity parameter of the Poisson distribution \citep{fokianos2009poisson, ferland2006integer, davis2003observation}.     
A limitation of the Poisson distribution is that it imposes  equidispersion, i.e.~mean equal to the variance. Equidispersion is  typically restrictive in empirical applications and therefore  overdispersed distributions, such as the negative binomial, are often considered \citep{davis2009negative, zhu2011negative}. Other extensions considered in the literature  include multivariate integer-valued  models  and the use of  zero-inflated distributions, which are suited for time series with large numbers of zeros. We refer the reader to \cite{davis2016handbook} for an overview of recent developments.

Extreme observations, or outliers, are often present when analyzing time series data.  The study of time series with outliers has a long history that dates back to \cite{fox1972outliers}. Ignoring extreme observations in the dataset leads  to statistical models that offer a poor description of the series of interest. Additionally, statistical inference can also be problematic in the presence of outilers.  There is a vast literature  on modeling continuous-valued time series with extreme observations. Models are typically embedded with  heavy-tailed distributions that are capable of describing  outliers as tail events.  The Student's t-distribution is often used for this purpose and robust specifications for the dynamic component of the model are  employed to attenuate the impact of outliers \citep{creal2011dynamic,HL2014}. On the other hand, to the best of our knowledge, the current literature lacks modeling methods for time series of integer-valued data when outliers are present. 


In this article, we introduce a general class of observation-driven models for integer-valued time series data with extreme observations. The approach is based on a heavy-tailed mixture of negative binomial distributions, known as the beta negative binomial (BNB) distribution.  The class of models features a dynamic location parameter and a BNB conditional distribution, which  describes extreme observations as tail events. We derive conditions for stationarity, ergodicity and finiteness  of moments for the proposed class of  stochastic processes.  Additionally, we show that   inference can be easily performed  by Maximum Likelihood (ML), given that the likelihood function is available in closed form. The strong consistency and the asymptotic normality of the ML estimator are proved under general conditions.  We consider and study two different specifications of the dynamic component of the model. The first is  a simple linear autoregression for the conditional mean. Instead, the second is based on the Generalized Autoregressive Score (GAS) framework of \cite{Creal2013} and \cite{H2013}. This second specification  delivers a robust filter  that attenuates the impact of extreme observations on the conditional expectation of the BNB process. Finally, we present an empirical analysis to the time series of police reports of narcotics trafficking in Sydney, Australia.   The results illustrate the capability  of the proposed approach in modeling time series  data with extreme observations.

The paper is structured as follows. Section \ref{section0} provides a brief review of the BNB distribution. Section \ref{section1} introduces the class of BNB autoregressive processes and discusses their stochastic properties. Section \ref{section2} derives the asymptotic properties of the ML estimator.  Section \ref{section3} introduces the linear specification of the model and discusses its properties. Section \ref{section4} introduces the score-driven specification. Section \ref{section5} presents  the empirical application.  Section  \ref{section6} concludes.

\section{Preliminaries} 
\label{section0}
We start by reviewing some properties of the BNB distribution, which will be useful in the rest of the paper.  The BNB distribution arises as a beta mixture of negative binomial distributions. In particular, let  $Y$ conditional on $P$  have a negative binomial distribution, $Y|P\sim \mathcal{NB}(r,P)$, with dispersion parameter $r$ and success probability $P$.  Assume further that $P$ has a beta distribution, $P\sim \mathcal{B}eta(\alpha,\beta)$, with shape  parameters $\alpha$ and $\beta$. Then, the marginal distribution of $Y$ is  BNB  with the following probability mass function (pmf)
\begin{align*}
\mathbb{P}(Y=y)=\frac{\Gamma(y+r)}{\Gamma(y+1)\Gamma(r)}\frac{B(\alpha+r,\beta+y)}{B(\alpha,\beta)},\quad \text{for} \quad y\in\mathbb{N},
\end{align*}
 where $\Gamma(\cdot)$ denotes the gamma function and $B(\cdot,\cdot)$ the beta function. The parameter $\alpha$ is the tail parameter of the BNB, which determines the heaviness of the right tail. The smaller $\alpha$ the heavier the tail. 
Throughout the paper, we parametrize  the BNB distribution in terms of its mean. More specifically, we consider $\beta=(\alpha-1) \lambda/r$. In this way, the parameter $\lambda$ represents the mean of the BNB distribution when the mean is finite,  which is the case when  $\alpha>1$. We say that $Y\sim \mathcal{BNB}(\lambda, r, \alpha)$ has a BNB distribution with mean $\lambda >0$, dispersion parameter $r>0$ and tail parameter $\alpha>1$ if 
\begin{align}\label{par}
\mathbb{P}(Y=y)=\frac{\Gamma(y+r)}{\Gamma(y+1)\Gamma(r)}\frac{B\Big(\alpha+r,(\alpha-1) \lambda/r+y\Big)}{B\Big(\alpha ,(\alpha-1) \lambda/r\Big)}, \quad y\in\mathbb{N}.
\end{align}
  The BNB distribution enables us to account for extreme observations, which can be seen as tail events.
Furthermore, we note that the BNB can approximate arbitrarily well the negative binomial distribution as well as the Poisson distribution.  As the  tail parameter diverges, $\alpha\rightarrow \infty$, the BNB distribution $ \mathcal{BNB}(\lambda,r,\alpha)$
converges to a negative binomial distribution with dispersion parameter $r$ and success probability $\lambda/(r+\lambda)$. Further, as the dispersion parameter diverges, $r\rightarrow \infty$, the BNB converges to a Poisson distribution with mean $\lambda$.
For a more detailed review of the BNB distribution, we refer the reader to \cite{wang2011one}.

\section{BNB autoregressive  models} 
 \label{section1}

Consider a time series of counts $\{y_t\}_{t\in \mathbb{Z}}$ with the following conditional distribution
\begin{align}
y_t|\mathcal{F}_{t-1} \sim \mathcal{BNB}(\lambda_t,r,\alpha), \quad t\in \mathbb{Z},
\label{yt_bnb}
\end{align}
where $r>0$, $\alpha>1$, and    $\mathcal{F}_{t}$  denotes the $\sigma$-field generated by $\{y_{t},y_{t-1}\dots\}$. 
The conditional mean process $\lambda_t=\mathbb{E}(y_t|\mathcal{F}_{t-1})$ is specified by the following stochastic recurrence equation (SRE)
\begin{align}
\lambda_{t+1} = g_{\pmb{\theta}}(y_t, \lambda_t), \quad t\in \mathbb{Z},
\label{bnb}
\end{align}
where $g_{\pmb{\theta}}(\cdot,\cdot)$ is a parametric updating function that maps from $\mathbb{N}\times \mathbb{R}^+$ into $\mathbb{R}^+$, and $\pmb \theta\in \mathbb{R}^{n}$ is a parameter vector. We denote by  $\pmb \kappa\in \mathbb{R}^{n+2}$   the entire parameter vector of the model $\pmb\kappa = (\pmb  \xi^\top, \pmb  \theta^\top)^\top$, where $\pmb  \xi = (r,\alpha)^\top$.  Note that in the above formulation, for simplicity of exposition, $\lambda_t$ is assumed to be $\mathcal{F}_{t-1}$-measurable. Below, we show that under a contraction condition on $g_{\pmb{\theta}}$ the model's equations admit a unique stationary and ergodic causal solution  and   $\lambda_t$ is  $\mathcal{F}_{t-1}$-measurable.


The BNB autoregressive model specified in (\ref{yt_bnb}) and  (\ref{bnb}) can describe extreme observations in time series data by means of the heavy-tailed BNB conditional pmf. A small value  of the tail parameter $\alpha$  indicates that extreme observations are more likely to occur.  The $m$-th conditional moment of the BNB autoregressive process $\mathbb{E}(y_t^m|\mathcal{F}_{t-1})$ is finite  if and only if $\alpha>m$. However, as we shall discuss below, finiteness of unconditional moments requires further conditions on the updating function $g_{\pmb{\theta}}$.  In Sections \ref{section3} and \ref{section4}, two examples of specifications of the updating function are presented. We shall see that a robust updating function may be desirable to reduce the impact of outliers on the conditional mean $\lambda_t$.

In the rest of the section, we study the stochastic properties of the BNB autoregression described by equations (\ref{yt_bnb}) and  (\ref{bnb}). We derive conditions on the updating function $g_{\pmb\theta}$ that ensure the  process to be strictly stationary and ergodic and that guarantee the existence of the first two unconditional moments of $y_t$.
 The first result that we obtain is the  stationarity and ergodicity of the process and existence of the first moment under a contraction condition. The proof of the result is based on the approach of \cite{doukhan2008weakly} for weakly dependent chains with infinite memory. We highlight that this type of contraction condition  is widely used in the literature of integer-valued processes, see , for instance,  \cite{davis2016theory} for an application to exponential families. We also obtain that the contraction condition  ensures that $\lambda_t$ is $\mathcal{F}_{t-1}$-measurable.
\begin{theorem}[stationarity and ergodicity]\label{th1}
Consider the  BNB autoregressive  process given by Equations (\ref{yt_bnb}) and  (\ref{bnb}). Furthermore, assume that the  following contraction condition holds
\begin{align}\label{contraction}
|g_{\pmb{\theta}}(y,\lambda)-g_{\pmb{\theta}}( y^*, \lambda^*)|\le c_1|y- y^*|+c_2 |\lambda-\lambda^*|,
\end{align}
where $c_1$ and $c_2$ are some positive constants such that $c_1 + c_2<1$.
Then,  the following results hold true:\\
(i) There  exists a unique strictly stationary and ergodic  causal solution $\{(y_t,\lambda_t)\}_{t\in\mathbb{Z}}$ with a finite first moment $\mathbb{E}(y_t)<\infty$.\\
(ii)  There exists a measurable function $g_{\pmb \theta}^\infty$ such that $\lambda_t=g_{\pmb \theta}^\infty(y_{t-1},y_{t-2},\dots)$, i.e.~$\lambda_t$ is $\mathcal{F}_{t-1}$-measurable.
\end{theorem}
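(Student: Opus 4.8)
My plan is to recast the dynamics as a stochastic recurrence equation driven by an i.i.d.\ sequence and then to verify the hypotheses of the infinite-memory chain theory of \cite{doukhan2008weakly}. The first step is a quantile coupling: let $\{U_t\}_{t\in\mathbb{Z}}$ be i.i.d.\ uniform on $(0,1)$ and let $F_\lambda$ denote the cumulative distribution function of the $\mathcal{BNB}(\lambda,r,\alpha)$ law in (\ref{par}). Setting $y_t=F_{\lambda_t}^{-1}(U_t)$ reproduces the conditional law (\ref{yt_bnb}), and substituting into (\ref{bnb}) turns the recursion into the Markovian SRE $\lambda_{t+1}=\psi(\lambda_t,U_t)$ with $\psi(\lambda,u)=g_{\pmb\theta}\big(F_\lambda^{-1}(u),\lambda\big)$. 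Equivalently, iterating backward expresses $y_t$ as a function of $\{y_{t-1},y_{t-2},\dots\}$ and $U_t$, which is precisely the infinite-memory form $X_t=F(X_{t-1},X_{t-2},\dots;U_t)$ to which the framework applies.

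The key estimate is an average contraction bound for $\psi$. Applying (\ref{contraction}) pointwise in $u$ and integrating over $U\sim\mathrm{Unif}(0,1)$ gives
\begin{align*}
\mathbb{E}\big|\psi(\lambda,U)-\psi(\lambda^*,U)\big|\le c_1\,\mathbb{E}\big|F_\lambda^{-1}(U)-F_{\lambda^*}^{-1}(U)\big|+c_2|\lambda-\lambda^*|.
\end{align*}
The crux is to show that the monotone (quantile) coupling realizes the $1$-Wasserstein distance between the two BNB laws and that this distance equals the difference of the means, i.e.\ $\mathbb{E}|F_\lambda^{-1}(U)-F_{\lambda^*}^{-1}(U)|=|\lambda-\lambda^*|$. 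I would establish this from the beta mixture representation: a larger mean $\lambda$ corresponds to a larger $\beta=(\alpha-1)\lambda/r$, hence to a stochastically smaller success probability $P\sim\mathcal{B}eta(\alpha,\beta)$ and, through the negative binomial kernel, to a stochastically larger $Y$. I expect this stochastic-monotonicity argument to be the main obstacle, and it is genuinely necessary rather than cosmetic: the quantile coupling is always $W_1$-optimal, so only stochastic ordering forces $W_1$ down to the mean difference and thereby collapses the average contraction of $\psi$ to the single scalar condition $c_1+c_2<1$.

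With the resulting bound $\mathbb{E}|\psi(\lambda,U)-\psi(\lambda^*,U)|\le(c_1+c_2)|\lambda-\lambda^*|$ in hand, part (i) follows by invoking \cite{doukhan2008weakly}. The integrability hypothesis at a fixed point is immediate: $\alpha>1$ guarantees $F_{\lambda^\circ}^{-1}(U)$ has finite mean, and (\ref{contraction}) yields the linear growth bound $g_{\pmb\theta}(y,\lambda)\le g_{\pmb\theta}(0,0)+c_1 y+c_2\lambda$; taking expectations in the stationary recursion and using $\mathbb{E}(y_t\mid\mathcal{F}_{t-1})=\lambda_t$ gives $\mathbb{E}(\lambda_t)\le g_{\pmb\theta}(0,0)/(1-c_1-c_2)<\infty$, whence $\mathbb{E}(y_t)=\mathbb{E}(\lambda_t)<\infty$. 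The theory then delivers a unique stationary solution that is a measurable function of $\{U_t,U_{t-1},\dots\}$; being a Bernoulli shift of an i.i.d.\ sequence, the pair $\{(y_t,\lambda_t)\}$ is ergodic.

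For part (ii) I would iterate (\ref{bnb}) backward in the observations. Fixing an arbitrary initialization $\lambda^\circ$ at time $t-m$ and running the recursion forward with the realized observations produces $\lambda_t^{(m)}=h_m(y_{t-1},\dots,y_{t-m};\lambda^\circ)$, which is $\mathcal{F}_{t-1}$-measurable. Using only the Lipschitz dependence of $g_{\pmb\theta}$ on its second argument, $|\lambda_t-\lambda_t^{(m)}|\le c_2^{\,m}\,|\lambda_{t-m}-\lambda^\circ|$, and since $\{\lambda_s\}$ is stationary with finite first moment, a Borel--Cantelli argument gives $c_2^{\,m}|\lambda_{t-m}|\to 0$ almost surely. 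Hence $\lambda_t^{(m)}\to\lambda_t$ a.s., so $\lambda_t=g_{\pmb\theta}^\infty(y_{t-1},y_{t-2},\dots)$ with $g_{\pmb\theta}^\infty:=\lim_m h_m$, which establishes the $\mathcal{F}_{t-1}$-measurability and completes the proof.
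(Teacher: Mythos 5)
Your proposal is correct and follows essentially the same route as the paper: the quantile coupling $y_t=F_{\lambda_t}^{-1}(U_t)$, verification of the average-contraction condition of \cite{doukhan2008weakly} by showing that stochastic monotonicity of the $\mathcal{BNB}(\lambda,r,\alpha)$ family in $\lambda$ collapses the $W_1$ distance to $|\lambda-\lambda^*|$, and a backward-iteration/Lipschitz argument for the $\mathcal{F}_{t-1}$-measurability of $\lambda_t$. The only cosmetic differences are that the paper obtains the stochastic ordering by citing a likelihood-ratio ordering result for the BNB (Theorem 4.2 of \cite{wang2011one}) rather than deriving it from the beta--negative-binomial mixture representation, and it handles part (ii) by invoking Theorem 3.1 of \cite{Bougerol1993} rather than carrying out the Borel--Cantelli argument by hand.
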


Theorem \ref{th1} requires the sum of the Lipschitz coefficients $c_1$ and $c_2$ to be smaller than one to ensure stationarity, ergodicity and finiteness of the first unconditional moment of $y_t$.  However, a stricter contraction is needed to obtain the finiteness of the second moment. The next result imposes sufficient conditions on $c_1$ and $c_2$ to obtain  $\mathbb{E}(y_t^2)<\infty$, and hence the weak stationarity of the BNB autoregressive  process.
\begin{theorem}[weak stationarity]\label{th2} 
Assume  that $\alpha>2$ and that the contraction condition in (\ref{contraction}) holds with 
$$ \frac{(r+1)(\alpha-1)}{r(\alpha-2)} c_1^2 +c_2^2+2c_1c_2<1.$$
Then  $y_t$ has a finite second moment $\mathbb{E}(y_t^2)<\infty$. Hence $\{y_t\}_{t\in\mathbb{Z}}$ is weakly stationary.
\end{theorem}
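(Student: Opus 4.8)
The plan is to reduce the whole statement to a second-moment bound on the conditional mean process $\lambda_t$, exploiting the fact that the quadratic coefficient in the theorem's hypothesis is precisely the coefficient of $\lambda_t^2$ in the conditional second moment of the BNB distribution. The first step is therefore to compute $\mathbb{E}(y_t^2\mid\mathcal{F}_{t-1})$. Using the mixture representation $y_t\mid P\sim\mathcal{NB}(r,P)$ with $P\sim\mathcal{B}eta(\alpha,\beta)$ and $\beta=(\alpha-1)\lambda_t/r$, I would write $\mathbb{E}(y_t^2\mid\mathcal{F}_{t-1})=\mathbb{E}[\,\mathrm{Var}(y_t\mid P)+\mathbb{E}(y_t\mid P)^2\mid\mathcal{F}_{t-1}]$ and evaluate the required beta moments through the identity $\mathbb{E}[P^a(1-P)^b]=B(\alpha+a,\beta+b)/B(\alpha,\beta)$. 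These moments are finite exactly when $\alpha>2$, which is why that assumption is imposed. Substituting $\beta=(\alpha-1)\lambda_t/r$ and simplifying yields
\begin{equation*}
\mathbb{E}(y_t^2\mid\mathcal{F}_{t-1})=A\,\lambda_t^2+B\,\lambda_t,\qquad A=\frac{(r+1)(\alpha-1)}{r(\alpha-2)},\qquad B=\frac{\alpha-1+r}{\alpha-2}.
\end{equation*}
Since Theorem \ref{th1} already provides $\mathbb{E}(\lambda_t)=\mathbb{E}(y_t)<\infty$, taking expectations shows that $\mathbb{E}(y_t^2)<\infty$ holds as soon as $\mathbb{E}(\lambda_t^2)<\infty$, so the problem reduces to bounding the second moment of $\lambda_t$.

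To control $\mathbb{E}(\lambda_t^2)$ I would first linearize the contraction: taking $y^*=0$ and a fixed reference $\lambda^*$ in (\ref{contraction}) and using nonnegativity of $g_{\pmb\theta}$ gives $\lambda_{t+1}=g_{\pmb\theta}(y_t,\lambda_t)\le c_1 y_t+c_2\lambda_t+d$ with a finite constant $d=g_{\pmb\theta}(0,\lambda^*)+c_2\lambda^*$. The naive route of writing $\mathbb{E}(\lambda_t^2)\le\rho\,\mathbb{E}(\lambda_t^2)+\cdots$ directly on the stationary solution is vacuous if the moment is infinite, so instead I would run the model forward from a deterministic start $\bar\lambda_0=\lambda^*$, producing a (non-stationary) process $\{(\bar y_t,\bar\lambda_t)\}$ to which the same conditional-moment identities apply. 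Writing $\mu_t=\mathbb{E}(\bar\lambda_t)$ and $m_t=\mathbb{E}(\bar\lambda_t^2)$, the first-moment recursion $\mu_{t+1}\le(c_1+c_2)\mu_t+d$ with $c_1+c_2<1$ gives a uniform bound $\sup_t\mu_t<\infty$. Squaring the linearized bound, taking $\mathbb{E}(\cdot\mid\bar{\mathcal{F}}_{t-1})$ and substituting $\mathbb{E}(\bar y_t\mid\bar{\mathcal{F}}_{t-1})=\bar\lambda_t$ together with the identity above, one obtains
\begin{equation*}
\mathbb{E}(\bar\lambda_{t+1}^2\mid\bar{\mathcal{F}}_{t-1})\le \rho\,\bar\lambda_t^2+\big(c_1^2 B+2d(c_1+c_2)\big)\bar\lambda_t+d^2,\qquad \rho=A\,c_1^2+c_2^2+2c_1c_2,
\end{equation*}
where $\rho$ is exactly the left-hand side of the theorem's hypothesis and hence $\rho<1$. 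Taking full expectations and using $\sup_t\mu_t<\infty$ gives $m_{t+1}\le\rho\,m_t+C$ for a finite constant $C$, whence $\sup_t m_t\le m_0+C/(1-\rho)<\infty$ by induction.

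Finally I would pass from the forward iterates to the stationary solution. In the construction underlying Theorem \ref{th1}, the value at a fixed time of the process initialized $k$ steps in the past converges almost surely to the stationary $\lambda_t$ as $k\to\infty$, and this iterate has the same law as $\bar\lambda_k$, so $m_k$ bounds its second moment uniformly. Fatou's lemma then yields $\mathbb{E}(\lambda_t^2)\le\liminf_k m_k<\infty$, and combining this with the conditional-moment identity gives $\mathbb{E}(y_t^2)=A\,\mathbb{E}(\lambda_t^2)+B\,\mathbb{E}(\lambda_t)<\infty$. Weak stationarity is then immediate, since strict stationarity from Theorem \ref{th1} together with finite second moments makes the means constant and the autocovariances depend only on the lag. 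I expect the main obstacle to be this last passage to the limit: the self-referential inequality on the stationary process is vacuous when the moment is infinite, so it is the uniform second-moment bound on the non-stationary forward iterates, combined with the almost-sure convergence guaranteed by the contraction, that makes the argument rigorous; the BNB moment computation, though essential for pinning down the coefficient $A$, is otherwise a routine beta-mixture calculation.
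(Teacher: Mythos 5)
Your proof is correct, and its core is identical to the paper's: the same conditional second-moment identity $\mathbb{E}(y_t^2\mid\mathcal{F}_{t-1})=\frac{(r+1)(\alpha-1)}{r(\alpha-2)}\lambda_t^2+\frac{\alpha+r-1}{\alpha-2}\lambda_t$, the same linearization $\lambda_{t+1}\le c_1y_t+c_2\lambda_t+\text{const}$ from (\ref{contraction}), and the same resulting affine recursion for the pair $(\mathbb{E}\lambda_t,\mathbb{E}\lambda_t^2)$ whose quadratic coefficient is exactly the quantity $\rho$ assumed to be below one. Where you diverge is in how the recursion is closed. The paper stays on the stationary solution and iterates \emph{conditional} expectations, obtaining $\mathbb{E}_{t-k}(\text{\pmb{x}}_t)\le\sum_{i=0}^{k-2}\text{\pmb{D}}_1^i\text{\pmb{d}}_0+\text{\pmb{D}}_1^{k-1}\text{\pmb{x}}_{t-k+1}$ with $\text{\pmb{x}}_t=(\lambda_t,\lambda_t^2)^\top$ and a lower-triangular $\text{\pmb{D}}_1$ with spectral radius below one, then lets $k\to\infty$; keeping the remainder as the random variable $\text{\pmb{D}}_1^{k-1}\text{\pmb{x}}_{t-k+1}$ (rather than its possibly infinite expectation) is what sidesteps the vacuity you worry about, although the final passage from $\mathbb{E}_{t-k}(\text{\pmb{x}}_t)$ to $\mathbb{E}(\text{\pmb{x}}_t)$ is left implicit and itself needs a truncation or conditional-Fatou step. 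You instead run the recursion forward from a deterministic start, where every moment is finite by construction, get a uniform bound $\sup_t m_t<\infty$, and transfer it to the stationary solution through the backward-iteration coupling underlying the \cite{doukhan2008weakly} construction plus Fatou (strictly, the convergence there is in $L^1$, so you should pass to an a.s.\ convergent subsequence before applying Fatou, but that is cosmetic). Your variant is slightly longer but makes fully explicit the step the paper glosses over; the paper's variant is more compact and packages the two moments into a single matrix inequality. Both are valid.
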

We note that, besides a stricter contraction condition, Theorem \ref{th2} also  requires the tail parameter $\alpha$ to be greater than two. This is needed because the conditional second moment of $y_t$ is finite if only if $\alpha>2$.

The results in Theorems \ref{th1} and \ref{th2} are of key importance to derive the asymptotic properties of the ML estimator. In particular, as we shall see in Section \ref{section2}, the strict stationarity condition  in Theorem  \ref{th1} is sufficient for the consistency of the ML estimator. Instead, the additional conditions in Theorem \ref{th2} are needed for the asymptotic normality to hold. In Sections \ref{section3} and \ref{section4}, Theorems \ref{th1} and \ref{th2}  will be employed to establish the stochastic properties of a linear  and a score-driven BNB autoregression.

\section{Maximum Likelihood estimation} 
\label{section2}
In this section, we discuss the estimation of the BNB autoregression by ML. We derive conditions to ensure  consistency and  asymptotic normality of the ML estimator.
 We assume that a subset of a realized path from the BNB autoregressive process in (\ref{yt_bnb}) and (\ref{bnb}) with true parameter value $\pmb\kappa=\pmb\kappa_0$ is observed $\{y_t\}_{t=1}^T$. Here $T$ denotes the sample size.  The likelihood function is available in closed form through a prediction error decomposition. In particular, the average log-likelihood function  is
\begin{align*}
\hat L_T({\pmb{\kappa}})=\frac{1}{T}\sum_{t=1}^T \hat l_t({\pmb{\kappa}}) =\frac{1}{T}\sum_{t=1}^T \log p(y_t|\hat \lambda_t({\pmb{\theta}}), r, \alpha),
\end{align*}
where $p$ denotes the conditional pmf, given by
\begin{align*}
p(y_t|\hat \lambda_t({\pmb{\theta}}), r, \alpha)=\frac{\Gamma(y_t+r)}{\Gamma(y_t+1)\Gamma(r)}\frac{B\Big(\alpha+r,(\alpha-1) \hat \lambda_t({\pmb{\theta}})/r+y_t\Big)}{B\Big(\alpha ,(\alpha-1) \hat \lambda_t({\pmb{\theta}})/r\Big)}.
\end{align*}
The filtered time-varying parameter $\hat \lambda_t({\pmb{\theta}})$ is obtained recursively using the observed data $\{y_t\}_{t=1}^T$
\begin{align}
 \hat \lambda_{t+1}({\pmb{\theta}}) = g_{\pmb{\theta}}(y_t, \hat \lambda_t({\pmb{\theta}})),
  \label{filter}
 \end{align}
 where the recursion is initialized at a fixed point $\hat \lambda_{1}({\pmb{\theta}}) \in \mathbb{R}^+$. We note that  initializing the recursion in  (\ref{filter}) is needed since the observed data starts from time $t=1$. This is quite standard in the literature of observation-driven models.  Finally, the ML estimator $\hat {\pmb{\kappa}}_T$ is  defined as the maximizer of the likelihood function 
 \begin{align}
\hat {\pmb{\kappa}}_T =\argsup_{{\pmb{\kappa}} \in  {K}} \hat L_T({\pmb{\kappa}}),
\label{MLE}
\end{align}
where $K =\Xi \times\Theta$ with $\Xi \subset (0,\infty)\times (2,\infty)$ and $\Theta \subset \mathbb{R}^n$ being compact parameter sets.

\subsection{Consistency} 
In order to establish the consistency of the ML estimator, we first derive the stochastic limit properties of the filtered parameter $\hat \lambda_t$ defined in (\ref{filter}). Note that $\hat \lambda_t(\cdot)$ is a stochastic function that maps from $\Theta$ into $\mathbb{R}^+$.  The stability of the filtered parameter $\hat \lambda_t$ is often referred in the literature as invertibility \citep{SM2006,blasques2018feasible}. Because of the initialization, $\hat \lambda_t$ evaluated at the true parameter value, $\hat\lambda_t(\pmb\theta_0)$, does not correspond to the true conditional mean $\lambda_t$.  In the following, we show that $\{\hat\lambda_t\}_{t\in\mathbb{N}}$ converges exponentially a.s.~(e.a.s.)\footnote{A sequence of  random variables $\{\hat x_t\}_{t\in\mathbb{N}}$  converges e.a.s.~to another sequence $\{\tilde x_t\}_{t\in\mathbb{N}}$  if there is a constant $c>1$ such that $c^t |\hat x_t-\tilde x_t| \xrightarrow{a.s.}$ 0 as $t\rightarrow\infty$.}~and uniformly in $\Theta$ to a stationary and ergodic sequence of functions $\{\tilde\lambda_t\}_{t\in\mathbb{N}}$ such that $\tilde\lambda_t(\pmb\theta_0)=\lambda_t$ with probability one. We start by imposing a continuity condition on the updating function $g_{\pmb{\theta}}$, which ensures that $\pmb \theta \mapsto \hat \lambda_t({\pmb{\theta}})$ is continuous in $\Theta$ with probability one.
\begin{assumption}\label{a1}
The function  $({\pmb{\theta}},\lambda) \mapsto g_{\pmb{\theta}}(y, \lambda)$ is continuous in $\Theta \times \mathbb{R}^+ $ for any $y\in\mathbb{N}$.
\end{assumption}
Next, we assume that the contraction condition holds for all $\pmb{\theta}$ in the parameter set $\Theta$, which contains the true parameter value $\pmb{\theta}_0$. We note that this assumption is not restrictive since, in general,  $\Theta$ can be defined as a compact ball around the true parameter vector $\pmb{\theta}_0$.
\begin{assumption}\label{a2}
The contraction condition in (\ref{contraction}) is satisfied for any ${\pmb{\theta}} \in \Theta$. Furthermore $\pmb{\theta}_0\in\Theta$.
\end{assumption}
The next result ensures the uniform convergence over $\Theta$ of the filtered parameter $\hat \lambda_t({\pmb{\theta}})$ to a stationary and ergodic limit $\tilde \lambda_t({\pmb{\theta}})$. Here $\|\cdot\|_{\Theta}$ denotes the supremum norm. Given a function $f: \Theta \mapsto \mathbb{R}$, the supremum norm is  $\|f\|_{\Theta}=\sup_{\pmb\theta \in\Theta} |f(\pmb\theta)|$. 

\begin{proposition}[invertibility]
Let Assumptions \ref{a1} and \ref{a2} hold, then the filter $\{\hat \lambda_t({\pmb{\theta}})\}_{t \in \mathbb{N}}$ converses e.a.s.~and  uniformly over $\Theta$ to a unique stationary and ergodic sequence $\{\tilde \lambda_t({\pmb{\theta}})\}_{t \in \mathbb{Z}}$,
$$\|\hat \lambda_t - \tilde \lambda_t \|_\Theta \xrightarrow{e.a.s.}0, \quad t\rightarrow \infty,$$
for any initialization $\hat \lambda_1({\pmb{\theta}}) \in \mathbb{R}^+$. Furthermore, $\tilde \lambda_t$ has a bounded moment uniformly over $\Theta$, $\mathbb{E}\|\tilde\lambda_t\|_{\Theta}<\infty$.
\label{invertibility}
\end{proposition}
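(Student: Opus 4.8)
The plan is to cast the filtering recursion (\ref{filter}) as a stochastic recurrence equation on the Banach space $C(\Theta)$ of continuous functions $\pmb\theta \mapsto \lambda(\pmb\theta)$ equipped with the supremum norm $\|\cdot\|_\Theta$, exploiting the fact that, by Assumption \ref{a2}, the contraction condition (\ref{contraction}) holds with a deterministic constant $c_2<1$ uniformly over $\Theta$. First I would verify that each iterate $\hat\lambda_t$ belongs to $C(\Theta)$: this follows by induction, since $\hat\lambda_1$ is constant and the joint continuity of $(\pmb\theta,\lambda)\mapsto g_{\pmb\theta}(y,\lambda)$ from Assumption \ref{a1} propagates continuity through $\hat\lambda_{t+1}(\pmb\theta)=g_{\pmb\theta}(y_t,\hat\lambda_t(\pmb\theta))$. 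The random maps $\Phi_t:\lambda(\cdot)\mapsto g_{(\cdot)}(y_t,\lambda(\cdot))$ are then Lipschitz on $C(\Theta)$ with coefficient at most $c_2$, because the pointwise bound in (\ref{contraction}) gives $\|\Phi_t(\lambda)-\Phi_t(\lambda^*)\|_\Theta=\sup_{\pmb\theta}|g_{\pmb\theta}(y_t,\lambda(\pmb\theta))-g_{\pmb\theta}(y_t,\lambda^*(\pmb\theta))|\le c_2\|\lambda-\lambda^*\|_\Theta$ for all $\lambda,\lambda^*\in C(\Theta)$.

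Given the stationary and ergodic driving sequence $\{y_t\}$ from Theorem \ref{th1}, the sequence of maps $\{\Phi_t\}$ is itself stationary and ergodic. Since its Lipschitz coefficient is bounded by the deterministic constant $c_2<1$, the associated top Lyapunov exponent is negative, $\mathbb{E}\log\Lambda(\Phi_0)\le\log c_2<0$, and the integrability requirement $\mathbb{E}\log^+\|\Phi_0(u)\|_\Theta<\infty$ for a fixed $u\in C(\Theta)$ holds because, applying (\ref{contraction}) in the $y$-direction together with the compactness of $\Theta$, $\|\Phi_0(u)\|_\Theta\le c_1 y_0+\sup_{\pmb\theta}|g_{\pmb\theta}(0,u(\pmb\theta))|$ has finite expectation by $\mathbb{E}(y_0)<\infty$ and Assumption \ref{a1}. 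Invoking the stochastic recurrence equation theorem for random Lipschitz maps on a separable Banach space \citep{SM2006} then yields a unique stationary and ergodic $\{\tilde\lambda_t\}\subset C(\Theta)$ solving $\tilde\lambda_{t+1}=\Phi_t(\tilde\lambda_t)$, to which any initialized filter converges e.a.s. Equivalently, one may build $\tilde\lambda_t$ directly as the a.s.\ limit of the backward iterates, which form a Cauchy sequence in $\|\cdot\|_\Theta$ with geometrically decaying increments, and deduce the uniform convergence from the deterministic estimate $\|\hat\lambda_t-\tilde\lambda_t\|_\Theta\le c_2^{\,t-1}\|\hat\lambda_1-\tilde\lambda_1\|_\Theta$; choosing $1<c<c_2^{-1}$ gives $c^t\|\hat\lambda_t-\tilde\lambda_t\|_\Theta\to 0$ a.s.

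Finally, for the uniform moment bound I would apply (\ref{contraction}) around a fixed reference point $\lambda_*\in\mathbb{R}^+$ to obtain $\|\tilde\lambda_{t+1}\|_\Theta\le c_1 y_t+c_2\|\tilde\lambda_t\|_\Theta+K_0$, where $K_0:=c_2\lambda_*+\sup_{\pmb\theta\in\Theta}g_{\pmb\theta}(0,\lambda_*)$ is finite by the continuity of $g$ and the compactness of $\Theta$. Iterating this inequality backward along the stationary solution bounds $\tilde\lambda_t$ by the convergent geometric series $\sum_{j\ge 0}c_2^{\,j}(c_1 y_{t-1-j}+K_0)$, so that taking expectations together with $\mathbb{E}(y_0)<\infty$ yields $\mathbb{E}\|\tilde\lambda_t\|_\Theta\le (c_1\mathbb{E}(y_0)+K_0)/(1-c_2)<\infty$. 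The main obstacle is not any single estimate but the need to carry the whole argument uniformly over $\Theta$ in the function space $C(\Theta)$: one must ensure that the stationary limit is simultaneously a continuous function of $\pmb\theta$ and a measurable function of the past $\{y_{t-1},y_{t-2},\dots\}$, so that the supremum-norm convergence and the moment statement are well defined rather than merely holding pointwise in $\pmb\theta$.
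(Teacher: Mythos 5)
Your proposal is correct and follows essentially the same route as the paper: both cast the filter recursion as a stochastic recurrence equation on $\mathbb{C}(\Theta,\mathbb{R})$ with the supremum norm, verify Bougerol's conditions using the uniform Lipschitz constant $c_2<1$ from Assumption \ref{a2} and the integrability supplied by $\mathbb{E}(y_t)<\infty$ from Theorem \ref{th1}, and then obtain $\mathbb{E}\|\tilde\lambda_t\|_\Theta<\infty$ by iterating the contraction inequality backward into a geometric series. The only differences are cosmetic (you expand around a generic reference point $\lambda_*$ rather than $(0,0)$ and spell out the negative Lyapunov exponent explicitly).
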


Proposition \ref{invertibility} plays a crucial role to ensure that the  likelihood function $\hat l_t(\pmb\tau)$, which depends on  the approximate filter $\hat \lambda_t$, converges to a stationary and ergodic limit  $l_t(\pmb\tau)=p(y_t| \tilde\lambda_t({\pmb{\theta}}), r, \alpha)$, which depends on limit filter $\tilde\lambda_t$. In this way, $l_t(\pmb\tau_0)$ corresponds to the true conditional log-pmf of the BNB autoregressive model. 

Next, building on the invertibility result, we impose some additional conditions to obtain the strong consistency of the ML estimator. The next assumption imposes  a lower bound on the updating function.
\begin{assumption}[lower bound]\label{a3}
There is a constant  $\bar c>0$ such that $g_{\pmb{\theta}}(y,\lambda)\ge \bar c$ for any $({\pmb{\theta}},y,\lambda)\in \Theta\times \mathbb{N}\times \mathbb{R}^+$.
\end{assumption}

Finally, we impose an identifiability condition on the parametric updating function $g_{{\pmb{\theta}}}$. This condition is needed to ensure that different parameter values of ${\pmb{\theta}}$ give observationally different paths of $\tilde\lambda_t({\pmb \theta})$.
\begin{assumption}[identifiability]\label{a4}
For any ${\pmb{\theta}}_1,{\pmb{\theta}}_2\in\Theta$ and $\lambda \in \mathbb{R}^+$, the equality $g_{{\pmb{\theta}}_1}(y,\lambda)=g_{{\pmb{\theta}}_2}(y,\lambda)$ holds true for all $y \in \mathbb{N}$ if and only if ${\pmb{\theta}}_1={\pmb{\theta}}_2$.
\end{assumption}

Under these conditions, we obtain the strong consistency of the ML estimator.
\begin{theorem}[consistency]\label{th3}
Let Assumptions  \ref{a1}-\ref{a4} hold and let ${\pmb{\kappa}}_0\in K$, then the ML estimator defined in (\ref{MLE}) is strongly consistent, that is,
$$\hat {\pmb{\kappa}}_T\xrightarrow{a.s.}{\pmb{\kappa}}_0, \quad \text{as}  \quad  T\rightarrow \infty.$$
\end{theorem}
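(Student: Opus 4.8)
The plan is to follow the classical route to consistency of M-estimators adapted to observation-driven models, as in \cite{SM2006}: establish (i) that the data-dependent criterion $\hat L_T$ converges uniformly over the compact set $K$ to a non-stochastic limit $L(\pmb\kappa)=\mathbb{E}[l_t(\pmb\kappa)]$, and (ii) that $L$ is uniquely maximized at $\pmb\kappa_0$; together with compactness of $K$ these two ingredients yield $\hat{\pmb\kappa}_T\xrightarrow{a.s.}\pmb\kappa_0$ by the standard well-separated-maximizer argument. Here $l_t(\pmb\kappa)=\log p(y_t|\tilde\lambda_t(\pmb\theta),r,\alpha)$ is the log-pmf built from the stationary limit filter $\tilde\lambda_t$ of Proposition \ref{invertibility}.

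First I would pass from $\hat L_T$ to its stationary counterpart $L_T(\pmb\kappa)=\frac{1}{T}\sum_{t=1}^T l_t(\pmb\kappa)$. Showing $\|\hat L_T-L_T\|_K\xrightarrow{a.s.}0$ reduces, via a mean-value argument in $\lambda$, to a Lipschitz bound of the form $|\log p(y|\lambda,r,\alpha)-\log p(y|\lambda^*,r,\alpha)|\le K(y)\,|\lambda-\lambda^*|$ uniform over $\pmb\xi\in\Xi$, which I would combine with the e.a.s.\ convergence $\|\hat\lambda_t-\tilde\lambda_t\|_\Theta\to 0$ of Proposition \ref{invertibility} and a Ces\`aro/Borel--Cantelli argument to absorb the initialization effect. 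The lower bound of Assumption \ref{a3} is what keeps both $\hat\lambda_t$ and $\tilde\lambda_t$ bounded away from zero, controlling the sensitivity of the log-pmf in $\lambda$ (the beta function $B(\alpha,(\alpha-1)\lambda/r)$ blows up as $\lambda\to 0$).

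Next I would apply the uniform ergodic theorem for $C(K)$-valued stationary ergodic sequences (e.g.\ the strong law in separable Banach spaces) to obtain $\|L_T-L\|_K\xrightarrow{a.s.}0$: the pair $\{(y_t,\tilde\lambda_t)\}$ is stationary and ergodic by Theorem \ref{th1} and Proposition \ref{invertibility}, the map $\pmb\kappa\mapsto l_t(\pmb\kappa)$ is continuous on the compact $K$ by Assumption \ref{a1}, and the only remaining requirement is the integrability bound $\mathbb{E}\|l_t\|_K<\infty$. For the identification of the limit, I would invoke the information inequality conditionally: writing $\mathbb{E}[l_t(\pmb\kappa_0)-l_t(\pmb\kappa)]=\mathbb{E}\big[\mathrm{KL}\big(p(\cdot|\lambda_t,\pmb\xi_0)\,\|\,p(\cdot|\tilde\lambda_t(\pmb\theta),\pmb\xi)\big)\big]\ge 0$, with equality if and only if the two conditional BNB laws coincide with probability one. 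Injectivity of the BNB parametrization in $(\lambda,r,\alpha)$ then forces $\pmb\xi=\pmb\xi_0$ together with $\tilde\lambda_t(\pmb\theta)=\lambda_t=\tilde\lambda_t(\pmb\theta_0)$ a.s., and Assumption \ref{a4} upgrades the latter to $\pmb\theta=\pmb\theta_0$; hence $L$ is uniquely maximized at $\pmb\kappa_0$.

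I expect the main obstacle to be verifying the two moment requirements above — the Lipschitz envelope $K(y)$ in the first step and $\mathbb{E}\|l_t\|_K<\infty$ in the second — since Theorem \ref{th1} only guarantees a finite first moment $\mathbb{E}(y_t)<\infty$. The saving feature is that the BNB log-pmf grows only logarithmically in $y$ (its right tail is of polynomial order $\alpha+1$), so that both $|l_t|$ and its sensitivity in $\lambda$ are dominated by affine functions of $y_t$ and of $\|\tilde\lambda_t\|_\Theta$, whose first moments are finite by Theorem \ref{th1} and Proposition \ref{invertibility}. Once these integrability bounds are secured, uniform convergence on the compact $K$ together with the unique, well-separated maximizer delivers $\hat{\pmb\kappa}_T\xrightarrow{a.s.}\pmb\kappa_0$.
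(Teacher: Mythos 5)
Your proposal follows essentially the same route as the paper's proof: the same triangle-inequality decomposition of $\|\hat L_T-L\|_K$ into an initialization term (handled by a mean-value bound in $\lambda$ over $[\bar c,\infty)$ plus the e.a.s.\ filter convergence of Proposition \ref{invertibility}) and an ergodic term (handled by a uniform ergodic theorem once $\mathbb{E}\|l_t\|_K<\infty$ is verified via the logarithmic growth of the BNB log-pmf), followed by the same Kullback--Leibler identification argument combining injectivity of the BNB parametrization with Assumption \ref{a4}. The only cosmetic difference is that the paper's Lemma on the log-pmf derivatives shows the Lipschitz envelope in $\lambda$ is actually a constant rather than a function $K(y)$, which only makes the first step easier.
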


\subsection{Asymptotic normality} 

We now focus on deriving the asymptotic normality of the ML estimator. First, we require the data generating process to have a finite second moment. A finite second moment  is needed to ensure the existence of some moments for the derivatives of the log-likelihood, which are used to apply a central limit theorem to the score of the log-likelihood function.
\begin{assumption}[weak stationarity]\label{a5.0}
The assumptions of Theorem \ref{th2} are satisfied for $\pmb \kappa = \pmb{\kappa}_0$.
\end{assumption}
The next assumption is needed to ensure that the Fisher information matrix is positive definite.
\begin{assumption}[positive definite Fisher information]\label{a5}
The random variables of the vector $\frac{\partial g_{\pmb{\theta}}}{\partial {\pmb{\theta}}}(y_t,\lambda_t)|_{{\pmb{\theta}}={\pmb{\theta}}_0}$ are linearly independent.
\end{assumption}

Finally, the next assumption requires some regularity conditions on the updating function $g_{\pmb\theta}$. In particular, we impose the updating function to be twice continuously differentiable and have some of its derivatives bounded by some linear functions of their arguments. Note that $\|\cdot\|$ denotes the $L_1$-norm when applied to a vector and the  operator norm induced by the  $L_1$-norm when applied to a matrix. Furthermore, we consider the following shorthand notation for the derivatives of the updating function: $g_{\pmb{\theta}}^{d}(y,\lambda)=\partial g_{\pmb{\theta}}(y,\lambda)/ \partial d$. If $d$ is a 2-dimensional vector, then $g_{\pmb{\theta}}^{d}(y,\lambda)$ denotes the second order partial derivative with respect to the elements of the vector. For instance, if $d=(\pmb \theta, \pmb \theta)$, then $g_{\pmb{\theta}}^{\pmb \theta  \pmb \theta}(y,\lambda)=\partial^2 g_{\pmb{\theta}}(y,\lambda)/ \partial \pmb \theta \partial \pmb \theta^\top$.

\begin{assumption}\label{a6}
The function $(\pmb{\theta}, \lambda) \mapsto g_{\pmb{\theta}}(y,\lambda)$ is twice continuously differentiable with respect to both $\pmb{\theta}$ and $\lambda$.  Furthermore,  for any ${\pmb{\theta}} \in \Theta$
$$\|g_{\pmb{\theta}}^{d}(y,\lambda)-g_{\pmb{\theta}}^{d}(y^*,\lambda^*)\| \le b_1|y-y^*|+b_2|\lambda-\lambda^*|,$$
for $d\in\{\lambda,{\pmb{\theta}},(\lambda,\lambda),({\pmb{\theta}},\lambda),({\pmb{\theta}},{\pmb{\theta}})\}$ and some positive constants  $b_1$ and $b_2$. 
\end{assumption}

The next result delivers the asymptotic normality of the ML estimator.

\begin{theorem}[asymptotic normality]\label{th4}
Let Assumptions   \ref{a1}-\ref{a6} hold and ${\pmb{\kappa}}_0\in \text{int}(K)$, then the ML estimator defined in (\ref{MLE}) has the following asymptotic distribution
$$\sqrt{T}(\hat {\pmb{\kappa}}_T-{\pmb{\kappa}}_0)\xrightarrow{d}N(\pmb{0},{\pmb{\text F}}_0^{-1}), \quad \text{as}  \quad  T\rightarrow \infty,$$
where ${{\pmb{\text F}}}_0=-\mathbb{E}\left[\frac{\partial^2l_t(\pmb{\kappa}_0)}{\partial {\pmb{\kappa}}\partial {\pmb{\kappa}}^\top}\right]$.
\end{theorem}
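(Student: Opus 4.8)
The plan is to follow the standard route for asymptotic normality of M-estimators in observation-driven models: a mean-value expansion of the score, a martingale central limit theorem for the score, and a uniform law of large numbers for the Hessian. Since Theorem \ref{th3} already gives $\hat{\pmb\kappa}_T \xrightarrow{a.s.} \pmb\kappa_0$ and $\pmb\kappa_0 \in \text{int}(K)$, eventually the estimator lies in the interior of $K$ and satisfies the first-order condition $\partial \hat L_T(\hat{\pmb\kappa}_T)/\partial\pmb\kappa = \pmb 0$. Applying the mean value theorem to each component of the score around $\pmb\kappa_0$ yields
\begin{align*}
\sqrt{T}(\hat{\pmb\kappa}_T - \pmb\kappa_0) = -\left[\frac{\partial^2 \hat L_T(\bar{\pmb\kappa}_T)}{\partial\pmb\kappa\,\partial\pmb\kappa^\top}\right]^{-1} \sqrt{T}\,\frac{\partial \hat L_T(\pmb\kappa_0)}{\partial\pmb\kappa},
\end{align*}
with $\bar{\pmb\kappa}_T$ on the segment between $\hat{\pmb\kappa}_T$ and $\pmb\kappa_0$, so that $\bar{\pmb\kappa}_T \xrightarrow{a.s.} \pmb\kappa_0$. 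It then suffices to (a) show $\sqrt{T}\,\partial\hat L_T(\pmb\kappa_0)/\partial\pmb\kappa \xrightarrow{d} N(\pmb 0,\pmb{\text F}_0)$, (b) show $\partial^2\hat L_T(\bar{\pmb\kappa}_T)/\partial\pmb\kappa\partial\pmb\kappa^\top \xrightarrow{a.s.} -\pmb{\text F}_0$ with $\pmb{\text F}_0$ nonsingular, and (c) conclude by Slutsky's theorem, using the information matrix equality $\pmb{\text F}_0 = \mathbb{E}[(\partial l_t(\pmb\kappa_0)/\partial\pmb\kappa)(\partial l_t(\pmb\kappa_0)/\partial\pmb\kappa)^\top]$, which makes the sandwich collapse to $N(\pmb 0, \pmb{\text F}_0^{-1})$.

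A preliminary step underpinning both (a) and (b) is to extend Proposition \ref{invertibility} to the first and second derivatives of the filter. Differentiating the recursion \eqref{filter} shows that $\partial\hat\lambda_t/\partial\pmb\theta$ solves a linear SRE with multiplicative coefficient $g_{\pmb\theta}^\lambda(y_t,\hat\lambda_t)$, bounded in modulus by $c_2<1$ thanks to \eqref{contraction}, and additive term $g_{\pmb\theta}^{\pmb\theta}(y_t,\hat\lambda_t)$; an analogous SRE holds for the second derivatives. Using the Lipschitz bounds and twice continuous differentiability of Assumption \ref{a6}, together with the moment bound $\mathbb{E}\|\tilde\lambda_t\|_\Theta<\infty$ from Proposition \ref{invertibility}, I would establish by the same e.a.s.\ contraction argument that the derivative filters converge e.a.s.\ and uniformly over $\Theta$ to stationary ergodic limits, and---crucially for (a)---that $\mathbb{E}\|\partial\tilde\lambda_t/\partial\pmb\theta\|^2<\infty$. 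It is here that the finite second moment of $y_t$ from Assumption \ref{a5.0} (equivalently Theorem \ref{th2}) enters: the additive term of the derivative SRE grows at most linearly in $y_t$ by Assumption \ref{a6}, so its square is integrable precisely when $\mathbb{E}(y_t^2)<\infty$.

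For (a), I would work first with the stationary contributions $l_t(\pmb\kappa) = \log p(y_t|\tilde\lambda_t(\pmb\theta),r,\alpha)$. Because the model is correctly specified, $\{\partial l_t(\pmb\kappa_0)/\partial\pmb\kappa\}$ is a stationary ergodic martingale difference sequence with respect to $\{\mathcal{F}_{t-1}\}$, since $\mathbb{E}[\partial l_t(\pmb\kappa_0)/\partial\pmb\kappa\mid\mathcal{F}_{t-1}]=\pmb 0$. Writing the score by the chain rule as the direct partials of $\log p$ in $(r,\alpha)$ plus $(\partial\log p/\partial\lambda)(\partial\tilde\lambda_t/\partial\pmb\theta)$, I would bound the partials of the BNB log-pmf, which are combinations of digamma functions of $y_t$ and $\tilde\lambda_t$ and hence grow at most logarithmically in $y_t$ (the $\lambda$-partial being even bounded, reflecting the robustness of the score); combined with $\mathbb{E}\|\partial\tilde\lambda_t/\partial\pmb\theta\|^2<\infty$ and $\mathbb{E}(\tilde\lambda_t^2)<\infty$ this yields a finite score variance $\pmb{\text F}_0$. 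The martingale central limit theorem then gives $\sqrt{T}\,\partial L_T(\pmb\kappa_0)/\partial\pmb\kappa \xrightarrow{d} N(\pmb 0,\pmb{\text F}_0)$, and the e.a.s.\ convergence of the filter derivatives makes $\tfrac{1}{\sqrt T}\sum_{t=1}^T\|\partial\hat l_t(\pmb\kappa_0)/\partial\pmb\kappa - \partial l_t(\pmb\kappa_0)/\partial\pmb\kappa\|\xrightarrow{a.s.}0$ (e.a.s.\ terms are a.s.\ summable), transferring the limit to the feasible score. For (b), I would establish $\mathbb{E}\sup_{\pmb\kappa\in N_0}\|\partial^2 l_t(\pmb\kappa)/\partial\pmb\kappa\partial\pmb\kappa^\top\|<\infty$ on a neighborhood $N_0$ of $\pmb\kappa_0$ (again via Assumptions \ref{a5.0} and \ref{a6}), apply a uniform ergodic theorem so that $\partial^2 L_T/\partial\pmb\kappa\partial\pmb\kappa^\top$ converges uniformly to the continuous limit $\mathbb{E}[\partial^2 l_t/\partial\pmb\kappa\partial\pmb\kappa^\top]$, use e.a.s.\ convergence to replace $L_T$ by $\hat L_T$, and evaluate at $\bar{\pmb\kappa}_T\to\pmb\kappa_0$ to obtain $-\pmb{\text F}_0$. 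Nonsingularity of $\pmb{\text F}_0$ follows from Assumption \ref{a5}: if $\mathbf a^\top\partial l_t(\pmb\kappa_0)/\partial\pmb\kappa=0$ a.s., the linear independence of the components of $\partial g_{\pmb\theta}/\partial\pmb\theta$ forces $\mathbf a=\pmb 0$.

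The main obstacle I anticipate is this preliminary moment control rather than the CLT/ULLN machinery, which is routine once the ingredients are in place. Specifically, the delicate part is bounding the derivatives of the BNB log-pmf---which involve digamma and trigamma functions evaluated at arguments growing with $y_t$---and propagating finite second (for the score) and first (for the Hessian) moments through their products with the derivative filters, using only $\mathbb{E}(y_t^2)<\infty$. Verifying that the logarithmic growth of these special-function terms, together with the at-most-linear growth encoded in Assumption \ref{a6}, keeps everything integrable is where the argument is most technical; the e.a.s.\ extension of invertibility to the second derivatives of the filter is the other nontrivial bookkeeping step.
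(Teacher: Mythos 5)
Your proposal is correct and follows essentially the same route as the paper: e.a.s.\ invertibility of the first and second derivative filters with the moment bounds $\mathbb{E}\|\tilde\lambda_t'\|_\Theta^2<\infty$ and $\mathbb{E}\|\tilde\lambda_t''\|_\Theta<\infty$ (where Assumption \ref{a5.0} enters exactly as you say), a martingale-difference CLT for the score, a uniform ergodic theorem for the Hessian, nonsingularity of $\pmb{\text F}_0$ via Assumption \ref{a5}, and a $\sqrt{T}$-rate transfer from the limit filter to the feasible one. The only (cosmetic) difference is organizational: the paper first derives normality for the infeasible maximizer $\tilde{\pmb\kappa}_T$ of $L_T$ and then shows $\sqrt{T}(\hat{\pmb\kappa}_T-\tilde{\pmb\kappa}_T)\xrightarrow{a.s.}0$ via $\sqrt{T}\,\|\hat L_T'-L_T'\|_K\xrightarrow{a.s.}0$, whereas you expand the feasible likelihood directly around $\pmb\kappa_0$; both require the same lemmas, and your remark that the BNB log-pmf derivatives grow logarithmically is a harmless understatement of the linear bounds the paper actually proves (Lemma \ref{lemma100}), which are still square-integrable under $\mathbb{E}(y_t^2)<\infty$.
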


In practice, the Fisher information matrix ${\pmb{\text F}}_0$ needs to be estimated. A consistent estimator is obtained by plugging in the ML estimator into the second derivative of the log-likelihood 
\begin{align*}
\hat{\pmb{\text F}}_T=-\frac{1}{T} \sum_{t=1}^T \frac{\partial^2 \hat l_t(\hat{\pmb{\kappa}}_T)}{\partial {\pmb{\kappa}}\partial {\pmb{\kappa}}^\top}.
\end{align*}
The next result shows that the estimator given above delivers a strongly consistent estimate for the asymptotic covariance matrix of the ML estimator.
\begin{proposition}\label{pr2}
Let the assumptions of Theorem \ref{th4} hold, then  the estimator of the asymptotic covariance matrix is strongly consistent, that is,
\begin{align*}
{\hat{\pmb{\text F}}^{-1}}_{T}\xrightarrow{a.s.} {\pmb{\text F}}_0^{-1}, \quad \text{as} \quad T\rightarrow \infty.
\end{align*}
\end{proposition}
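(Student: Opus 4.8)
The plan is to prove the statement in two moves: first establish that the empirical Hessian converges almost surely, $\hat{\pmb{\text F}}_T\xrightarrow{a.s.}\pmb{\text F}_0$, and then transfer this to the inverses by continuity. Since the map $\pmb A\mapsto \pmb A^{-1}$ is continuous on the open set of invertible matrices, and $\pmb{\text F}_0$ is positive definite—hence invertible—under Assumption \ref{a5} (which is in force as part of the hypotheses of Theorem \ref{th4}), an application of the continuous mapping theorem delivers $\hat{\pmb{\text F}}^{-1}_T\xrightarrow{a.s.}\pmb{\text F}_0^{-1}$ as soon as $\hat{\pmb{\text F}}_T\xrightarrow{a.s.}\pmb{\text F}_0$. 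The whole proof therefore reduces to the almost sure convergence of the empirical Hessian evaluated at the estimator.

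To obtain $\hat{\pmb{\text F}}_T\xrightarrow{a.s.}\pmb{\text F}_0$ I would assemble three ingredients, most of which are already developed in the proof of Theorem \ref{th4}. The first is a uniform law of large numbers for the stationary limit object: the map $\pmb\kappa\mapsto \partial^2 l_t(\pmb\kappa)/\partial\pmb\kappa\partial\pmb\kappa^\top$, built from the stationary and ergodic limit filter $\tilde\lambda_t$ of Proposition \ref{invertibility}, is itself stationary and ergodic, continuous in $\pmb\kappa$ by Assumption \ref{a6}, and dominated by an integrable envelope, so that $\tfrac{1}{T}\sum_{t=1}^T \partial^2 l_t(\pmb\kappa)/\partial\pmb\kappa\partial\pmb\kappa^\top$ converges uniformly over the compact set $K$, almost surely, to $\mathbb{E}[\partial^2 l_t(\pmb\kappa)/\partial\pmb\kappa\partial\pmb\kappa^\top]$, whose value at $\pmb\kappa_0$ equals $-\pmb{\text F}_0$. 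The second is the filter-approximation step: replacing $\tilde\lambda_t$ (and its first and second $\pmb\theta$-derivatives) by the initialized filter $\hat\lambda_t$ perturbs the Hessian by a remainder that vanishes uniformly over $K$, which follows from the e.a.s.\ convergence $\|\hat\lambda_t-\tilde\lambda_t\|_\Theta\xrightarrow{e.a.s.}0$ of Proposition \ref{invertibility} together with the analogous e.a.s.\ convergence of the filter derivatives obtained under Assumption \ref{a6}. Combining the two yields the uniform statement $\sup_{\pmb\kappa\in K}\|\tfrac{1}{T}\sum_{t=1}^T \partial^2\hat l_t(\pmb\kappa)/\partial\pmb\kappa\partial\pmb\kappa^\top-\mathbb{E}[\partial^2 l_t(\pmb\kappa)/\partial\pmb\kappa\partial\pmb\kappa^\top]\|\xrightarrow{a.s.}0$.

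The third ingredient lets me pass to the estimator. Uniform almost sure convergence to a limit that is continuous in $\pmb\kappa$, combined with the strong consistency $\hat{\pmb\kappa}_T\xrightarrow{a.s.}\pmb\kappa_0$ from Theorem \ref{th3}, implies convergence of the empirical Hessian evaluated at $\hat{\pmb\kappa}_T$, namely $-\tfrac{1}{T}\sum_{t=1}^T \partial^2\hat l_t(\hat{\pmb\kappa}_T)/\partial\pmb\kappa\partial\pmb\kappa^\top\xrightarrow{a.s.}\pmb{\text F}_0$; this is exactly $\hat{\pmb{\text F}}_T\xrightarrow{a.s.}\pmb{\text F}_0$. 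Inverting via the continuity argument of the first paragraph then completes the proof.

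I expect the main obstacle to be the construction of the integrable envelope underlying the uniform law of large numbers. The second derivatives of the BNB conditional log-pmf involve digamma and trigamma functions evaluated at arguments that are linear in $y_t$ and in $\lambda$; these grow at most logarithmically in $y_t$, and when composed with the filter derivatives—whose growth is controlled by the Lipschitz bounds of Assumption \ref{a6} and by the uniform moment bound $\mathbb{E}\|\tilde\lambda_t\|_\Theta<\infty$ of Proposition \ref{invertibility}—the finiteness of $\mathbb{E}(y_t^2)$ guaranteed by Assumption \ref{a5.0} (via Theorem \ref{th2}) is what secures integrability of the dominating function uniformly over $K$. Since essentially the same moment controls are required to apply the central limit theorem to the score in Theorem \ref{th4}, these estimates can largely be reused, so the incremental technical work specific to this proposition is modest.
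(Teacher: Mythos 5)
Your proposal is correct and follows essentially the route the paper intends: the paper does not spell out a separate proof of this proposition, but the argument is exactly the one you assemble from its Theorem \ref{th4} machinery --- the uniform moment bound $\mathbb{E}\|l_t''\|_K<\infty$ of Lemma \ref{lemma_n2} feeding the ergodic theorem of \cite{rao1962} for uniform convergence of the Hessian over $K$, the e.a.s.\ convergence of $\hat\lambda_t,\hat\lambda_t',\hat\lambda_t''$ (Proposition \ref{invertibility}, Lemmas \ref{lemma_n1.1}--\ref{lemma_n1.2}) to handle the initialized filter, strong consistency of $\hat{\pmb\kappa}_T$, and positive definiteness of $\pmb{\text F}_0$ (Lemma \ref{lemma_n3}) to invert by continuity. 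The only minor inaccuracy is your remark that the second derivatives of the log-pmf grow logarithmically in $y_t$; Lemma \ref{lemma100} bounds them by linear functions of $y_t$ and $\lambda$, which is why the finite second moment is needed, as you correctly conclude.
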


\section{BNB-INGARCH model} 
\label{section3}
\subsection{The model} 
An intuitive and simple way to specify the conditional mean $\lambda_t$ is to consider a linear autoregressive process driven by past observations.  Count processes with a linear specification  of the conditional mean are often referred in the literature as integer-valued generalized autoregressive conditional heteroscedastic (INGARCH) models \citep{ferland2006integer}. We define the BNB-INGARCH model through the following equations
\begin{align}
y_t|\mathcal{F}_{t-1} \sim \mathcal{BNB}(\lambda_t,r,\alpha),\quad \lambda_{t+1} = \omega  + \phi  \lambda_{t} + \tau y_t,
\label{linear_bnb}
\end{align}
where $\omega>0$, $\phi\ge0$ and $\tau>0$ are static parameters to be estimated. These parameters are restricted  to be positive to guarantee that $\lambda_t$ is strictly positive with probability one.  A linear autoregressive specification for the conditional expectation $\lambda_t$ is often considered for  Poisson and  negative binomial autoregressions, see \cite{ferland2006integer}, \cite{fokianos2009poisson} and  \cite{zhu2011negative}. We refer to these models as the Po-INGARCH   and NB-INGARCH models, respectively. We can immediately see that the BNB-INGARCH model  can approximate arbitrarily well both the Po-INGARCH   and the NB-INGARCH model since the BNB distribution converges to the negative binomial as $\alpha\rightarrow\infty$ and to the Poisson as, additionally, $r\rightarrow\infty$. 

The next result relies on Theorem \ref{th1} and \ref{th2} to derive conditions for strict and weak stationarity of the BNB-INGARCH process.
\begin{theorem}\label{p1}
Let the BNB-INGARCH process in  (\ref{linear_bnb}) satisfy
\begin{align}\label{cs1}
\tau +\phi<1.
\end{align}
Then, the process admits a strictly stationary and ergodic solution with a finite first moment $\mathbb{E}(y_t)<\infty$. 
Additionally,  let $\alpha>2$ and 
\begin{align}\label{cs2}
 \frac{(r+1)(\alpha-1)}{r(\alpha-2)} \tau^2 +\phi^2+2\tau \phi<1.
\end{align}
Then, the stationary solution has a finite second moment $\mathbb{E}(y_t^2)<\infty$. Hence, it is weakly stationary.
\end{theorem}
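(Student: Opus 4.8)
The plan is to recognize that Theorem \ref{p1} is nothing more than a direct specialization of the general stationarity results in Theorems \ref{th1} and \ref{th2} to the affine updating function $g_{\pmb\theta}(y,\lambda)=\omega+\phi\lambda+\tau y$, where $\pmb\theta=(\omega,\phi,\tau)^\top$. The first step I would take is to verify that this $g$ is a legitimate updating map, i.e.~that it sends $\mathbb{N}\times\mathbb{R}^+$ into $\mathbb{R}^+$. Since $\omega>0$, $\phi\ge0$, $\tau>0$ and both arguments are nonnegative, we have $g_{\pmb\theta}(y,\lambda)\ge\omega>0$, so that $\lambda_t$ remains strictly positive and the model in (\ref{linear_bnb}) is well defined.

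The second step is to establish the contraction condition (\ref{contraction}). Because $g$ is affine, the triangle inequality gives
\begin{align*}
|g_{\pmb\theta}(y,\lambda)-g_{\pmb\theta}(y^*,\lambda^*)|=|\phi(\lambda-\lambda^*)+\tau(y-y^*)|\le \tau|y-y^*|+\phi|\lambda-\lambda^*|,
\end{align*}
so that (\ref{contraction}) holds with the explicit Lipschitz coefficients $c_1=\tau$ and $c_2=\phi$. Under condition (\ref{cs1}) the sum satisfies $c_1+c_2=\tau+\phi<1$, hence Theorem \ref{th1} applies and yields a unique strictly stationary and ergodic causal solution with $\mathbb{E}(y_t)<\infty$, along with the $\mathcal{F}_{t-1}$-measurability of $\lambda_t$. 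This settles the first claim.

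For the second claim I would simply substitute $c_1=\tau$ and $c_2=\phi$ into the strengthened requirement of Theorem \ref{th2}. With $\alpha>2$ and
\begin{align*}
\frac{(r+1)(\alpha-1)}{r(\alpha-2)}\tau^2+\phi^2+2\tau\phi<1,
\end{align*}
which is precisely condition (\ref{cs2}), all hypotheses of Theorem \ref{th2} are met, and the theorem delivers $\mathbb{E}(y_t^2)<\infty$ and therefore weak stationarity of $\{y_t\}_{t\in\mathbb{Z}}$.

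I do not anticipate any genuine obstacle: the affine structure makes the Lipschitz bound an exact identity rather than a mere estimate, so conditions (\ref{cs1}) and (\ref{cs2}) translate one-to-one into the abstract contraction requirements of the general theorems. The only point worth a brief remark is the boundary case $\phi=0$, where the coefficient $c_2$ vanishes. This is harmless, since the Lipschitz inequality continues to hold with any arbitrarily small positive $c_2$, while $\tau+\phi=\tau<1$ still ensures $c_1+c_2<1$; a zero sensitivity in the $\lambda$ argument causes no difficulty for either underlying proof.
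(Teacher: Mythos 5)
Your proposal is correct and follows exactly the same route as the paper: the paper's proof is a two-line observation that the affine updating function satisfies the contraction condition of Theorem \ref{th1} with $c_1=\tau$, $c_2=\phi$ under (\ref{cs1}), and that (\ref{cs2}) is precisely the strengthened condition of Theorem \ref{th2}. Your additional remarks (well-definedness of the updating map and the boundary case $\phi=0$) are harmless elaborations of details the paper leaves implicit.
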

Finally, we derive the strong consistency and asymptotic normality of the ML estimator of the BNB-INGARCH model by appealing to Theorem \ref{th3} and \ref{th4}.
\begin{theorem}\label{th:ML}
Let the observed series $\{y_t\}_{t=1}^T$ be generated by the  BNB-INGARCH process in (\ref{linear_bnb}) with parameter value $\pmb \kappa_0=(r_0,\alpha_0,\omega_0,\phi_0,\tau_0)^\top$. Furthermore, let $\pmb \kappa_0 \in K$ where,  $K$ is compact parameter set  such that $\phi+\tau<1$ and $\omega>0$ for any $\pmb \kappa \in K$. Then,  the  ML estimator defined in  (\ref{MLE})  is strongly consistent
$$\hat {\pmb{\kappa}}_T\xrightarrow{a.s.}{\pmb{\kappa}}_0, \quad \text{as}  \quad  T\rightarrow \infty.$$
Assume further that $\pmb \kappa_0$ satisfies the contraction condition in (\ref{cs2}), $\alpha_0>2$, and $\pmb \kappa_0 \in int(K)$. Then, the ML estimator is asymptotically normally distributed
$$\sqrt{T}(\hat {\pmb{\kappa}}_T-{\pmb{\kappa}}_0)\xrightarrow{d}N(\pmb{0},{\pmb{\text F}}_0^{-1}), \quad \text{as}  \quad  T\rightarrow \infty,$$
where ${{\pmb{\text F}}}_0=-\mathbb{E}\left[\frac{\partial^2l_t(\pmb{\kappa}_0)}{\partial {\pmb{\kappa}}\partial {\pmb{\kappa}}^\top}\right]$.
\end{theorem}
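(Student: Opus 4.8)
The plan is to recognize the BNB-INGARCH model as the special case of the general autoregression (\ref{bnb}) with updating function $g_{\pmb{\theta}}(y,\lambda)=\omega+\phi\lambda+\tau y$ and parameter $\pmb{\theta}=(\omega,\phi,\tau)^\top$, and then to show that all hypotheses of Theorem \ref{th3} (consistency) and Theorem \ref{th4} (asymptotic normality) are implied by the stated conditions on $K$ and on $\pmb{\kappa}_0$, so that those two theorems apply directly. For the linear map, the contraction (\ref{contraction}) holds with $c_1=\tau$ and $c_2=\phi$, since $|g_{\pmb{\theta}}(y,\lambda)-g_{\pmb{\theta}}(y^*,\lambda^*)|=|\phi(\lambda-\lambda^*)+\tau(y-y^*)|\le\tau|y-y^*|+\phi|\lambda-\lambda^*|$; hence $c_1+c_2=\phi+\tau$, and the moment conditions of Theorem \ref{p1} reduce to exactly (\ref{cs1}) and (\ref{cs2}). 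In particular, the existence of a stationary ergodic solution with $\mathbb{E}(y_t)<\infty$ under $\phi_0+\tau_0<1$, and a finite second moment under (\ref{cs2}) with $\alpha_0>2$, are already supplied by Theorem \ref{p1}.

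For consistency I would verify Assumptions \ref{a1}--\ref{a4}. Assumption \ref{a1} is immediate because $(\pmb{\theta},\lambda)\mapsto\omega+\phi\lambda+\tau y$ is jointly continuous. Assumption \ref{a2} follows from the display above together with compactness of $K$: since $\phi+\tau<1$ for every $\pmb{\kappa}\in K$ and $\phi+\tau$ is continuous, $\sup_{\pmb{\kappa}\in K}(\phi+\tau)<1$, so the contraction holds uniformly on $\Theta$, while $\pmb{\theta}_0\in\Theta$ by hypothesis. Assumption \ref{a3} holds with $\bar c=\inf_{\pmb{\kappa}\in K}\omega>0$ (strictly positive because $K$ is compact and $\omega>0$ throughout $K$), since $g_{\pmb{\theta}}(y,\lambda)=\omega+\phi\lambda+\tau y\ge\omega$ as $\phi\ge0,\tau>0,\lambda,y\ge0$.

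The delicate point, and the step I expect to be the main obstacle, is identification (Assumption \ref{a4}). Taken literally at a \emph{fixed} $\lambda$ it fails for the linear model: $\omega_1+\phi_1\lambda+\tau_1 y=\omega_2+\phi_2\lambda+\tau_2 y$ for all $y\in\mathbb{N}$ forces only $\tau_1=\tau_2$ and $\omega_1+\phi_1\lambda=\omega_2+\phi_2\lambda$, which does not separate $\omega$ from $\phi$. I would therefore supply the identification directly at the level of the limiting criterion, reusing the ergodicity and uniform-convergence machinery behind Theorem \ref{th3} but replacing its identification step. Concretely, equality of the limit log-likelihoods forces the conditional laws to coincide a.s.; since the $\mathcal{BNB}(\lambda,r,\alpha)$ family is identified by $(\lambda,r,\alpha)$, this yields $r=r_0$, $\alpha=\alpha_0$ and $\tilde\lambda_t(\pmb{\theta})=\lambda_t$ a.s. Inserting this into the recursion gives $(\omega-\omega_0)+(\phi-\phi_0)\lambda_t+(\tau-\tau_0)y_t=0$ a.s.~for all $t$. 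Conditioning on $\mathcal{F}_{t-1}$ and using that $y_t\mid\mathcal{F}_{t-1}$ is non-degenerate (the BNB conditional variance is positive) gives $\tau=\tau_0$, after which $(\omega-\omega_0)+(\phi-\phi_0)\lambda_t=0$ together with the non-degeneracy of $\lambda_t$ (which holds since $\tau_0>0$ and $y_t$ is non-degenerate) yields $\omega=\omega_0$ and $\phi=\phi_0$. Consistency then follows from Theorem \ref{th3}.

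For asymptotic normality I would check Assumptions \ref{a5.0}, \ref{a5} and \ref{a6}. Assumption \ref{a5.0} is precisely the hypothesis that (\ref{cs2}) and $\alpha_0>2$ hold at $\pmb{\kappa}_0$. Assumption \ref{a6} is routine since every derivative of the linear map is constant or affine: $g_{\pmb{\theta}}^{\lambda}=\phi$, $g_{\pmb{\theta}}^{\pmb{\theta}}=(1,\lambda,y)^\top$, $g_{\pmb{\theta}}^{\lambda\lambda}=0$, $g_{\pmb{\theta}}^{\pmb{\theta}\lambda}=(0,1,0)^\top$ and $g_{\pmb{\theta}}^{\pmb{\theta}\pmb{\theta}}=\pmb{0}$, each globally Lipschitz with $b_1=b_2=1$ sufficing, and twice continuous differentiability is obvious. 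For Assumption \ref{a5} the relevant vector is $\partial g_{\pmb{\theta}}/\partial\pmb{\theta}(y_t,\lambda_t)|_{\pmb{\theta}=\pmb{\theta}_0}=(1,\lambda_t,y_t)^\top$, whose linear independence follows from the same non-degeneracy facts used for identification: any a.s.~relation $a_0+a_1\lambda_t+a_2 y_t=0$ forces $a_2=0$ by the positive conditional variance of $y_t$, and then $a_0=a_1=0$ by the non-degeneracy of $\lambda_t$. With \ref{a1}--\ref{a6} verified and $\pmb{\kappa}_0\in\mathrm{int}(K)$, Theorem \ref{th4} delivers the stated asymptotic normality with information ${\pmb{\text F}}_0$.
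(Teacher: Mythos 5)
Your verification of Assumptions \ref{a1}, \ref{a2}, \ref{a3}, \ref{a5.0}, \ref{a5} and \ref{a6} matches the paper's proof, which proceeds exactly as you do: plug the linear map into Theorems \ref{th3} and \ref{th4} (with Theorem \ref{p1} supplying stationarity and moments) and check the hypotheses one by one. The genuine divergence is in the identification step. The paper simply asserts that Assumption \ref{a4} is ``trivially satisfied by the functional form of the updating function,'' whereas you observe that, read literally at a fixed $\lambda$ --- which is how Assumption \ref{a4} is actually invoked inside the proof of Theorem \ref{th3}, where $\lambda$ is the common value $\tilde\lambda_{t-1}(\pmb\theta_1)=\tilde\lambda_{t-1}(\pmb\theta_2)$ --- the condition fails for $g_{\pmb\theta}(y,\lambda)=\omega+\phi\lambda+\tau y$, since equality for all $y$ at one $\lambda$ only pins down $\tau$ and the combination $\omega+\phi\lambda$. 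Your substitute argument (pass to equality of the limit criteria, deduce $r=r_0$, $\alpha=\alpha_0$, $\tilde\lambda_t(\pmb\theta)=\lambda_t$ a.s., subtract the two recursions to get $(\omega-\omega_0)+(\phi-\phi_0)\lambda_t+(\tau-\tau_0)y_t=0$ a.s., and then peel off $\tau$, $\phi$, $\omega$ using the conditional non-degeneracy of $y_t$ and the resulting non-degeneracy of $\lambda_t$) is correct and is essentially a model-specific rerun of part (b) of the proof of Theorem \ref{th3} with the problematic fixed-$\lambda$ step replaced by an argument that exploits randomness in both $y_t$ and $\lambda_t$. What your route buys is rigor at the one point where the paper's verification is glib; what the paper's route buys is brevity, at the cost of relying on a reading of Assumption \ref{a4} under which its own general identification argument would not go through. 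The remaining small gap in both treatments --- that Assumption \ref{a2} as used in Proposition \ref{invertibility} needs a \emph{uniform} $c_2<1$ over $\Theta$ --- is harmless here, since $\sup_K\phi<1$ and $\sup_K\tau<1$ follow from compactness and $\phi+\tau<1$, $\tau>0$ on $K$.
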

The proof of Theorem \ref{th:ML}, which is given in the Appendix,  is obtained by checking that Assumptions \ref{a1}-\ref{a6} are satisfied. The small sample properties of the ML estimator are studied in a Monte Carlo experiment in the next section.


\subsection{Monte Carlo simulation study} 
We investigate the small-sample properties of the ML estimator by means of a Monte Carlo simulation experiment. 
We generate samples of different sizes from the BNB-INGARCH model in (\ref{linear_bnb}) for several parameter values. The parameters are then estimated by ML. Table \ref{Tab1} reports the results of the experiment. The     parameters    $r$ and $\alpha$  are reparameterized in terms of their inverse.  This is done because, especially in small samples, a given realized path from the BNB-INGARCH process may not present outliers and   the estimate of $\alpha$ may become arbitrarily large since the likelihood function is flat for large   $\alpha$. 
\begin{table}[h!]

\centering

\caption{{\small Simulation results  for the ML estimator obtained from 1,000 Monte Carlo replications. The mean, the standard deviation (SD) and  the root mean square error (RMSE) are reported for different parameter values and sample sizes. The parameter $\delta$ represents the unconditional mean of $y_t$, i.e.~$\delta=\omega/(1-\phi-\tau)$.}}

\resizebox{0.95\columnwidth}{!}{
\begin{tabular}{lrccccccccccc}
  \hline
  \hline
\vspace{-0.7cm}\\ 
 && $\delta$ & $\phi$ & $\tau$ & $r^{-1}$ & $\alpha^{-1}$ &  & $\delta$ & $\phi$ & $\tau$ & $r^{-1}$ & $\alpha^{-1}$ \\ 
  \hline
\vspace{-0.7cm}\\ 
\multicolumn{2}{c}{\textbf{True value}} & \textbf{10.00} & \textbf{0.50} &  \textbf{0.20} &  \textbf{0.10} &  \textbf{0.20} &  &  \textbf{10.00} &  \textbf{0.68} &  \textbf{0.20} &  \textbf{0.10} &  \textbf{0.20} \vspace{0.05cm}\\ 
\multirow{3}{*}{$T=250$} & Mean & 9.965 & 0.457 & 0.198 & 0.125 & 0.172 &  & 9.991 & 0.648 & 0.197 & 0.116 & 0.167\vspace{-0.15cm} \\ 
 & SD & 0.984 & 0.191 & 0.065 & 0.091 & 0.052 &  & 1.607 & 0.120 & 0.059 & 0.093 & 0.052 \vspace{-0.15cm} \\  
 & RMSE & 0.985 & 0.196 & 0.065 & 0.094 & 0.059 &  & 1.607 & 0.124 & 0.059 & 0.094 & 0.062 \vspace{0.2cm}\\ 
\multirow{3}{*}{$T=500$} & Mean & 9.960 & 0.476 & 0.199 & 0.124 & 0.183 &  & 9.984 & 0.664 & 0.199 & 0.114 & 0.182 \vspace{-0.15cm} \\ 
 & SD & 0.689 & 0.135 & 0.046 & 0.079 & 0.037 &  & 1.192 & 0.071 & 0.040 & 0.073 & 0.037\vspace{-0.15cm} \\  
 &RMSE  & 0.690 & 0.137 & 0.046 & 0.082 & 0.040 &  & 1.192 & 0.073 & 0.040 & 0.075 & 0.041 \vspace{0.2cm}\\  
\multirow{3}{*}{$T=1000$} & Mean & 9.976 & 0.492 & 0.199 & 0.118 & 0.190 &  & 9.997 & 0.672 & 0.199 & 0.113 & 0.190 \vspace{-0.15cm} \\ 
 &SD  & 0.509 & 0.091 & 0.033 & 0.065 & 0.026 &  & 0.853 & 0.048 & 0.029 & 0.062 & 0.026\vspace{-0.15cm} \\ 
&  RMSE& 0.510 & 0.091 & 0.033 & 0.067 & 0.027 &  & 0.853 & 0.048 & 0.029 & 0.063 & 0.028 \vspace{0.2cm}\\ 
\multirow{3}{*}{$T=2000$} & Mean & 9.982 & 0.497 & 0.199 & 0.113 & 0.194 &  & 10.013 & 0.677 & 0.199 & 0.108 & 0.196 \vspace{-0.15cm} \\ 
 &SD  & 0.363 & 0.060 & 0.023 & 0.053 & 0.019 &  & 0.592 & 0.033 & 0.020 & 0.044 & 0.019\vspace{-0.15cm} \\ 
 & RMSE & 0.364 & 0.060 & 0.023 & 0.055 & 0.020 &  & 0.592 & 0.033 & 0.020 & 0.044 & 0.019 \\ 
\hline
  \hline
\vspace{-0.7cm}\\ 
\multicolumn{2}{c}{\textbf{True value}} & \textbf{10.00} & \textbf{0.50} &  \textbf{0.20} &  \textbf{0.10} &  \textbf{0.10} &  &  \textbf{10.00} &  \textbf{0.68} &  \textbf{0.20} &  \textbf{0.10} &  \textbf{0.10} \vspace{0.05cm}\\ 
\multirow{3}{*}{$T=250$} & Mean & 9.986 & 0.449 & 0.198 & 0.108 & 0.072 &    & 10.042 & 0.647 & 0.197 & 0.112 & 0.073 \vspace{-0.15cm} \\  
   & SD & 0.744 & 0.193 & 0.066 & 0.079 & 0.033 &   & 1.170 & 0.119 & 0.056 & 0.083 & 0.033\vspace{-0.15cm} \\  
 &RMSE  & 0.744 & 0.199 & 0.066 & 0.080 & 0.043 &  & 1.171 & 0.123 & 0.056 & 0.084 & 0.043 \vspace{0.2cm}\\  
\multirow{3}{*}{$T=500$}& Mean & 9.971 & 0.474 & 0.198 & 0.107 & 0.081 &   & 9.963 & 0.665 & 0.197 & 0.107 & 0.081 \vspace{-0.15cm} \\ 
  & SD & 0.533 & 0.134 & 0.046 & 0.066 & 0.024 &    & 0.821 & 0.071 & 0.040 & 0.069 & 0.024 \vspace{-0.15cm} \\ 
   & RMSE & 0.534 & 0.137 & 0.046 & 0.067 & 0.031 &    & 0.822 & 0.073 & 0.040 & 0.069 & 0.031 \vspace{0.2cm}\\ 
\multirow{3}{*}{$T=1000$} & Mean & 9.994 & 0.490 & 0.198 & 0.107 & 0.087 &    & 10.012 & 0.673 & 0.198 & 0.108 & 0.088 \vspace{-0.15cm} \\ 
   & SD & 0.384 & 0.093 & 0.031 & 0.057 & 0.017 &  & 0.608 & 0.049 & 0.027 & 0.056 & 0.017 \vspace{-0.15cm} \\ 
   & RMSE & 0.384 & 0.093 & 0.031 & 0.057 & 0.021 &    & 0.608 & 0.050 & 0.027 & 0.057 & 0.021 \vspace{0.2cm}\\ 
\multirow{3}{*}{$T=2000$} & Mean & 9.999 & 0.497 & 0.199 & 0.105 & 0.092 &   & 9.994 & 0.676 & 0.199 & 0.105 & 0.093 \vspace{-0.15cm} \\ 
   & SD & 0.271 & 0.065 & 0.023 & 0.046 & 0.012 &   & 0.434 & 0.035 & 0.020 & 0.043 & 0.011 \vspace{-0.15cm} \\  
   & RMSE & 0.271 & 0.065 & 0.023 & 0.046 & 0.015 &   & 0.434 & 0.035 & 0.020 & 0.043 & 0.013 \\ 
   \hline
\end{tabular}
}
\label{Tab1}
\end{table}

The results show that the estimates of all parameters seem to be consistent since the RMSE decreases as the sample size increases.   Furthermore, we note that the  estimation bias for   $\delta$, $\delta=\omega/(1-\phi-\tau)$, and $\tau$ is negligible even for the smallest sample size considered in the experiment ($T=250$). This can be noted from the fact that the standard deviation is equivalent to the RMSE. On the other hand, the results show the presence of a small-sample bias in the estimates of $\phi$, $r$ and $\alpha$. In particular, the parameters $\phi$ and $r$ tend to be underestimated  and $\alpha$ tends to be overestimated. However, the bias of all these estimates becomes negligible for larger sample sizes ($T=1,000$ and $T=2,000$). Finally, we note that the results seem to be coherent across the different parameter values. This is true even when  $\phi=0.68$, which represents  scenarios  where the  parameters are close to the boundary of the  weak stationarity region given in   (\ref{cs2}).

\section{A score-driven BNB autoregression} 
\label{section4}

The BNB-INGARCH model presented in Section \ref{section3} accounts for extreme observations by means of the heavy tail  of the conditional BNB distribution. However, the SRE for $\lambda_t$ is not robust against outliers: an extreme value of $y_t$ can have an arbitrary large impact on $\lambda_{t+1}$. In practice, it is often desirable to have a robust SRE that attenuates the impact of extreme observations on  $\lambda_t$. We propose a robust specification that is based on the GAS framework of \cite{Creal2013} and \cite{H2013}. The conditional mean $\lambda_t$ is specified as an autoregressive process with innovation given by the score of the predictive likelihood. Score-driven models are widely used in the literature to specify robust updating functions in models with heavy-tailed distributions \citep{HL2014,opschoor2017new}.

The BNB autoregression  with  a score-driven SRE, which we label as the BNB-GAS, is specified by the following equations
\begin{align}\label{BNB-GAS}
y_t|\mathcal{F}_{t-1} \sim \mathcal{BNB}(\lambda_t,r,\alpha),\quad \log  \lambda_{t+1} = \omega  + \phi  \log \lambda_{t} + \tau s_t,
\end{align}
where $\omega \in\mathbb{R}$, $\phi \ge 0$ and $\tau>0$.
The score innovation $s_t=\partial \log p(y_t|\lambda_t,r,\alpha)/\partial \log \lambda_t$ is given by 
\begin{align}\label{score}
s_t = \gamma\lambda_t \Big(\psi(\gamma\lambda_t+y_t)+\psi(\gamma\lambda_t+\alpha)-\psi(\gamma\lambda_t +y_t +\alpha +r)-\psi(\gamma\lambda_t)\Big),
\end{align}
where $\gamma=(\alpha-1)/r$  and $\psi(\cdot)$ denotes the digamma function, i.e.~$\psi(x)=\partial \log\Gamma(x)/\partial x$. The exponential link function in (\ref{BNB-GAS}) is considered to ensure that $\lambda_t$ is strictly positive with probability one. We refer to the score   $s_t$   as the innovation of the process since $\mathbb{E}(s_t|\mathcal{F}_{t-1})=0$ with probability one.

The peculiarity of the BNB-GAS model is that the functional form of the score innovation reduces the impact of outliers. Figure \ref{score_impact} illustrates  the  sensitivity  of $s_t$ to $y_t$ for different values of the tail parameter $\alpha$. We can see that the effect of large values of $y_t$ on $s_t$ is attenuated and the degree of  attenuation depends on the tail parameter $\alpha$. The smaller the parameter $\alpha$  the more robust the score innovation $s_t$.  This  behavior of $s_t$ is quite intuitive since a small  $\alpha$ introduces heavy tails in the conditional pmf of $y_t$ and therefore it generates outliers in the observed time series, see also \cite{HL2014} for a similar interpretation in the context of Student's t-distributions.  Furthermore, as it is shown in the proof of Theorem \ref{SE_BNBINGAS} in the Appendix, the score innovation is bounded by a constant $|s_t|\le (\alpha+r+1)$. Therefore, $s_t$ is robust since it does not go to infinity as $y_t\rightarrow \infty$.

\begin{figure}[h!]
\center
\includegraphics[scale=0.65]{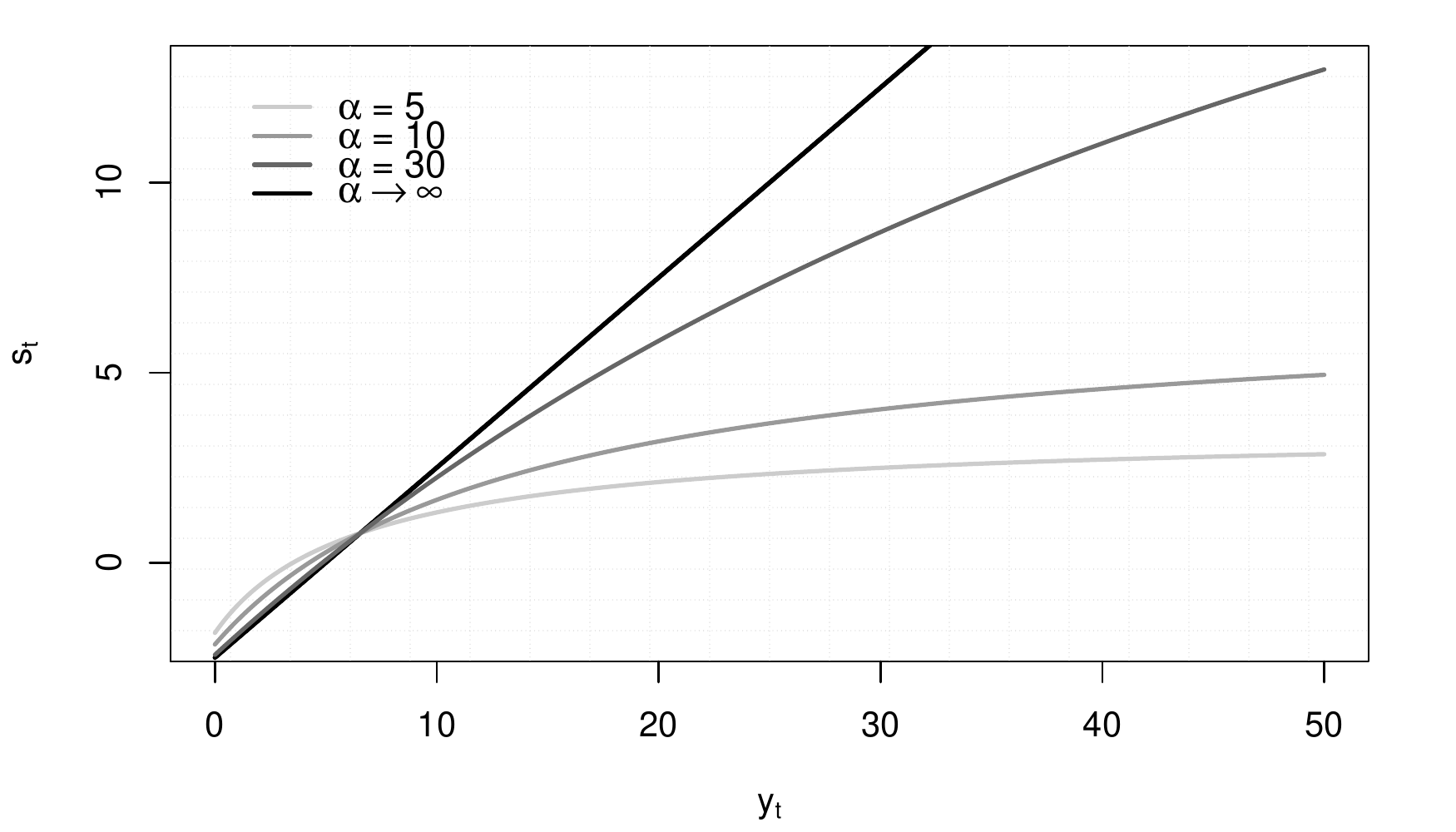}
\vspace{-0.5cm}
\caption{\small{Impact of $y_t$ on the score innovation $s_t$ for different values of the tail parameter $\alpha$.}}
\label{score_impact}
\end{figure} 

The BNB-GAS can approximate arbitrarily well some existing models that have been proposed in the literature.  As $\alpha\rightarrow \infty$, the BNB-GAS becomes a score-driven model with  negative binomial distribution, see \cite{gorgi2018integer} for an application of the GAS framework with negative binomials. As additionally $r\rightarrow \infty$, the  model  becomes a Poisson autoregressive model, which belongs to the class of models introduced by \cite{davis2003observation}.

We now focus on the stochastic properties of the BNB-GAS. The next theorem gives sufficient conditions for the existence of a stationary and ergodic solution for the BNB-GAS process.
\begin{theorem}\label{SE_BNBINGAS}
Consider the BNB-GAS process given in (\ref{BNB-GAS}) and (\ref{score}) and let $0\le \phi<1$ and 
\begin{align}\label{SEGAS}
\left(\tau \frac{\gamma(3\alpha+2r+3)+(\alpha+r+1)}{\gamma}+\phi \right) \exp\big(\tau (2\alpha+r+2)\big)<1.
\end{align}
Then, the process admits a stationary and ergodic solution with finite first moment $\mathbb{E}(y_t)<\infty$.
\end{theorem}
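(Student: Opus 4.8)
The plan is to reduce the claim to a contraction argument of the same type as in Theorem \ref{th1}, but carried out on the log scale, since the score-driven recursion (\ref{BNB-GAS}) evolves $f_t:=\log\lambda_t$ rather than $\lambda_t$ itself. The exponential link makes the level-scale updating function $g_{\pmb\theta}(y,\lambda)=\exp(\omega+\phi\log\lambda+\tau s(y,\lambda))$ fail to be Lipschitz in $\lambda$ uniformly on $\mathbb{R}^+$ (its derivative blows up as $\lambda\to0$), so the contraction condition (\ref{contraction}) cannot be verified directly. Instead I would apply the weakly-dependent-chains machinery of \cite{doukhan2008weakly} that underlies Theorem \ref{th1} to the recursion $f_{t+1}=\omega+\phi f_t+\tau s(y_t,e^{f_t})$. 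The whole argument rests on a handful of uniform estimates for the score innovation $s(y,\lambda)$ in (\ref{score}), which I would establish first.

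The first step is to bound the score and its sensitivities uniformly over $y\in\mathbb{N}$ and $\lambda\in\mathbb{R}^+$ using monotonicity and the asymptotics of the digamma and trigamma functions. Writing $u=\gamma\lambda$, one shows: (i) the boundedness $|s(y,\lambda)|\le \alpha+r+1$, with the sharper one-sided bounds giving an effective score range of length $2\alpha+r+2$; (ii) a Lipschitz bound of $s$ in $y$ whose coefficient decays like $1/\lambda$, so that $\lambda\,\partial_y s$ stays bounded — this cancellation is what lets the feedback through the observation equation (whose conditional mean is exactly $\lambda_t$) be controlled after passing to the log scale; and (iii) a Lipschitz bound of $s$ in $f=\log\lambda$ obtained from trigamma differences. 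The constants produced in (ii)--(iii) are what assemble into the factor $\frac{\gamma(3\alpha+2r+3)+(\alpha+r+1)}{\gamma}$ in (\ref{SEGAS}).

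The second step couples two copies $\{f_t\}$ and $\{f_t^*\}$ of the recursion driven by the same source of randomness, generating $y_t$ and $y_t^*$ from the BNB laws with means $\lambda_t$ and $\lambda_t^*$ through a common (monotone) coupling, so that $y_t$ and $y_t^*$ differ only because $\lambda_t\ne\lambda_t^*$. Bounding $\mathbb{E}[|f_{t+1}-f_{t+1}^*|\mid\mathcal{F}_{t-1}]$ with the estimates above, the autoregressive term contributes $\phi$ and the score-feedback term contributes $\tau$ times the bracketed constant; the conversion of a log-scale discrepancy into the level-scale discrepancy $|\lambda_t-\lambda_t^*|$ that governs the coupled observations, passed through the exponential link over the bounded range of the score, produces the multiplicative factor $\exp(\tau(2\alpha+r+2))$. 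Condition (\ref{SEGAS}) is then precisely the statement that the resulting per-step coefficient is strictly less than one, which, together with $0\le\phi<1$, delivers the contraction required by the chain-with-infinite-memory argument and hence a unique strictly stationary and ergodic causal solution $\{(y_t,\lambda_t)\}$.

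Finally, the finite first moment is immediate once stationarity is in place: since $|s_t|\le \alpha+r+1$ and $0\le\phi<1$, unrolling (\ref{BNB-GAS}) gives $\log\lambda_t\le (\omega+\tau(\alpha+r+1))/(1-\phi)$ almost surely, so $\lambda_t$ is bounded and $\mathbb{E}(y_t)=\mathbb{E}(\lambda_t)<\infty$. I expect the main obstacle to be the special-function analysis of the first step: obtaining clean, uniform bounds on the digamma/trigamma expressions across all $y\in\mathbb{N}$ and $\lambda\in(0,\infty)$ — in particular the delicate regimes $\lambda\to0$ and $y\to\infty$, and the $1/\lambda$ decay of $\partial_y s$ that makes the level-scale feedback controllable — while tracking the constants carefully enough that they reassemble into exactly the coefficients appearing in (\ref{SEGAS}). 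Verifying that the monotone coupling indeed makes the observation feedback governed by $|\lambda_t-\lambda_t^*|$, so that the infinite-memory contraction framework applies despite the endogenous innovations, is the accompanying conceptual point.
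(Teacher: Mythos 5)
Your ingredients are the right ones --- the bound $-(\alpha+r+1)\le s(y,\lambda)\le \alpha+1$, the $1/\lambda$ decay of $\partial s/\partial y$ coming from the trigamma recurrence, and the constant $3\alpha+2r+3$ controlling $\lambda\,\partial s/\partial\lambda$ all appear in the paper's argument. But your starting premise is wrong: the paper \emph{does} verify the contraction condition (\ref{contraction}) directly on the level scale. The observation you relegate to your final paragraph --- that $|s_t|\le \alpha+r+1$ and $0\le\phi<1$ force $\log\lambda_t\ge(\omega-\tau(\alpha+r+1))/(1-\phi)$ almost surely --- is the \emph{first} step of the paper's proof, not the last: it confines the state to an invariant set bounded away from zero, on which the mean value theorem applied to $g(y,\lambda)=\exp(\omega+\phi\log\lambda+\tau s(y,\lambda))$ yields $\sup|\partial g/\partial y|\le \tau\gamma^{-1}(\alpha+r+1)\exp(\tau(2\alpha+r+2))$ and $\sup|\partial g/\partial\lambda|\le(\phi+\tau(3\alpha+2r+3))\exp(\tau(2\alpha+r+2))$. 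The sum of these two suprema being less than one is exactly (\ref{SEGAS}), and Theorem \ref{th1} then applies verbatim to the level-scale recursion. The factor $\exp(\tau(2\alpha+r+2))$ is the ratio $\exp(\omega+\tau(\alpha+1))/\exp(\omega-\tau(\alpha+r+1))$ that appears when $\exp(\tau s)\lambda^{\phi-1}$ is bounded over the score range and the invariant set; it is not a log-to-level conversion factor.

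This matters because your log-scale coupling, as described, does not land on (\ref{SEGAS}). Over the invariant set the conversion between $|\log\lambda-\log\lambda^*|$ and the level discrepancy $|\lambda-\lambda^*|$ governing the coupled observations costs a factor of order $\sup_t\lambda_t/\inf_t\lambda_t=\exp\big(\tau(2\alpha+r+2)/(1-\phi)\big)$, not $\exp\big(\tau(2\alpha+r+2)\big)$, and the resulting per-step coefficient $\phi+\tau(3\alpha+2r+3)+\tau\gamma^{-1}(\alpha+r+1)\exp\big(\tau(2\alpha+r+2)/(1-\phi)\big)$ is not implied by (\ref{SEGAS}) when $\phi$ is near one; moreover your route would contribute the autoregressive term as $\phi$ alone, whereas in (\ref{SEGAS}) the $\phi$ term is also multiplied by the exponential factor, so the bracket structure you claim to recover cannot arise from the derivation you sketch. (Done carefully --- evaluating the $y$-Lipschitz constant of $s$ at the larger of the two coupled means, so that $(\alpha+r+1)(\gamma\lambda_2)^{-1}|\lambda_1-\lambda_2|\le\gamma^{-1}(\alpha+r+1)|\log\lambda_1-\log\lambda_2|$ --- the exponential factor cancels entirely and you would obtain the cleaner sufficient condition $\phi+\tau(3\alpha+2r+3)+\tau\gamma^{-1}(\alpha+r+1)<1$; that is a legitimate and arguably sharper result, but it is not a proof of the stated implication.) As written, the proposal therefore has a gap with respect to the theorem as stated, and the direct level-scale verification is both shorter and the one that produces the stated constants.
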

The proof of this theorem is obtained by an application of Theorem \ref{th1}. We note that the parameter restriction in (\ref{SEGAS}) is  a sufficient condition  that makes the contraction condition of Theorem \ref{th1} hold. Given the complex functional form of $s_t$, it is not straightforward  to obtain sharper upper bounds for the  Lipschitz coefficients $c_1$ and $c_2$ in  (\ref{contraction}). The condition imposed on the parameters by (\ref{SEGAS}) may be restrictive in practice, however, this condition highlights that the stationarity region is not degenerate.

\begin{remark}\label{RE_BNBINGAS}
Under the conditions of Theorem \ref{th1}, the BNB-GAS process admits a stationary and ergodic solution with  $\mathbb{E}(y_t^m)<\infty$ if and only if $\alpha>m$. This result holds  true because $\lambda_t$ takes vales on a compact set with probability one, see the proof of Theorem \ref{th1} in the Appendix. This means that $\mathbb{E}(\lambda_t^m)<\infty$ for any $m\in \mathbb{N}$. Therefore $\mathbb{E}(y_t^m)<\infty$ if and only if  $\mathbb{E}(y_t^m|\mathcal{F}_{t-1})<\infty$ with probability one, namely,  $\alpha>m$.
\end{remark}


\section{Empirical application} 
\label{section5}

In this section, we present an empirical application to the monthly  number  of police reports on narcotics trafficking  in Sydney, Australia.  The time series is from January 1995 to December 2016 and it is available in the New South Wales dataset of police reports.
Figure \ref{figts} displays the plot  and the empirical autocorrelation functions of the series. We can see that the dataset presents some extreme observations. In particular, the number of narcotics trafficking reports is exceptionally high in  August 2000, March 2008 and May 2015. Therefore,  BNB autoregressive models seem particularly suited to describe the autocorrelation structure  and account for the outliers in the dataset.

\begin{figure}[h!]
\center
\includegraphics[scale=0.65]{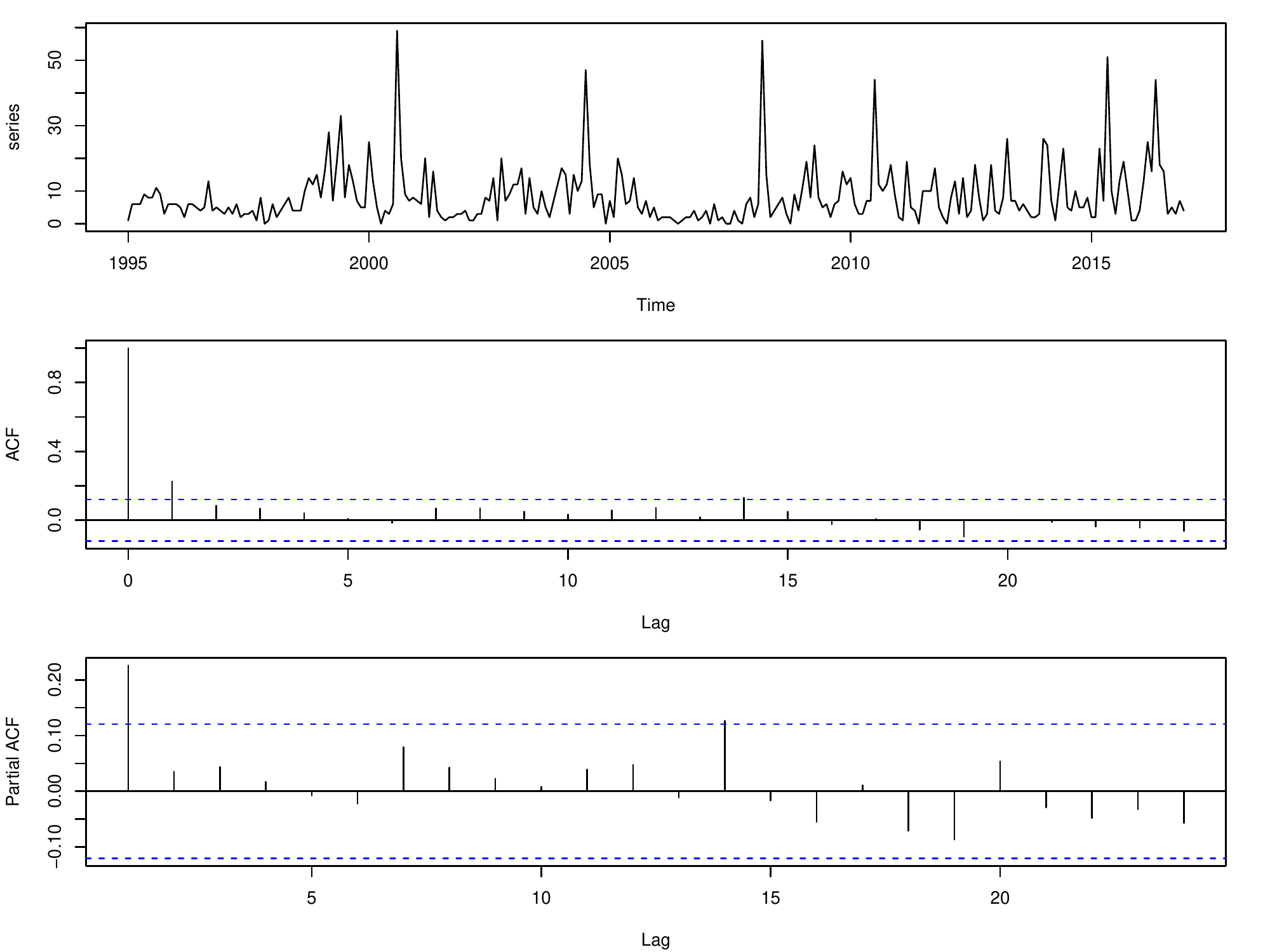}
\vspace{-0.5cm}
\caption{\small{The first plot shows the monthly number of police reports for narcotics trafficking  in Sydney. The second and third plots present the sample autocorrelation functions of the series.}}
\label{figts}
\end{figure}

Besides the BNB-INGARCH and the BNB-GAS, we consider two negative binomial specifications: one with a linear updating function  and  one based on the GAS framework, which we label as linear NB-INGARCH and NB-GAS respectively. As discussed before, these two models are limit cases, $\alpha\rightarrow\infty$, of the   BNB-INGARCH and the BNB-GAS model. Table \ref{tab2} reports the estimation results. The BNB specifications give a better description of the time series since they have lower values of the Akaike information criterion (AIC). More specifically, the BNB-GAS is the model that best fits the data. This suggests that the robust updating function given by the score innovation is beneficial in this case.
Furthermore, the relevance of the BNB distribution can also be elicited from the relatively low estimates of the tail parameter $\alpha$, which is estimated to be around $5$ with a standard error of about $0.8$ for both BNB specifications. This further  indicates that the extreme observations in the data   are not properly described by a negative binomial distribution.

\begin{table}[ht]
\centering
\caption{\small{Maximum Likelihood estimates of the models. Standard errors are in brackets. The last two columns contain the log-likelihood   and the AIC, respectively. The parameter $\delta$ is $\delta=\omega/(1-\phi-\tau)$ for the INGARCH models and $\delta=\omega/(1-\phi)$ for the GAS models.}}
\begin{tabular}{lccccccc}
 \cline{1-8}
\vspace{-0.7cm} \\
 & $\delta$ & $\phi$ & $\tau$ & $r $ & $\alpha $  & log-lik  & AIC \\ 
 \cline{1-8}\vspace{-0.7cm} \\
BNB-INGARCH  & 8.549 & 0.481  & 0.267 & 6.521 & 4.819& -807.66 &  1625.33 \vspace{-0.3cm} \\ 
  & \footnotesize{(1.104)} & \footnotesize{(0.223)} & \footnotesize{(0.085)} & \footnotesize{(3.394)} &\footnotesize{(0.744)}&  &  \vspace{0.1cm}\\ 
BNB-GAS  & 2.087 & 0.714 & 0.197 & 4.408   & 5.029 & -807.04 & \textbf{1624.09}  \vspace{-0.3cm} \\  
  & \footnotesize{(0.107)} &\footnotesize{(0.169)}  &\footnotesize{(0.056)}  & \footnotesize{(1.923)}  &\footnotesize{(0.849)}&  &  \vspace{0.1cm}\\ 
NB-INGARCH  & 8.675 & 0.307 & 0.308 & 1.561 & - & -821.22 & 1650.45 \vspace{-0.3cm} \\ 
  & \footnotesize{(0.907)} & \footnotesize{(0.254)} & \footnotesize{(0.099)} & \footnotesize{(0.159)} && &  \vspace{0.1cm}\\ 
NB-GAS   & 2.102& 0.699  & 0.140 &   1.540 & - & -822.59 & 1653.19  \vspace{-0.3cm} \\  
  & \footnotesize{(0.087)} &\footnotesize{(0.232)}  & \footnotesize{(0.051)} & \footnotesize{(0.156)} &&  &  \\ 
   \hline
\end{tabular}
\label{tab2}
\end{table}

Figure \ref{filters} reports the estimated conditional mean for the NB-GAS and the BNB-GAS. We can see that the BNB-GAS estimate of $\lambda_t$ is robust to the outliers in  August 2000, March 2008 and May 2015. Instead, on the contrary, the estimate from the negative binomial specification is strongly affected by the outliers. This further highlights the empirical relevance of  BNB autoregressive models in modeling integer-valued time series with extreme values.

\begin{figure}[h!]
\center
\includegraphics[scale=0.65]{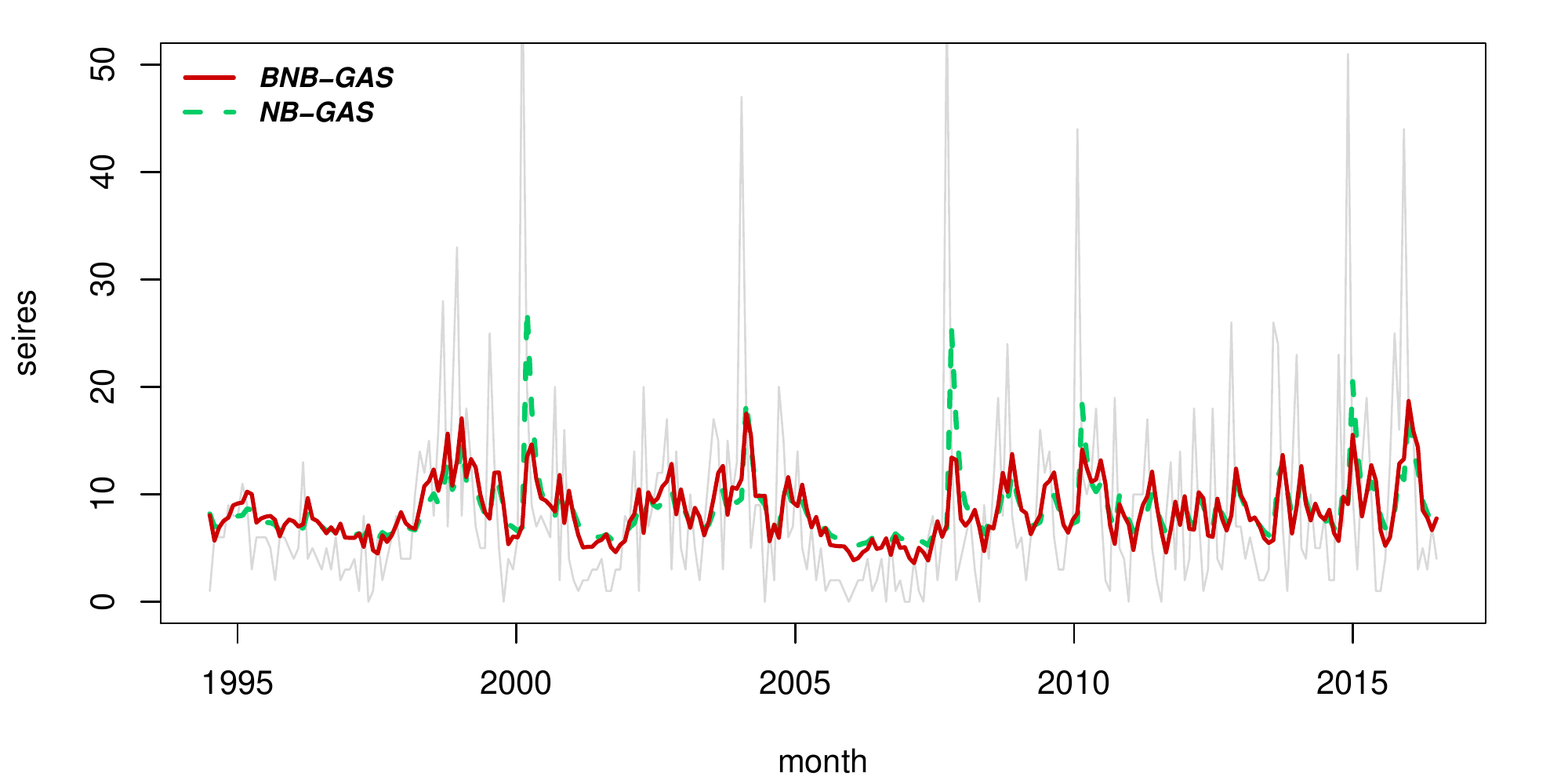}
\vspace{-0.5cm}
\caption{\small{Estimated conditional mean $\lambda_t$ from the BNB-GAS model (in red) and the NB-GAS (in green). The original time series is displayed in gray.} }
\label{filters}
\end{figure}

\section{Conclusion}
\label{section6}
This  article introduces a general framework for modeling  integer-valued time series with outliers. The paper proposes a class of observation-driven models that are based on a mixture of negative binomial distributions, known as the BNB distribution. The stochastic properties of the models and the asymptotic theory of ML estimation are formally discussed.  Two different specifications are considered and studied. An empirical application  illustrates the practical relevance of the approach. Further research may focus on extending the proposed method to more flexible specifications. For instance, relevant developments may include embedding the model with a zero-inflated BNB distribution to handle time series with large numbers of zeros.

\appendix

\section{Appendix}

\subsection{Proofs}

\begin{proof}[Proof of Theorem \ref{th1}]
First we show that (i) holds true. For convenience, we rewrite the equations of the BNB autoregressive process as follows
$$y_t= F_{\lambda_t}^{-1}(u_t), \quad \lambda_{t+1}=g_{{\pmb{\theta}}}(F_{\lambda_t}^{-1}(u_t),\lambda_t),$$
where $\{u_t\}_{t\in\mathbb{Z}}$ is an iid sequence of uniform random variables, $u_t\sim \mathcal{U}(0,1)$, and $F_{\lambda_t}^{-1}(u)=\inf_{z\in\mathbb{R}}\{u\le F_{\lambda_t}(z)\}$, $u\in (0,1)$, where $F_{\lambda_t}(z)$, $z\in\mathbb{R}$, is the cumulative distribution function of a BNB random variable $\mathcal{BNB}(\lambda_t,r,\alpha)$. We prove the result by showing that there is a unique stationary and ergodic causal solution $\{\lambda_t\}_{t\in\mathbb{Z}}$ with $\mathbb{E}(\lambda_t)<\infty$ by an application of Theorem 3.1 in \cite{doukhan2008weakly} (see also Remark 3.1). Then it is immediate to conclude that there is a unique stationary and ergodic causal solution  $\{(y_t,\lambda_t)\}_{t\in\mathbb{Z}}$ and,  given $\alpha>1$, we have  $\mathbb{E}(y_t)<\infty$. In the following, we show that  the conditions (3.1) and (3.2) of Theorem 3.1 in \cite{doukhan2008weakly} are satisfied. Note that (3.3) trivially holds given $\alpha>1$. From the contraction condition in (\ref{contraction}), we obtain that
\begin{align*}
\mathbb{E}\left|g_{\pmb{\theta}}\left(F_{x_1}^{-1}(u_0),x_1\right)-g_{\pmb{\theta}}\left(F_{x_2}^{-1}(u_0),x_2\right)\right| &=\int_{0}^1\left|g_{\pmb{\theta}}\left(F_{x_1}^{-1}(u),x_1\right)-g_{\pmb{\theta}}\left(F_{x_2}^{-1}(u),x_2\right)\right|d u\\
&\le c_1 \left|x_1-x_2\right| +c_2 \int_{0}^1 \left|F_{x_1}^{-1}(u)-F_{x_2}^{-1}(u)\right| du.
\end{align*}
Now, if  $x_1\ge x_2$, by appealing to the stochastic ordering result in Lemma \ref{lemma1},  we obtain that $ F_{x_1}^{-1}(u) \ge F_{x_2}^{-1}(u)$ for any $u\in (0,1)$. Therefore, we have that
\begin{align*}
\mathbb{E}\left|g_{\pmb{\theta}}\left(F_{x_1}^{-1}(u_0),x_1\right)-g_{\pmb{\theta}}\left(F_{x_2}^{-1}(u_0),x_2\right)\right|
&\le c_1 \left|x_1-x_2\right| +c_2 \int_{0}^1\left|F_{x_1}^{-1}(u)-F_{x_2}^{-1}(u)\right| du \\
&= c_1 \left|x_1-x_2\right| +c_2 \left( \int_{0}^1   F_{x_1}^{-1}(u) du -  \int_{0}^1 F_{x_2}^{-1}(u) du\right)\\
&= \left(c_1 +c_2\right)  \left|x_1-x_2\right|,
\end{align*}
where the last equality follows from the fact that $\int_{0}^1   F_{\lambda}^{-1}(u) du =\lambda$. In a similar way, it is straightforward to show that the same result holds also if $x_1\le x_2$ and therefore 
$$\mathbb{E}\left|g_{\pmb{\theta}}\left(F_{x_1}^{-1}(u_0),x_1\right)-g_{\pmb{\theta}}\left(F_{x_2}^{-1}(u_0),x_2\right)\right| \le  \left(c_1 +c_2\right)  \left|x_1-x_2\right|$$ holds for any $x_1, x_2\in \mathbb{R}^+$. As a result, we  obtain that (3.1) and (3.2) are satisfied since $c_1 +c_2<1$ holds by assumption. 

Finally,  we show that (ii) holds by an application of  Theorem 3.1 of \cite{Bougerol1993}. We study  the following stochastic recurrence equation (SRE) for $\lambda_t$
\begin{equation*}
  \lambda_{t+1} =g_{\pmb{\theta}}(y_t,   \lambda_{t} ), \quad t \in \mathbb{Z},
\end{equation*}
where $\{y_t\}_{t\in\mathbb{Z}}$ is the unique stationary and erogodic causal solution of the model. Therefore, $\{g_{\pmb{\theta}}(y_t,  \cdot)\}_{t\in\mathbb{Z}}$ is a stationary and ergodic sequence of functions from $\mathbb{R}^+$ into $\mathbb{R}^+$.
If  the  conditions  C1 and C2 of Theorem 3.1 of \cite{Bougerol1993} are satisfied for  the sequence $\{g_{\pmb{\theta}}(y_t,  \cdot)\}$, then  we obtain that  $\lambda_t$ is $\mathcal{F}_{t-1}$-measurable. Condition C1 is immediately satisfied since, for any $\lambda\in \mathbb{R}^+$, we have
$$\mathbb{E}(\log^+|g_{\pmb{\theta}}(y_t, \lambda) |)\le  |g_{\pmb{\theta}}(0, 0)  | + c_1\mathbb{E} (y_t)+ c_2 \lambda <\infty,$$
where $\mathbb{E} (y_t)<\infty$ follows from (i). As concerns C2, from the contraction condition in (\ref{contraction})  we obtain the following Lipschitz coefficient
$$\frac{|g_{\pmb{\theta}}(y_t, \lambda)-g_{\pmb{\theta}}(y_t, \lambda^*)|}{|\lambda-\lambda^*|}\le c_2 <1.$$
Therefore, we conclude that condition  C2 is satisfied. This concludes the proof of the theorem.
\end{proof}

\begin{proof}[Proof of Theorem \ref{th2}]
In the following, we use the shorthand notation $\mathbb{E}_{t}( \cdot )=\mathbb{E}(\cdot|\mathcal{F}_{t})$  to denote conditional expectations.

The contraction condition in (\ref{contraction}) implies that
\begin{align}\label{ub}
\lambda_t \le c_0 +  c_1 y_{t-1} +c_2 \lambda_{t-1},
\end{align}
where $c_0=g_{\pmb\theta}(0,0)$. Therefore, taking the conditional expectation of $\lambda_t$, we obtain that
\begin{align}
\mathbb{E}_{t-2}(\lambda_t) &\le c_0 +  c_1 \mathbb{E}_{t-2}(y_{t-1}) +c_2 \lambda_{t-1},\nonumber \\
 &= \tilde a_{10} +  \tilde a_{11} \lambda_{t-1}, \label{lambda_ub}
\end{align}
where $\tilde a_{10} = c_0$ and $ \tilde a_{11}= c_1+c_2$.
Similarly, given the inequality in (\ref{ub}), we derive the following upper bound for $\lambda_t^2$
\begin{align*}
\lambda_t^2 \le c_0^2 +  c_1^2 y_{t-1}^2 +c_2^2 \lambda_{t-1}^2 +2 c_0c_1 y_{t-1}+2 c_0c_2 \lambda_{t-1}+2 c_1c_2 y_{t-1} \lambda_{t-1}.
\end{align*}
Now, noticing that the  conditional expectation of $y_t^2$ can be expressed as
\begin{align*}
\mathbb{E}_{t-1}(y_t^2)= \frac{(\alpha+r-1)}{(\alpha-2)}\lambda_t+\frac{(r+1)(\alpha-1)}{r(\alpha-2)}\lambda_t^2,
\end{align*}
we obtain that
\begin{align*}
\mathbb{E}_{t-2}(\lambda_t^2) \le c_0^2 +  c_1^2 \mathbb{E}_{t-2}(y_{t-1}^2) +c_2^2 \lambda_{t-1}^2 +2 c_0c_1 \mathbb{E}_{t-2}(y_{t-1})+2 c_0c_2 \lambda_{t-1}+2 c_1c_2 \mathbb{E}_{t-2}(y_{t-1}) \lambda_{t-1}.
\end{align*}
Calculating the conditional expectations on the right hand side of  the above equation yields 
\begin{align}\label{lambda2_ub}
\mathbb{E}_{t-2}(\lambda_t^2) \le \tilde a_{20} + \tilde a_{21} \lambda_{t-1}+\tilde a_{22} \lambda_{t-1}^2,
\end{align}
where
\begin{align*}
\tilde a_{20}=c_0^2, \quad  \tilde a_{21}=\frac{(\alpha+r-1)}{(\alpha-2)}c_1^2 + 2c_0c_1 +2c_0c_2, \quad \text{and}\quad \tilde a_{22}=\frac{(r+1)(\alpha-1)}{r(\alpha-2)}c_1^2+c_2^2+2c_1c_2.
\end{align*}
Therefore, combining equations (\ref{lambda_ub}) and (\ref{lambda2_ub}), we can write the following bivariate system for $\text{\pmb{x}}_t=(\lambda_t,\lambda^2_t)^\top$
$$\mathbb{E}_{t-2}(\text{\pmb{x}}_t) \le \text{\pmb{d}}_0 +\text{\pmb{D}}_1 \text{\pmb{x}}_{t-1},$$
where 
$$
\text{\pmb{d}}_0=
\begin{bmatrix}
    \tilde a_{10}         \\
    \tilde a_{20}        \\
\end{bmatrix}, \quad
\text{\pmb{D}}_1=
\begin{bmatrix}
    \tilde a_{11}       & 0  \\
    \tilde a_{21}       & \tilde a_{22} \\
\end{bmatrix}.
$$
Reiterating  conditional expectations we obtain that
$$\mathbb{E}_{t-k}(\text{\pmb{x}}_t) \le  \sum_{i=0}^{k-2} \text{\pmb{D}}_1^i \ \text{\pmb{d}}_0+\text{\pmb{D}}_1^{k-1} \text{\pmb{x}}_{t-k+1}.$$
Since  $\text{\pmb{D}}_1$ is a lower triangular matrix, the eigenvalues of $\pmb{D}_1$ are the diagonal entries and by assumption they are smaller than one $\tilde a_{11} \le \tilde a_{22} <1$.
Therefore, taking the limit $k\rightarrow \infty$ we obtain
$$\mathbb{E}(\text{\pmb{x}}_t) \le   (\text{\pmb{I}}-\text{\pmb{D}}_1)^{-1}\text{\pmb{d}}_0,$$
which implies $\mathbb{E}(\lambda_t^2)<\infty$. The final result follows noticing that $\mathbb{E}(\lambda_t^2)<\infty$ entails $\mathbb{E}(y_t^2)<\infty$ since $\alpha>2$. This concludes the proof of the theorem.
\end{proof}

\begin{proof}[Proof of Proposition \ref{invertibility}]

The proof of the invertibility result is obtained by an application of Theorem 3.1 of \cite{Bougerol1993}.   We follow \cite{SM2006} (Proposition 3.12) and  apply Bougerol's theorem in the space of continuous functions $\mathbb{C}(\Theta,\mathbb{R})$ equipped with the uniform  norm $\|\cdot\|_{\Theta}$. Our  SRE   is of the form
\begin{equation*}
\hat \lambda_{t+1}({\pmb{\theta}})=g_{\pmb{\theta}}(y_t, \hat \lambda_{t}({\pmb{\theta}})), \quad t \in \mathbb{N}.
\end{equation*}
First, we note that our SRE satisfies the stationarity and continuity requirements to apply Bougerol's theorem in  $\mathbb{C}(\Theta,\mathbb{R})$. In particular, the  function   $\pmb{\theta}\mapsto \hat \lambda_{t}({\pmb{\theta}})$ is continuous for any $t\in \mathbb{N}$  by Assumption \ref{a1},  and the sequence $\{g_{\pmb{\theta}}(y_t, \lambda)\}$ is stationary and ergodic for any $\lambda \in \mathbb{R}^+$ by Theorem \ref{th1}.  Next, we show that the conditions C1 and C2 in Theorem 3.1 of \cite{Bougerol1993} are satisfied. As concerns C1, we obtain that 
$$\mathbb{E}(\log^+\|g_{\pmb{\theta}}(y_t, \lambda) \|_{\Theta})\le \|g_{\pmb{\theta}}(0, 0) \|_{\Theta}+ c_1\mathbb{E} (y_t)+ c_2 \lambda <\infty $$
since the contraction holds over $\Theta$ by Assumption \ref{a2},  $\Theta$ is compact, and $\mathbb{E}(y_t)<\infty$ by Theorem \ref{th1}. Finally, C2 is immediately satisfied as 
$$\frac{|g_{\pmb{\theta}}(y_t, \lambda)-g_{\pmb{\theta}}(y_t, \lambda^*)|}{|\lambda-\lambda^*|}\le c_2 <1$$
with probability one for any $\pmb{\theta} \in\Theta$ by   Assumption \ref{a2}.

Finally, we conclude the proof by showing that $\mathbb{E}\|\tilde\lambda_t\|_{\Theta}<\infty$. By Assumption \ref{a2}, we obtain that with probability 1
\begin{align*}
\|\tilde\lambda_t \|_{\Theta}&\le c_0 +  c_1\ y_{t-1} + c_2 \ \|\tilde\lambda_{t-1}\|_{\Theta} \\ 
&\le  \sum_{i=0}^{t-1} c_2^i\  (c_0 +c_1\ y_{t-1-i}) +  c_2^t \ \|\tilde\lambda_{0}\|_{\Theta}\\
&\le  \sum_{i=0}^{\infty} c_2^i\  (c_0 +c_1\ y_{t-1-i}) +  c_2^t \ \|\tilde\lambda_{0}\|_{\Theta},
\end{align*}
where $c_0=\|g_{\pmb{\theta}}(0, 0) \|_{\Theta}$. Therefore, since $c_2<1$, for large enough $t$ we obtain that
\begin{align*}
\|\tilde\lambda_t \|_{\Theta}
\le  1+\sum_{i=0}^{\infty} c_2^i\  (c_0 +c_1\ y_{t-1-i}),
\end{align*}
and by stationarity we have that the above inequality holds for any $t$. Therefore, the desired result follows by noticing that $\mathbb{E}(y_t)<\infty$ by Theorem \ref{th1}. 
\end{proof}

\begin{proof}[Proof of Theorem \ref{th3}]
In the following, we  show:
\begin{enumerate}[(a)]
\item Uniform convergence of the likelihood, i.e.~$\|\hat L_T-  L\|_{K}\xrightarrow{a.s.} 0$, as $T\rightarrow \infty$. Here the limit likelihood function $L$ is defined as $L(\pmb{\kappa})=\mathbb{E}(l_t(\pmb{\kappa}))$.
\item Identifiability of $\pmb{\kappa}_0$, i.e.~$L({\pmb{\kappa}}_0)>L({\pmb{\kappa}})$ \ $\forall$ \ ${\pmb{\kappa}}\in K, {\pmb{\kappa}}\neq {\pmb{\kappa}}_0$.
\end{enumerate}
 Then, given  the conditions (a) and (b) and the compactness of $K$,  the consistency  $\hat {\pmb{\kappa}}_T\xrightarrow{a.s.}{\pmb{\kappa}}_0$ follows immediately by  standard arguments that go back to \cite{wald1949}.

\noindent (a)  An application of the triangle inequality yields
\begin{align}
|\hat L_T({\pmb{\kappa}})-L({\pmb{\kappa}})|\le |\hat L_T({\pmb{\kappa}})-L_T({\pmb{\kappa}})|+|L_T({\pmb{\kappa}})-L({\pmb{\kappa}})|,
\label{triangle}
\end{align}
where $L_T({\pmb{\kappa}})=\frac{1}{T}\sum_{t=1}^T l_t(\pmb\kappa)$. 
Therefore, the desired result follows if we can show that both terms on the right hand side of the inequality (\ref{triangle}) are vanishing almost surely and uniformly as  $T$ diverges. 

As concerns the first term in (\ref{triangle}),  Assumption \ref{a3} implies that  $\tilde \lambda_t({\pmb{\theta}})>\bar c$ with probability 1 for any ${\pmb{\theta}} \in \Theta$. An application of the mean value theorem together  with Lemma \ref{lemma100} yields
 \begin{align*}
|\hat l_t({\pmb{\kappa}})-l_t({\pmb{\kappa}})|&\le \sup_{{\pmb{\kappa}} \in K}\sup_{\lambda \in [\bar c,+\infty) }\left|\frac{\partial\log p(y_t|\lambda,r,\alpha)}{\partial \lambda}\right||\hat \lambda_t({\pmb{\theta}})-  \tilde  \lambda_t({\pmb{\theta}})|\\
&\le c_\lambda \ |\hat \lambda_t({\pmb{\theta}})-  \tilde  \lambda_t({\pmb{\theta}})|.
\end{align*}
As a result,  since $\|\hat \lambda_t -  \tilde  \lambda_t \|_\Theta\xrightarrow{e.a.s.}0$ by Proposition \ref{invertibility}, we obtain $\|\hat l_t-l_t\|_K\xrightarrow{a.s.}0$ and therefore
$$\|\hat L_T-L_T\|_K\le \frac{1}{T}\sum_{t=1}^T \|\hat l_t-l_t\|_K\xrightarrow{a.s.}0.$$

As concerns the second term in (\ref{triangle}), given the continuity of the log-likelihood and the compactness of $K$,  we obtain that $\|L_T-L\|_K\xrightarrow{a.s.}0$ follows by an application of the ergodic theorem of  \cite{rao1962}, provided that the log-likelihood has a uniformly bounded moment, i.e.~$\mathbb{E}\|l_t\|_K<\infty$. 
We notice that the log-likelihood can be bounded as follows 
\begin{align*}
|l_t(\pmb{\kappa})|=&\left|\log\frac{\Gamma(y_t+r)}{\Gamma(y_t+1)\Gamma(r)}+\log\frac{B\Big((\alpha-1)\tilde\lambda_t(\pmb\theta)/r+r,(\alpha-1)\tilde\lambda_t(\pmb\theta)/r+y_t\Big)}{B\Big(\alpha ,(\alpha-1)\tilde\lambda_t(\pmb\theta)/r \Big)}\right|\\
\le & |\log B(r,\alpha)|+|\log\Gamma(y_t+r)-\log\Gamma(y_t+1)|\\
& + \Big|\log\Gamma\Big((\alpha-1)\tilde\lambda_t(\pmb\theta)/r+y_t+\alpha+r\Big)-\log\Gamma\Big((\alpha-1)\tilde\lambda_t(\pmb\theta)/r+y_t\Big)\Big|\\
& + \Big|\log\Gamma\Big((\alpha-1)\tilde\lambda_t(\pmb\theta)/r+\alpha\Big)-\log\Gamma\Big((\alpha-1)\tilde\lambda_t(\pmb\theta)/r\Big)\Big|.
\end{align*}
Therefore, $\mathbb{E}\|l_t\|_K<\infty$ follows by an application of Lemma \ref{lemma_gamma} since  $\inf_{\pmb \theta \in \Theta}\tilde \lambda_t(\pmb\theta)\ge \bar c$ a.s.~by Assumption \ref{a3},  $\mathbb{E}(y_t)<\infty$ by Theorem \ref{th1} and $\mathbb{E}\|\tilde \lambda_t\|_{\Theta}< \infty$ by Proposition \ref{invertibility}.
This concludes the proof of (a).

\noindent (b) First, we show that $l_t({\pmb{\kappa}})=l_t({\pmb{\kappa}}_0)$ a.s.~if and only if (iff) ${\pmb{\kappa}}={\pmb{\kappa}}_0$. It is clear that the parameters of the  BNB pmf are well identified. In particular,     we have that
$$  p(y | \tilde \lambda_t({\pmb{\theta}}_1), r_1, \alpha_1) =   p(y | \tilde \lambda_t({\pmb{\theta}}_2), r_2, \alpha_2) \; \forall \;y\in\mathbb{N} \quad \text{iff}\quad (\tilde\lambda_t({\pmb{\theta}}_1), r_1, \alpha_1) = (\tilde\lambda_t({\pmb{\theta}}_2), r_2, \alpha_2).$$
Therefore, the desired result follows if we can show that $\tilde \lambda_t({\pmb{\theta}}_1)=\tilde \lambda_t({\pmb{\theta}}_2)$ iff ${\pmb{\theta}}_1={\pmb{\theta}}_2$ for any $\pmb{\theta}_1,\pmb{\theta}_2\in\Theta$. We prove the result by contradiction.
Assume that ${\pmb{\theta}}_1\neq{\pmb{\theta}}_2$ and $\tilde \lambda_t({\pmb{\theta}}_1)=\tilde \lambda_t({\pmb{\theta}}_2)$ a.s., then, given the stationarity of  $\tilde \lambda_t$, we have that $\tilde \lambda_t({\pmb{\theta}}_1)=\tilde \lambda_t({\pmb{\theta}}_2)$ a.s.~for any $t$. Therefore, we can assume that $\tilde\lambda_{t-1}({\pmb{\theta}}_1)=\tilde\lambda_{t-1}({\pmb{\theta}}_2)=\lambda$ with probability 1 and it must be true that $g_{{\pmb{\theta}}_1}(y_t,\lambda)=g_{{\pmb{\theta}}_2}(y_t,\lambda)$ with probability 1. However, this cannot be true because  Assumption \ref{a4} implies  that $g_{{\pmb{\theta}}_1}(y_t,\lambda)\neq g_{{\pmb{\theta}}_2}(y_t,\lambda)$ with positive probability. We conclude that $\tilde \lambda_t({\pmb{\theta}}_1)=\tilde \lambda_t({\pmb{\theta}}_2)$ iff ${\pmb{\theta}}_1={\pmb{\theta}}_2$  and therefore $l_t({\pmb{\kappa}})=l_t({\pmb{\kappa}}_0)$ a.s.~iff ${\pmb{\kappa}}={\pmb{\kappa}}_0$.

Finally, we  show that $l_t({\pmb{\kappa}})=l_t({\pmb{\kappa}}_0)$ a.s.~iff ${\pmb{\kappa}}={\pmb{\kappa}}_0$ entails the identifiability condition (b).
 It is well known that $\log(x)\le x-1$ for any $x\in \mathbb{R}^+$ with the equality  only in the case $x=1$. This implies that almost surely
\begin{eqnarray}\label{ineq}
l_t({\pmb{\kappa}})-l_t({\pmb{\kappa}}_0)\le\frac{p(y_t|\tilde \lambda_t({\pmb{\theta}}),r,\alpha)}{p(y_t|  \lambda_t,r_0,\alpha_0)}-1.
\end{eqnarray}
Moreover, we have that  the inequality in (\ref{ineq}) holds as a strict inequality with positive probability because  $p(y_t|\tilde \lambda_t({\pmb{\theta}}),r,\alpha)\neq p(y_t|\lambda_t,r_0,\alpha_0)$ with positive probability for any ${\pmb{\kappa}}\in K$, ${\pmb{\kappa}}\neq{\pmb{\kappa}}_0$. As a result
\begin{eqnarray*}
\mathbb{E}\left[\mathbb{E}\left[l_t({\pmb{\kappa}})-l_t({\pmb{\kappa}}_0)|\mathcal{F}_{t-1}\right]\right] < \mathbb{E}\left[\mathbb{E}\left[\frac{p(y_t|\tilde \lambda_t({\pmb{\theta}}),r,\alpha)}{p(y_t|  \lambda_t,r_0,\alpha_0)}\Big|\mathcal{F}_{t-1}\right]\right]-1=0, \quad    \forall \ {\pmb{\kappa}} \neq {\pmb{\kappa}}_{0},
\end{eqnarray*}
where the right hand side of the inequality is equal to zero as $p(y_t|  \lambda_t ,r_0,\alpha_0)$ is the true conditional pmf.
Finally, the desired result $L({\pmb{\kappa}}_0)>L({\pmb{\kappa}})$ follows  by the law of total expectation
$$L({\pmb{\kappa}})-L({\pmb{\kappa}}_0)=\mathbb{E}\left[\mathbb{E}\left[l_t({\pmb{\kappa}})-l_t({\pmb{\kappa}}_0)|\mathcal{F}_{t-1}\right]\right]<0 \quad   \forall \ {\pmb{\kappa}} \neq {\pmb{\kappa}}_{0}.$$
This concludes the proof of (b).
\end{proof}

\begin{proof}[Proof of Theorem \ref{th4}]
For the asymptotic normality proof, we follow a similar argument as the in proof in Section 7 of \cite{SM2006}. First, we derive the asymptotic distribution of the ML estimator $\tilde{\pmb{\kappa}}_T$ based on the limit likelihood $L_T$, which is defined as
$$\tilde{\pmb\kappa}_T=\argsup_{\pmb{\kappa} \in K}L_{T}(\pmb{\kappa}).$$
Then, we show that $\hat{\pmb\kappa}_T$ and $\tilde{\pmb\kappa}_T$ have the same asymptotic distribution.

From the uniform convergence in Lemmas \ref{lemma_n1.1} and  \ref{lemma_n1.2}, we have that $\tilde \lambda_t$ is twice continuosly differentiable in $\Theta$ with first and second derivatives given by $\tilde \lambda_t' $ and $\tilde \lambda_t'' $, respectively. This  immediately implies that the limit likelihood $L_T$ is   twice continuously differentiable in the compact set $K$. Therefore, a Taylor expansion around ${\pmb\kappa}_0$ yields
$$ L_T'(\tilde {\pmb{\kappa}}_T)=  L_T'({\pmb{\kappa}}_0)+  L_T''(\pmb{\eta}_T)(\tilde {\pmb{\kappa}}_T-{\pmb{\kappa}}_0),$$
where $\pmb{\eta}_T$ is a point between $\tilde {\pmb{\kappa}}_T$ and ${\pmb{\kappa}}_0$. By definition, $\tilde {\pmb{\kappa}}_T$ is the maximizer of $L_T({\pmb{\kappa}})$. Therefore, we have that $ L_T'(\tilde {\pmb{\kappa}}_T)=0$ for large enough $T$ since $\tilde {\pmb{\kappa}}_T \xrightarrow{a.s.}{\pmb{\kappa}}_0$ and ${\pmb{\kappa}}_0\in \text{int}(K)$. As a result, the following equation holds true
$$\sqrt{T}  L_T'({\pmb{\kappa}}_0)=-  L_T''(\pmb{\eta}_T) \sqrt{T}(\hat {\pmb{\kappa}}_T-{\pmb{\kappa}}_0).$$

By Lemma \ref{lemma_n2} together with an application of the ergodic theorem of  \cite{rao1962}, we obtain that $-  L_T''(\pmb{\eta}_T) \xrightarrow{a.s.}\pmb{\text{F}}_0$, where $\pmb{\text{F}}_0=-\mathbb{E}[l_t''(\pmb\kappa_0)]$. Furthermore, Lemma \ref{lemma_n3} ensures that  $\pmb{\text{F}}_0$ is positive definite and Lemma \ref{lemma_n0} shows that  $\sqrt{T}  L_T'({\pmb{\theta}}_0)\xrightarrow{d}N(\pmb{0}, \pmb{\text{F}}_0)$. Therefore, we get that 
$$\sqrt{T}(\tilde {\pmb{\kappa}}_T-{\pmb{\kappa}}_0) =  \pmb{\text{F}}_0^{-1} \sqrt{T}  L_T'({\pmb{\kappa}}_0)+o_p(1),$$
which implies $\sqrt{T}(\tilde {\pmb{\kappa}}_T-\pmb\kappa_0)\xrightarrow{d}N(\pmb{0}, \pmb{\text{F}}_0^{-1})$ as $T\rightarrow \infty$.

We conclude the proof  by showing that $\sqrt{T}(\tilde {\pmb{\kappa}}_T-\pmb\kappa_0)\xrightarrow{d}N(\pmb{0}, \pmb{\text{F}}_0^{-1})$ entails $\sqrt{T}(\hat {\pmb{\kappa}}_T-\pmb\kappa_0)\xrightarrow{d}N(\pmb{0}, \pmb{\text{F}}_0^{-1})$. A Taylor expansion yields
$$L_T'(\hat {\pmb{\kappa}}_T)=  L_T'(\tilde {\pmb{\kappa}}_T) +   L_T''(\tilde{\pmb{\eta}}_T)(\hat {\pmb{\kappa}}_T-\tilde {\pmb{\kappa}}_T),$$
where $\tilde{\pmb{\eta}}_T$ is a point between $\tilde {\pmb{\kappa}}_T$ and $\hat {\pmb{\kappa}}_T$. Furthermore, we note that   $\hat L_T'(\hat {\pmb{\kappa}}_T)=0$ and $L_T'(\tilde {\pmb{\kappa}}_T)=0$ for large enough $T$ since the estimators are strongly consistent and $\pmb{\kappa}_0\in \text{int}(\pmb{K})$. Therefore, we have that
$$\sqrt{T}\big(L_T'(\hat {\pmb{\kappa}}_T)- \hat L_T'(\hat {\pmb{\kappa}}_T)\big)=    L_T''(\tilde{\pmb{\eta}}_T)\sqrt{T}(\hat {\pmb{\kappa}}_T-\tilde {\pmb{\kappa}}_T).$$
The left hand side of the above equation goes to zero almost surely as $T\rightarrow \infty$ by an application of  Lemma \ref{lemma_n1,5}. Furthermore, Lemma \ref{lemma_n2} ensures that $L_T''(\tilde{\pmb{\eta}}_T)\xrightarrow{a.s.}-\pmb{\text{F}}_0$. Therefore, we obtain that $\sqrt{T}(\hat {\pmb{\kappa}}_T-\tilde {\pmb{\kappa}}_T)\xrightarrow{a.s.}0$. This concludes the proof of the theorem since  $$\sqrt{T}(\hat {\pmb{\kappa}}_T-  {\pmb{\kappa}}_0)= \sqrt{T}(\tilde {\pmb{\kappa}}_T-  {\pmb{\kappa}}_0)+\sqrt{T}(\hat {\pmb{\kappa}}_T-\tilde {\pmb{\kappa}}_T)=\sqrt{T}(\tilde {\pmb{\kappa}}_T-  {\pmb{\kappa}}_0)+o_p(1).$$
\end{proof}

\begin{proof}[Proof of Theorem \ref{p1}] The proof follows immediately by an application of Theorems \ref{th1} and  \ref{th2}. The contraction condition of  Theorems \ref{th1}  is immediately satisfied  if $\tau +\phi<1$. Instead,  
$$\frac{(r+1)(\alpha-1)}{r(\alpha-2)} \tau^2 +\phi^2+2\tau \phi<1,$$ 
ensures that the contraction condition of Theorem  \ref{th2} holds.
\end{proof}

\begin{proof}[Proof of Theorem \ref{th:ML}] First, we obtain the consistency result by showing that Assumptions \ref{a1}-\ref{a4} are  satisfied.   Assumptions \ref{a1} and \ref{a2} are trivially satisfied since the updating function of the  BNB-INGARCH is continuous and the parameter  set $K$ is  such that $\phi+\tau<1$. Assumption \ref{a3} is satisfied  since $\omega>0$ for any $\pmb \kappa \in K$ and $K$ is compact. Finally, Assumption \ref{a4} is trivially satisfied by the functional form of the updating function of the model. Therefore,   the ML estimator is strongly consistent by an application of Theorem \ref{th1}.

Next, we obtain the asymptotic normality by showing that Assumptions \ref{a5.0}-\ref{a6} are satisfied. Assumption \ref{a5.0} holds since $\pmb {\kappa}_0$ satisfies the contraction condition for weak stationarity. Assumption \ref{a5} holds since $1$, $y_t$ and $\lambda_t$ are linearly independent random variables. Finally, Assumption \ref{a6} is satisfied since $g_{\pmb \theta}(y,\lambda)$ is  a  linear  function of  $(y,\lambda)$  and  therefore any derivative is bounded by a linear combination of $(y,\lambda)$. As a result,  the ML estimator is asymptotically normal by an application of Theorem \ref{th2}. This concludes the proof of the theorem.
\end{proof}

\begin{proof}[Proof of Theorem \ref{SE_BNBINGAS}]  We denote the updating function of $\lambda_t$ in (\ref{BNB-GAS}) as $g(y,\lambda)= \exp\big(\omega+\phi \log \lambda +\tau s(y,\lambda) \big)$, where $s(y,\lambda)= \partial \log p(y|\lambda,r, \alpha)/ \partial \log \lambda$.
First, given the expression of the score innovation in (\ref{score}) and  inequality   (\ref{in2}), we note that  $-(r+\alpha+1) \le s(y,\lambda)\le (\alpha+1)$ for any $y\in\mathbb{N}$ and $\lambda\in\mathbb{R}^+$.  Therefore, given that $0\le \phi <1$, we immediately obtain that $\lambda_t$ is bounded from below  with probability one by the constant $\exp\big((\omega-\tau(\alpha+r+1))/(1-\phi)\big)$. Next, we apply the mean value theorem to $g(\cdot,\cdot)$ and obtain that
\begin{align*}
|g(y,\lambda)-g(y^*,\lambda^*)|\le \sup_{y,\lambda}\left|\frac{\partial g(y,\lambda)}{\partial y}\right| |y-y^*|+\sup_{y,\lambda}\left|\frac{\partial g(y,\lambda)}{\partial \lambda}\right| |\lambda-\lambda^*|.
\end{align*}
Therefore, we are only left with showing that $\sup_{\lambda,y}\left| {\partial g(y,\lambda)}/{\partial y}\right|+\sup_{\lambda,y}\left| {\partial g(y,\lambda)}/{\partial \lambda}\right|<1$ since the desired result then follows by an application of Theorem \ref{th1}. Below, we show that this is the case. In particular, we obtain that
\begin{align*}
\left|\frac{\partial g(y,\lambda)}{\partial y}\right|  =& g(y,\lambda)\tau\gamma\lambda\Big(\psi_1(\gamma\lambda+y)-\psi_1(\gamma\lambda+y+r+\alpha)\Big)\\
\le& \tau \exp\big(\omega+\tau s(y,\lambda)\big)\gamma \lambda^{1+\phi}\frac{(\alpha+r+1)}{(\gamma \lambda+y)^2}\\
\le& \tau \exp(\omega+\tau\alpha+\tau)\frac{(\alpha+r+1)}{\gamma \lambda^{1-\phi}}\\
\le& \tau \exp(\omega+\tau\alpha+\tau)\frac{(\alpha+r+1)}{\gamma \exp\big(\omega-\tau(\alpha+r+1)\big)}\\
=&  {\tau \gamma^{-1} (\alpha+r+1) \exp\big(\tau (2\alpha+r+2)\big)},
\end{align*}
where the first inequality follows by the inequality in (\ref{in3}), and the second and third inequalities follow by taking the supremum over $y$ and $\lambda$.
In a similar way, we obtain
\begin{align*}
\left|\frac{\partial g(y,\lambda)}{\partial \lambda}\right|  =& g(y,\lambda) \left(\frac{\phi}{\lambda} +\tau \left|\frac{\partial s(y,\lambda)}{\partial \lambda}\right| \right)\\
\le & \exp\big(\omega+\tau s(y,\lambda)\big)\lambda^{\phi}\left(\frac{\phi}{\lambda}+\frac{\tau |s(y,\lambda)|}{\lambda}+\tau\gamma^2\lambda\big(\psi_1(\gamma\lambda)-\psi_1(\gamma\lambda+\alpha)\big)\right)\\
& + \exp\big(\omega+\tau s(y,\lambda)\big)\lambda^{\phi}\Big(\tau \gamma^2\lambda\big(\psi_1(\gamma\lambda+y)-\psi_1(\gamma\lambda+y+r+\alpha)\big)\Big)\\
\le&  \exp(\omega+\tau \alpha+\tau)\left(\frac{\phi}{\lambda^{1-\phi}}+\frac{\tau (\alpha+r+1)}{\lambda^{1-\phi}}+\frac{\tau (\alpha+1)}{ \lambda^{1-\phi}}+\frac{\tau (\alpha+r+1) \gamma^2 \lambda^{1+\phi}}{(\gamma\lambda +y)^2}\right)\\
\le&  \exp(\omega+\tau \alpha+\tau) \frac{\phi+\tau(3\alpha+2r+3)}{\lambda^{1-\phi}}\\
\le& \big({\tau (3\alpha+2r+3)+\phi \big) \exp\big(\tau (2\alpha+r+2)\big)},
\end{align*}
where the first inequality follows by the triangle inequality, the second follows by  the inequality in (\ref{in3}), and the third and fourth  by taking the supremum over $y$ and $\lambda$.
This concludes the proof of the theorem.
\end{proof}

\subsection{Lemmas}

\begin{lemma}\label{lemma_n1.1}
Let the assumptions of Theorem \ref{th4} hold, then 
$$\|\hat \lambda_t' -\tilde \lambda_t'\|_{\Theta}\xrightarrow{e.a.s.}0\quad  \text{as} \quad t\rightarrow \infty,$$ 
where $\tilde \lambda_t'$  is the  stationary and ergodic derivative process of $\tilde \lambda_t$.
Furthermore,    $\tilde \lambda_t'$ has a uniformly bounded second moment, i.e.~$\mathbb{E}\|\tilde\lambda_t'\|_\Theta^2 <\infty$.
\end{lemma}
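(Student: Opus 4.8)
The plan is to derive a stochastic recurrence equation (SRE) for the derivative process and then apply Bougerol's theorem together with a perturbation argument, in the spirit of the invertibility proof and of \cite{SM2006}. Differentiating the filter recursion $\hat \lambda_{t+1}(\pmb\theta)=g_{\pmb\theta}(y_t,\hat\lambda_t(\pmb\theta))$ with respect to $\pmb\theta$---which is legitimate because $g_{\pmb\theta}$ is twice continuously differentiable by Assumption \ref{a6} and, for each fixed $t$, $\hat\lambda_t(\cdot)$ is a finite composition of such maps, hence $C^2$ on $\Theta$---the chain rule yields the affine SRE
\begin{align*}
\hat\lambda_{t+1}' = g_{\pmb\theta}^{\pmb\theta}(y_t,\hat\lambda_t)+g_{\pmb\theta}^{\lambda}(y_t,\hat\lambda_t)\,\hat\lambda_t',
\end{align*}
and, driven by the stationary ergodic input $\{(y_t,\tilde\lambda_t)\}$, the analogous SRE $\tilde\lambda_{t+1}'=g_{\pmb\theta}^{\pmb\theta}(y_t,\tilde\lambda_t)+g_{\pmb\theta}^{\lambda}(y_t,\tilde\lambda_t)\,\tilde\lambda_t'$. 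The key structural observation is that setting $y=y^*$ in the contraction condition (\ref{contraction}) gives $|g_{\pmb\theta}^{\lambda}(y,\lambda)|\le c_2<1$ uniformly over $\Theta$ by Assumption \ref{a2}, so the random affine maps defining both SREs are uniform contractions with Lipschitz coefficient $c_2$.

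First I would establish existence and uniqueness of the stationary limit $\tilde\lambda_t'$ by applying Theorem 3.1 of \cite{Bougerol1993} in the Banach space $C(\Theta,\mathbb{R}^n)$ equipped with $\|\cdot\|_\Theta$, exactly as in the proof of Proposition \ref{invertibility}. Condition C2 holds with contraction constant $c_2<1$ by the observation above. For condition C1, the Lipschitz continuity of $g_{\pmb\theta}^{\pmb\theta}$ in $(y,\lambda)$ from Assumption \ref{a6} yields $\|g_{\pmb\theta}^{\pmb\theta}(y_t,\tilde\lambda_t)\|_\Theta\le \|g_{\pmb\theta}^{\pmb\theta}(0,0)\|_\Theta+b_1 y_t+b_2\|\tilde\lambda_t\|_\Theta$, whose first (hence logarithmic) moment is finite since $\mathbb{E}(y_t)<\infty$ by Theorem \ref{th1} and $\mathbb{E}\|\tilde\lambda_t\|_\Theta<\infty$ by Proposition \ref{invertibility}. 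This delivers a unique stationary and ergodic $\tilde\lambda_t'$.

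Next I would prove the e.a.s.\ convergence by subtracting the two recursions. Writing $\Delta_t=\|\hat\lambda_t'-\tilde\lambda_t'\|_\Theta$ and adding and subtracting $g_{\pmb\theta}^{\lambda}(y_t,\tilde\lambda_t)\,\hat\lambda_t'$, the Lipschitz bounds of Assumption \ref{a6} (applied with $y=y^*=y_t$) together with $\|g_{\pmb\theta}^{\lambda}(y_t,\tilde\lambda_t)\|_\Theta\le c_2$ give
\begin{align*}
\Delta_{t+1}\le b_2\,\|\hat\lambda_t-\tilde\lambda_t\|_\Theta\big(1+\|\hat\lambda_t'\|_\Theta\big)+c_2\,\Delta_t.
\end{align*}
The forcing term is the product of $\|\hat\lambda_t-\tilde\lambda_t\|_\Theta$, which vanishes e.a.s.\ by Proposition \ref{invertibility}, and the factor $1+\|\hat\lambda_t'\|_\Theta$. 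The main obstacle is precisely controlling this product: I would iterate the $\hat\lambda_t'$ recursion to bound $\|\hat\lambda_t'\|_\Theta$ by a process dominated in mean by a function of $\{y_{t-i}\}$ and $\{\hat\lambda_{t-i}\}$, and then invoke the standard fact that a sequence with a finite $\log^+$ moment grows sub-exponentially, i.e.\ $t^{-1}\log^+\|\hat\lambda_t'\|_\Theta\to 0$ a.s. Consequently the whole forcing term converges e.a.s.\ to zero, and feeding an e.a.s.\ forcing term into the contractive ($c_2<1$) recursion and applying the e.a.s.\ stability lemma for perturbed SREs of \cite{SM2006} yields $\Delta_t\xrightarrow{e.a.s.}0$.

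Finally, for the uniform second-moment bound I would iterate the stationary recursion backward, obtaining $\|\tilde\lambda_t'\|_\Theta\le\sum_{i\ge0}c_2^i\,\|g_{\pmb\theta}^{\pmb\theta}(y_{t-1-i},\tilde\lambda_{t-1-i})\|_\Theta$, bound each summand by $\|g_{\pmb\theta}^{\pmb\theta}(0,0)\|_\Theta+b_1 y_{t-1-i}+b_2\|\tilde\lambda_{t-1-i}\|_\Theta$, and apply Minkowski's inequality in $L_2$. Since the assumptions of Theorem \ref{th4} include Assumption \ref{a5.0}, Theorem \ref{th2} gives $\mathbb{E}(y_t^2)<\infty$, and the bound $\|\tilde\lambda_t\|_\Theta\le\sum_{i\ge0}c_2^i(c_0+c_1 y_{t-1-i})$ from the proof of Proposition \ref{invertibility} then gives $\mathbb{E}\|\tilde\lambda_t\|_\Theta^2<\infty$ by the same Minkowski argument. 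Each summand therefore has a bounded $L_2$ norm, and the geometric weights $c_2^i$ make the series converge in $L_2$, so that $\mathbb{E}\|\tilde\lambda_t'\|_\Theta^2<\infty$, completing the proof.
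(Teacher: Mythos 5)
Your proposal is correct and follows essentially the same route as the paper: the paper packages your ``Bougerol plus perturbation'' argument as an appeal to Theorem 2.10 of \cite{SM2006} (checking its conditions S.1--S.3, which correspond exactly to your C1, C2, and the e.a.s.\ convergence of the perturbed coefficients $g_{\pmb\theta}^{\pmb\theta}(y_t,\hat\lambda_t)$ and $g_{\pmb\theta}^{\lambda}(y_t,\hat\lambda_t)$ obtained from Assumption \ref{a6} and Proposition \ref{invertibility}), while you unpack the same mechanism by hand. Your second-moment argument (backward iteration of the affine recursion, geometric weights $c_2^i$, and the $L_2$ bound on $\|g_{\pmb\theta}^{\pmb\theta}(y_t,\tilde\lambda_t)\|_\Theta$ via $\mathbb{E}(y_t^2)<\infty$ from Assumption \ref{a5.0}) is the one the paper uses.
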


\begin{proof}
We prove the convergence result $\|\hat \lambda_t' -\tilde \lambda_t'\|_{\Theta}\xrightarrow{e.a.s.}0$ by showing that the conditions S.1-S.3 of  Theorem 2.10 in \cite{SM2006}  are satisfied. In particular, the expression of the first derivative process is 
$$\tilde {\lambda}_{t+1}'({\pmb{\theta}})= g_{\pmb{\theta}t}^{{\pmb{\theta}}} (\pmb\theta)  + \  g_{\pmb{\theta}t}^\lambda(\pmb\theta) \ \tilde {\lambda}_t'({\pmb{\theta}}),$$
where $g_{\pmb{\theta}t}^{{\pmb{\theta}}}(\pmb\theta)$ and $g_{\pmb{\theta}t}^\lambda(\pmb\theta)$ are shorthand notation  for  $g_{\pmb{\theta}}^{{\pmb{\theta}}}\big(y_t, \tilde {\lambda}_t(\pmb\theta)\big)$ and  $g_{\pmb{\theta}}^{{\lambda}}\big(y_t, \tilde {\lambda}_t(\pmb\theta)\big)$, respectively.
We note that conditions S.1 and S.2 are immediately satisfied because $\|g_{\pmb{\theta}t}^\lambda\|_\Theta \le c_1<1$  a.s.~by  Assumption \ref{a2} and 
$$\mathbb{E}\|g_{\pmb{\theta}t}^{{\pmb{\theta}}}\|_\Theta \le \|g_{\pmb{\theta}}^{{\pmb{\theta}}}(0,0)\|_\Theta+ b_2 \mathbb{E}\|\tilde\lambda_t\|_{\Theta}+ b_1\mathbb{E}(y_t)< \infty,$$
by Assumption \ref{a6}. Next we show that S.3 is satisfied,  which is the equivalent of showing $\|\hat g_{\pmb{\theta}t}^{\pmb\theta}-g_{\pmb{\theta}t}^{\pmb\theta}\|_{\Theta}\xrightarrow{e.a.s.}0$ and $\|\hat g_{\pmb{\theta}t}^{\lambda}-g_{\pmb{\theta}t}^{\lambda}\|_{\Theta}\xrightarrow{e.a.s.}0$, where $\hat g_{\pmb{\theta}t}^{{\pmb{\theta}}}(\pmb\theta)$ and $\hat g_{\pmb{\theta}t}^\lambda(\pmb\theta)$ denote $g_{\pmb{\theta}}^{{\pmb{\theta}}}\big(y_t, \hat {\lambda}_t(\pmb\theta)\big)$ and  $g_{\pmb{\theta}}^{{\lambda}}\big(y_t, \hat {\lambda}_t(\pmb\theta)\big)$, respectively. By Assumption \ref{a6} together with Proposition \ref{invertibility}, we obtain 
$$\|\hat g_{\pmb{\theta}t}^{\pmb\theta}-g_{\pmb{\theta}t}^{\pmb\theta}\|_{\Theta}\le b_2\|\hat \lambda_t-\tilde \lambda_t\|_\Theta\xrightarrow{e.a.s.}0,$$
and 
$$\|\hat g_{\pmb{\theta}t}^{\lambda}-g_{\pmb{\theta}t}^{\lambda}\|_{\Theta}\le b_2\|\hat \lambda_t-\tilde \lambda_t\|_\Theta\xrightarrow{e.a.s.}0. $$
Therefore, we conclude that S.3 is satisfied.

Finally, we show that  $\mathbb{E}\|\tilde{\lambda}_{t}'\|_{\Theta}^2<\infty$. Given the  expression of $\tilde \lambda_t'$ and noticing that $\|g_{\pmb{\theta}t}^\lambda\|_\Theta\le c_2<1$  by Assumption \ref{a2}, we obtain   
\begin{align*}
\|\tilde{\lambda}_{t}'\|_{\Theta} &\le  \|g_{\pmb{\theta}t-1}^{{\pmb{\theta}}}\|_{\Theta}  + c_2  \|\tilde {\lambda}_{t-1}'\|_{\Theta}\\
&\le  \sum_{i=0}^{t-1}c_2^i \|g_{\pmb{\theta}t-i-1}^{{\pmb{\theta}}}\|_{\Theta} +  c_2^t  \|\tilde{\lambda}_{0}'\|_\Theta\\
&\le  \sum_{i=0}^{\infty}c_2^i \|g_{\pmb{\theta}t-i-1}^{{\pmb{\theta}}}\|_{\Theta} +  c_2^t  \|\tilde{\lambda}_{0}'\|_\Theta.
\end{align*}
Since $c_2<1$, for large enough $t$ we have that 
$$ \|\tilde{\lambda}_{t}'\|_{\Theta} \le 1+ \sum_{i=0}^{\infty}c_2^i \|g_{\pmb{\theta}t-i-1}^{{\pmb{\theta}}}\|_{\Theta}.$$
Therefore, by stationarity we conclude that the above inequality holds for any $t$. The desired result  $\mathbb{E}\|\tilde{\lambda}_{t}'\|_{\Theta}^2<\infty$ follows since $\mathbb{E}\|g_{\pmb{\theta}t}^{{\pmb{\theta}}}\|_\Theta^2<\infty$ holds by Assumption \ref{a6} together with $\mathbb{E}\|\tilde \lambda_t\|_{\Theta}^2<\infty$ and $\mathbb{E}(y_t^2)<\infty$, which hold true by Assumption \ref{a5}. This concludes the proof of the Lemma.
\end{proof}

\begin{lemma}\label{lemma_n1.2}
Let the assumptions of Theorem \ref{th4} hold, then 
$$\|\hat \lambda_t'' -\tilde \lambda_t''\|_{\Theta}\xrightarrow{e.a.s.}0\quad  \text{as} \quad t\rightarrow \infty,$$ 
where $\tilde \lambda_t''$ is the  stationary and ergodic second derivative processes of $\tilde \lambda_t$.
Furthermore,    $\tilde \lambda_t''$ has a uniformly bounded first moment, i.e.~$\mathbb{E}\|\tilde\lambda_t''\|_\Theta<\infty$.
\end{lemma}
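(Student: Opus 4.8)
The plan is to treat this as the exact second-order analogue of Lemma \ref{lemma_n1.1}, re-running the same Straumann--Mikosch machinery one derivative higher. First I would differentiate the first-derivative recursion $\tilde\lambda_{t+1}'=g_{\pmb\theta t}^{\pmb\theta}+g_{\pmb\theta t}^{\lambda}\,\tilde\lambda_t'$ once more with respect to $\pmb\theta$, applying the chain rule to account for the dependence of $g_{\pmb\theta t}^{\pmb\theta}$ and $g_{\pmb\theta t}^{\lambda}$ on $\tilde\lambda_t(\pmb\theta)$. This yields the affine stochastic recurrence
$$\tilde\lambda_{t+1}''=\pmb A_t+g_{\pmb\theta t}^{\lambda}\,\tilde\lambda_t'',\qquad \pmb A_t=g_{\pmb\theta t}^{\pmb\theta\pmb\theta}+g_{\pmb\theta t}^{\pmb\theta\lambda}(\tilde\lambda_t')^\top+\tilde\lambda_t'(g_{\pmb\theta t}^{\pmb\theta\lambda})^\top+g_{\pmb\theta t}^{\lambda\lambda}\,\tilde\lambda_t'(\tilde\lambda_t')^\top,$$
which has the very same contraction coefficient $g_{\pmb\theta t}^{\lambda}$, satisfying $\|g_{\pmb\theta t}^{\lambda}\|_\Theta\le c_2<1$ almost surely by Assumption \ref{a2}, and a forcing term $\pmb A_t$ built only from $y_t$, $\tilde\lambda_t$ and the first-derivative process $\tilde\lambda_t'$ already controlled in Lemma \ref{lemma_n1.1}.

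For the e.a.s.\ convergence $\|\hat\lambda_t''-\tilde\lambda_t''\|_\Theta\xrightarrow{e.a.s.}0$ I would again invoke Theorem 2.10 of \cite{SM2006}, verifying S.1--S.3. Conditions S.1 and S.2 follow from $\|g_{\pmb\theta t}^{\lambda}\|_\Theta\le c_2<1$ together with a finite log-moment of $\pmb A_t$, which holds by the linear-growth bounds implied by Assumption \ref{a6} and the moment bounds on $\tilde\lambda_t$ and $\tilde\lambda_t'$. The substance is S.3, i.e.\ $\|\hat g_{\pmb\theta t}^{\lambda}-g_{\pmb\theta t}^{\lambda}\|_\Theta\xrightarrow{e.a.s.}0$ and $\|\hat{\pmb A}_t-\pmb A_t\|_\Theta\xrightarrow{e.a.s.}0$, where hats denote evaluation at $\hat\lambda_t$ rather than $\tilde\lambda_t$. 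The coefficient and the additive $g^{\pmb\theta\pmb\theta}$ term are immediate from the Lipschitz bounds of Assumption \ref{a6} and $\|\hat\lambda_t-\tilde\lambda_t\|_\Theta\xrightarrow{e.a.s.}0$ of Proposition \ref{invertibility}. The product and rank-one terms I would split by adding and subtracting, e.g.\ $\hat\lambda_t'(\hat\lambda_t')^\top-\tilde\lambda_t'(\tilde\lambda_t')^\top=\hat\lambda_t'(\hat\lambda_t'-\tilde\lambda_t')^\top+(\hat\lambda_t'-\tilde\lambda_t')(\tilde\lambda_t')^\top$, so that each piece is a product of an e.a.s.\ vanishing factor (supplied by Lemma \ref{lemma_n1.1} for $\|\hat\lambda_t'-\tilde\lambda_t'\|_\Theta$ and by Proposition \ref{invertibility} for $\|\hat\lambda_t-\tilde\lambda_t\|_\Theta$) and a factor with a bounded moment; the standard fact that an e.a.s.\ sequence times a sequence with a bounded moment is again e.a.s.\ then closes S.3.

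The moment bound $\mathbb{E}\|\tilde\lambda_t''\|_\Theta<\infty$ I would obtain by iterating the affine recursion, exactly as the $L^2$ bound on $\tilde\lambda_t'$ was obtained in Lemma \ref{lemma_n1.1}: since $\|g_{\pmb\theta t}^{\lambda}\|_\Theta\le c_2<1$, for large $t$ one gets $\|\tilde\lambda_t''\|_\Theta\le 1+\sum_{i=0}^{\infty}c_2^{\,i}\,\|\pmb A_{t-1-i}\|_\Theta$, so by stationarity it suffices to show $\mathbb{E}\|\pmb A_t\|_\Theta<\infty$. Here I would condition on $\mathcal{F}_{t-1}$ to dispose of the current observation $y_t$ entering each derivative of $g$: since $\tilde\lambda_t'$ is $\mathcal{F}_{t-1}$-measurable and Assumption \ref{a6} gives $\mathbb{E}(\|g_{\pmb\theta t}^{d}\|_\Theta\mid\mathcal{F}_{t-1})\le C+(b_1+b_2)\|\tilde\lambda_t\|_\Theta$ (using $\mathbb{E}(y_t\mid\mathcal{F}_{t-1})=\lambda_t\le\|\tilde\lambda_t\|_\Theta$), the first three terms of $\pmb A_t$ reduce to $\mathbb{E}\|\tilde\lambda_t\|_\Theta$ and $\mathbb{E}[\|\tilde\lambda_t'\|_\Theta\|\tilde\lambda_t\|_\Theta]$, both finite by Cauchy--Schwarz together with $\mathbb{E}\|\tilde\lambda_t'\|_\Theta^2<\infty$ (Lemma \ref{lemma_n1.1}) and the second moments $\mathbb{E}\|\tilde\lambda_t\|_\Theta^2<\infty$, $\mathbb{E}(y_t^2)<\infty$ guaranteed by weak stationarity (Assumption \ref{a5.0}).

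The main obstacle is the last, quadratic term $g_{\pmb\theta t}^{\lambda\lambda}\,\tilde\lambda_t'(\tilde\lambda_t')^\top$: after conditioning it contributes the mixed moment $\mathbb{E}[\|\tilde\lambda_t\|_\Theta\|\tilde\lambda_t'\|_\Theta^{2}]$, which is genuinely third-order and is not dominated by the second moments available under Assumption \ref{a5.0} alone. The clean route is to exploit the structure of the two specifications to which the lemma is applied: for the linear BNB-INGARCH updating function $g_{\pmb\theta}^{\lambda\lambda}\equiv0$ (and $g_{\pmb\theta}^{\pmb\theta\pmb\theta}\equiv0$), so the term vanishes identically and $\mathbb{E}\|\pmb A_t\|_\Theta$ collapses to a first moment of $\tilde\lambda_t'$; while for the BNB-GAS the conditional mean $\lambda_t$, and hence $\|\tilde\lambda_t\|_\Theta$ uniformly over $\Theta$, takes values in a compact set (Remark \ref{RE_BNBINGAS}), so $g_{\pmb\theta t}^{\lambda\lambda}$ and $\|\tilde\lambda_t\|_\Theta$ are bounded and the mixed moment reduces to $\mathbb{E}\|\tilde\lambda_t'\|_\Theta^{2}<\infty$. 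This quadratic term is the step demanding the most care, as it is precisely where the interplay between the linear-growth bounds of Assumption \ref{a6} and the order of the available moments is tightest.
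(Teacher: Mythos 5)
Your overall architecture is exactly the paper's: the same affine recursion $\tilde\lambda_{t+1}''=\pmb A_t+g_{\pmb\theta t}^{\lambda}\tilde\lambda_t''$ with the same $\pmb A_t$, verification of S.1--S.3 of Theorem 2.10 of \cite{SM2006} with the product terms split by adding and subtracting, and the moment bound obtained by iterating the contraction. However, your treatment of the quadratic term $g_{\pmb\theta t}^{\lambda\lambda}\,\tilde\lambda_t'(\tilde\lambda_t')^\top$ contains a genuine error. You assert that this term forces the mixed third-order moment $\mathbb{E}[\|\tilde\lambda_t\|_\Theta\|\tilde\lambda_t'\|_\Theta^{2}]$, treating $g_{\pmb\theta}^{\lambda\lambda}$ as having linear growth in $(y,\lambda)$. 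But Assumption \ref{a6} imposes the Lipschitz bound $\|g_{\pmb\theta}^{d}(y,\lambda)-g_{\pmb\theta}^{d}(y^*,\lambda^*)\|\le b_1|y-y^*|+b_2|\lambda-\lambda^*|$ for $d=\lambda$ and $d=\pmb\theta$ as well; applied to $d=\lambda$ it says $g_{\pmb\theta}^{\lambda}$ is Lipschitz in $\lambda$ with constant $b_2$, hence $\|g_{\pmb\theta t}^{\lambda\lambda}\|_\Theta\le b_2$ \emph{uniformly} (and likewise $\|g_{\pmb\theta t}^{\pmb\theta\lambda}\|_\Theta\le b_2$). The quadratic term is therefore dominated by $b_2\|\tilde\lambda_t'\|_\Theta^{2}$, whose expectation is finite by Lemma \ref{lemma_n1.1}; only $g_{\pmb\theta t}^{\pmb\theta\pmb\theta}$ needs the linear-growth bound, and it is handled by $\mathbb{E}(y_t)<\infty$ and $\mathbb{E}\|\tilde\lambda_t\|_\Theta<\infty$. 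The obstacle you identify never arises.

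This matters because your proposed workaround --- falling back on the specific structure of the BNB-INGARCH ($g^{\lambda\lambda}\equiv 0$) and the BNB-GAS ($\lambda_t$ compactly supported) --- does not prove the lemma as stated. The lemma is asserted for a general updating function satisfying Assumptions \ref{a1}--\ref{a6}, not just for those two specifications, so a case-by-case argument restricted to the paper's examples leaves the general statement unproved. Once you read off the uniform bounds on $g^{\lambda\lambda}$ and $g^{\pmb\theta\lambda}$ from Assumption \ref{a6}, the paper's one-line estimate $\mathbb{E}\|\pmb A_t\|_\Theta\le\mathbb{E}\|g_{\pmb\theta t}^{\pmb\theta\pmb\theta}\|_\Theta+2b_2\mathbb{E}\|\tilde\lambda_t'\|_\Theta+b_2\mathbb{E}\|\tilde\lambda_t'\|_\Theta^{2}$ closes the argument in full generality, and the conditioning on $\mathcal{F}_{t-1}$ you introduce is unnecessary. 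The rest of your proof (S.1--S.3 and the e.a.s.\ convergence) is correct and coincides with the paper's.
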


\begin{proof}
As in the proof of Lemma \ref{lemma_n1.1}, the convergence result $\|\hat \lambda_t'' -\tilde \lambda_t''\|_{\Theta}\xrightarrow{e.a.s.}0$ is obtained by checking the conditions S.1-S.3  of Theorem 2.10 in \cite{SM2006}. In particular,  the expression of the second derivative process  is
\begin{align*}
{\tilde\lambda}_{t+1}''({\pmb{\theta}})=\pmb{A}_t(\pmb{\theta})+g_{\pmb{\theta}t}^{\lambda}(\pmb{\theta}) \ {\tilde\lambda}_{t}''({\pmb{\theta}}),
\end{align*}
with $\pmb{A}_t(\pmb{\theta})$ given by
$$\pmb{A}_t(\pmb{\theta}) = g_{\pmb{\theta}t}^{\pmb{\theta}\pmb{\theta}}(\pmb{\theta})+   g_{\pmb{\theta}t}^{\pmb{\theta}\lambda}(\pmb{\theta}) \ {\tilde\lambda}_t'({\pmb{\theta}})^\top+ {\tilde\lambda}_t'({\pmb{\theta}}) \ g_{\pmb{\theta}t}^{\pmb{\theta}\lambda}(\pmb{\theta})^\top+g_{\pmb{\theta}t}^{\lambda\lambda}(\pmb{\theta})\ {\tilde\lambda}_{t}'({\pmb{\theta}}){\tilde\lambda}_{t}'({\pmb{\theta}})^\top,$$
where $g_{\pmb{\theta}t}^{\pmb{\theta}\pmb{\theta}}(\pmb{\theta})$, $g_{\pmb{\theta}t}^{\pmb{\theta}\lambda}(\pmb{\theta})$ and $g_{\pmb{\theta}t}^{\lambda \lambda}(\pmb{\theta})$ denote $g_{\pmb{\theta}t}^{\pmb{\theta}\pmb{\theta}}\big(y_t,\tilde \lambda_t(\pmb{\theta})\big)$, $g_{\pmb{\theta}t}^{\pmb{\theta}\lambda}\big(y_t,\tilde \lambda_t(\pmb{\theta})\big)$ and $g_{\pmb{\theta}t}^{\lambda\lambda}\big(y_t,\tilde \lambda_t(\pmb{\theta})\big)$, respectively. First, we obtain that S.1 is satisfied by showing that $\mathbb{E}\|\pmb{A}_{t}\|_\Theta<\infty$. In particular, $\|g_{\pmb{\theta}t}^{\lambda\lambda}\|\le b_2$ a.s.~by Assumption \ref{a6}, therefore we have that
\begin{align*}
\mathbb{E}\|\pmb{A}_{t}\|_\Theta & \le \mathbb{E}\|g_{\pmb{\theta}t}^{\pmb{\theta}\pmb{\theta}}\|_\Theta+\mathbb{E} \| g_{\pmb{\theta}t}^{\pmb{\theta}\lambda} \ \tilde{\lambda}_t'^\top\|_\Theta+\mathbb{E} \| \tilde{\lambda}_t' \ g_{\pmb{\theta}t}^{\pmb{\theta}\lambda \top}\|_\Theta+\mathbb{E}\|g_{\pmb{\theta}t}^{\lambda\lambda}\ \tilde {\lambda}_{t}' \tilde{\lambda}_{t}'^\top\|_\Theta\\
&\le  \mathbb{E}\|g_{\pmb{\theta}t}^{\pmb{\theta}\pmb{\theta}}\|_\Theta+2 b_2 \mathbb{E} \|\tilde{\lambda}_t'\|_\Theta+b_2\mathbb{E}\|\tilde{\lambda}_{t}'\|_\Theta^2.
\end{align*}
The desired result follows since  $\mathbb{E}\|\tilde{\lambda}_{t}'\|_\Theta^2<\infty$ by Lemma \ref{lemma_n1.1}, and $\mathbb{E}\|g_{\pmb{\theta}t}^{\pmb{\theta}\pmb{\theta}}\|_\Theta<\infty$ holds true because, by Assumption \ref{a6}, $g_{\pmb{\theta}t}^{\pmb{\theta}\pmb{\theta}}$ is bounded by a linear combination of $y_t$ and $\tilde \lambda_t$, which have bounded moments. 

Second, we  obtain that S.2 is satisfied since  $\|g_{\pmb{\theta}t}^{\lambda}\|_{\Theta}\le c_2<1$ a.s.~by the contraction condition in Assumption \ref{a2}.

Third, we have that the condition S.3 is satisfied if 
\begin{align*}
\|\hat g_{\pmb{\theta}t}^{\pmb{\theta}\pmb{\theta}}-g_{\pmb{\theta}t}^{\pmb{\theta}\pmb{\theta}}\|_{\Theta}\xrightarrow{e.a.s.}0, \quad
\| \hat g_{\pmb{\theta}t}^{\pmb{\theta}\lambda}\ \hat {\lambda}_t'^\top- g_{\pmb{\theta}t}^{\pmb{\theta}\lambda} \ \tilde {\lambda}_t'^\top\|_{\Theta}\xrightarrow{e.a.s.}0,\quad \text{and}\\
\|\hat g_{\pmb{\theta}t}^{\lambda\lambda}\ \hat{\lambda}_{t}' \hat{\lambda}_{t}'^\top-g_{\pmb{\theta}t}^{\lambda\lambda}\ \tilde{\lambda}_{t}'\tilde{\lambda}_{t}'^\top\|_{\Theta}\xrightarrow{e.a.s.}0, \quad \text{as}\quad t\rightarrow \infty,
\end{align*}
where $\hat g_{\pmb{\theta}t}^{\pmb{\theta}\pmb{\theta}}(\pmb{\theta})$, $\hat g_{\pmb{\theta}t}^{\pmb{\theta}\lambda}(\pmb{\theta})$ and $\hat g_{\pmb{\theta}t}^{\lambda \lambda}(\pmb{\theta})$ denote $g_{\pmb{\theta}t}^{\pmb{\theta}\pmb{\theta}}\big(y_t,\hat \lambda_t(\pmb{\theta})\big)$, $g_{\pmb{\theta}t}^{\lambda\pmb{\theta}}\big(y_t,\hat \lambda_t(\pmb{\theta})\big)$ and $g_{\pmb{\theta}t}^{\lambda\lambda}\big(y_t,\hat \lambda_t(\pmb{\theta})\big)$, respectively. Note that $\|\hat g_{\pmb{\theta}t}^{\lambda}-g_{\pmb{\theta}t}^{\lambda}\|_{\Theta}\xrightarrow{e.a.s.}0$ holds true as shown in the proof of Lemma \ref{lemma_n1.1}.  By Assumption \ref{a6}, we obtain
$$\|\hat g_{\pmb{\theta}t}^{\pmb{\theta}\pmb{\theta}}-g_{\pmb{\theta}t}^{\pmb{\theta}\pmb{\theta}}\|_{\Theta}\le b_2 \|\hat \lambda_t -\tilde\lambda_t\|_{\Theta}\xrightarrow{e.a.s.}0.$$
As concerns $\| \hat g_{\pmb{\theta}t}^{\pmb{\theta}\lambda}\ \hat {\lambda}_t'^\top- g_{\pmb{\theta}t}^{\pmb{\theta}\lambda} \ \tilde {\lambda}_t'^\top\|_{\Theta}\xrightarrow{e.a.s.}0$, by Assumption \ref{a6},  we obtain that for large enough $t$
\begin{align*}
\| \hat g_{\pmb{\theta}t}^{\pmb{\theta}\lambda}\ \hat {\lambda}_t'^\top- g_{\pmb{\theta}t}^{\pmb{\theta}\lambda} \ \tilde {\lambda}_t'^\top\|_{\Theta} &\le  \|  g_{\pmb{\theta}t}^{\pmb{\theta}\lambda}(\hat {\lambda}_t'^\top-  \tilde {\lambda}_t'^\top)\|_{\Theta}+\| (\hat g_{\pmb{\theta}t}^{\pmb{\theta}\lambda}- g_{\pmb{\theta}t}^{\pmb{\theta}\lambda})\ \hat {\lambda}_t'^\top\|_{\Theta}\nonumber\\
&\le  \|  g_{\pmb{\theta}t}^{\pmb{\theta}\lambda}\|_{\Theta} \|\hat {\lambda}_t'-  \tilde {\lambda}_t'\|_{\Theta}+(\|\tilde \lambda_t'\|_{\Theta}+1)\| \hat g_{\pmb{\theta}t}^{\pmb{\theta}\lambda}- g_{\pmb{\theta}t}^{\pmb{\theta}\lambda}\|_{\Theta}\nonumber \\
&\le b_2 \|\hat {\lambda}_t'-  \tilde {\lambda}_t'\|_{\Theta}+b_2  (\|\tilde \lambda_t'\|_{\Theta}+1)\| \hat\lambda_t- \tilde\lambda_t\|_{\Theta}. 
\end{align*}
The  result follows  by an application of Lemma 2.1 of \cite{SM2006} since $\| \hat\lambda_t- \tilde\lambda_t\|_{\Theta}\xrightarrow{e.a.s.}0$ by Proposition \ref{invertibility}, $\{\|\tilde \lambda_t'\|_{\Theta}\}_{t\in\mathbb{Z}}$ is stationary and ergodic, and $\mathbb{E}\|\tilde \lambda_t'\|_{\Theta}<\infty$ by Lemma \ref{lemma_n1.1}. As concerns $\|\hat g_{\pmb{\theta}t}^{\lambda\lambda}\ \hat{\lambda}_{t}' \hat{\lambda}_{t}'^\top-g_{\pmb{\theta}t}^{\lambda\lambda}\ \tilde{\lambda}_{t}'\tilde{\lambda}_{t}'^\top\|_{\Theta}\xrightarrow{e.a.s.}0$, by Assumption \ref{a6},  we obtain that for large enough $t$
\begin{align*}
\|\hat g_{\pmb{\theta}t}^{\lambda\lambda}\ \hat{\lambda}_{t}' \hat{\lambda}_{t}'^\top-g_{\pmb{\theta}t}^{\lambda\lambda}\ \tilde{\lambda}_{t}'\tilde{\lambda}_{t}'^\top\|_{\Theta}%
&\le  \|g_{\pmb{\theta}t}^{\lambda\lambda}\ (\hat{\lambda}_{t}' \hat{\lambda}_{t}'^\top- \tilde{\lambda}_{t}'\tilde{\lambda}_{t}'^\top)\|_{\Theta}+\|(\hat g_{\pmb{\theta}t}^{\lambda\lambda}-g_{\pmb{\theta}t}^{\lambda\lambda}) \hat{\lambda}_{t}'\hat{\lambda}_{t}'^\top\|_{\Theta}\nonumber\\%
&\le  \|g_{\pmb{\theta}t}^{\lambda\lambda}\|_{\Theta} \ \|\hat{\lambda}_{t}' \hat{\lambda}_{t}'^\top- \tilde{\lambda}_{t}'\tilde{\lambda}_{t}'^\top\|_{\Theta}+(\|\tilde \lambda_t'\|_{\Theta}^2+1)\| \hat g_{\pmb{\theta}t}^{\lambda\lambda}- g_{\pmb{\theta}t}^{\lambda\lambda}\|_{\Theta}\nonumber\\%
&\le  2b_2(\|\tilde \lambda_t'\|_\Theta+1)  \|\hat{\lambda}_{t}' - \tilde{\lambda}_{t}'\|_{\Theta}+b_2 (\|\tilde \lambda_t'\|_{\Theta}^2+1)\| \hat \lambda_t- \tilde \lambda_t\|_{\Theta}.%
\end{align*}
The result follows by Lemma 2.1 of \cite{SM2006} since $\| \hat\lambda_t- \tilde\lambda_t\|_{\Theta}\xrightarrow{e.a.s.}0$ by Proposition \ref{invertibility}, $\{\|\tilde \lambda_t'\|_{\Theta}^2\}_{t\in\mathbb{Z}}$ is stationary and ergodic, and $\mathbb{E}\|\tilde \lambda_t'\|_{\Theta}^2<\infty$ by Lemma \ref{lemma_n1.1}. Therefore, we conclude that the conditions S.1-S.3  of Theorem 2.10 in \cite{SM2006} are satisfied.

Finally, we show that $\mathbb{E}\|\tilde\lambda_t''\|_\Theta<\infty$. We note that $\|g_{\pmb{\theta}t}^\lambda\|_\Theta\le c_2 < 1$ a.s.~by Assumption \ref{a2}. Therefore,  we obtain that 
\begin{align*}
\|\tilde{\lambda}_{t}''\|_{\Theta} &\le  \|\pmb{A}_{t-1}\|_{\Theta}  + c_2  \|\tilde {\lambda}_{t-1}''\|_{\Theta}\\
&\le  \sum_{i=0}^{t-1}c_2^i \|\pmb{A}_{t-i-1}\|_{\Theta} +  c_2^t  \|\tilde{\lambda}_{0}''\|_\Theta\\
&\le  \sum_{i=0}^{\infty}c_2^i \|\pmb{A}_{t-i-1}\|_{\Theta} +  c_2^t  \|\tilde{\lambda}_{0}''\|_\Theta.
\end{align*}
Since $c_2<1$, for large enough $t$ we have that 
$$ \|\tilde{\lambda}_{t}''\|_{\Theta} \le 1+ \sum_{i=0}^{\infty}c_2^i \|\pmb{A}_{t-i-1}\|_{\Theta}.$$
Therefore, by stationarity we conclude that the above inequality holds for any $t$.
The final result $\mathbb{E}\|\tilde{\lambda}_t''\|_\Theta<\infty$ follows since  $\mathbb{E}\|\pmb{A}_{t}\|_\Theta<\infty$ holds true as shown in the proof of S.1 given above. 
\end{proof}

\begin{lemma}\label{lemma_n2}
Let the assumptions of Theorem \ref{th4} hold, then the second derivative of the likelihood function has a uniformly bounded moment, i.e.~$E\|{l}_t''\|_{K}<\infty$.
\end{lemma}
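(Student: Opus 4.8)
The plan is to differentiate $l_t(\pmb{\kappa})=\log p\big(y_t\mid\tilde\lambda_t(\pmb{\theta}),r,\alpha\big)$ twice with respect to $\pmb{\kappa}=(\pmb{\xi}^\top,\pmb{\theta}^\top)^\top$, $\pmb{\xi}=(r,\alpha)^\top$, by the chain rule, and then to dominate each resulting block of the Hessian in the supremum norm over $K$ by an integrable random variable. Writing $\log p(y\mid\lambda,r,\alpha)$ for the conditional log-pmf, letting a subscript on $p$ denote a partial derivative in the corresponding slot, and writing $\tilde\lambda_t'=\partial\tilde\lambda_t/\partial\pmb{\theta}$ and $\tilde\lambda_t''=\partial^2\tilde\lambda_t/\partial\pmb{\theta}\partial\pmb{\theta}^\top$, the $\pmb{\xi}\pmb{\xi}$ block is the direct Hessian $\partial^2\log p/\partial\pmb{\xi}\partial\pmb{\xi}^\top$, the mixed $\pmb{\theta}\pmb{\xi}$ block equals $\big(\partial^2\log p/\partial\lambda\,\partial\pmb{\xi}^\top\big)\tilde\lambda_t'$, and the $\pmb{\theta}\pmb{\theta}$ block equals
\begin{align*}
\frac{\partial^2 l_t}{\partial\pmb{\theta}\partial\pmb{\theta}^\top}
=\frac{\partial\log p}{\partial\lambda}\,\tilde\lambda_t''
+\frac{\partial^2\log p}{\partial\lambda^2}\,\tilde\lambda_t'\,\tilde\lambda_t'^\top .
\end{align*}
Since $\|l_t''\|_K$ is bounded by the sum of the supremum norms of these three blocks, it suffices to control each separately.

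The core step is to bound, uniformly over the compact set $K$, the partial derivatives of $\log p$ with respect to $\lambda$, $r$ and $\alpha$. These are expressed through digamma and trigamma functions of the arguments $y_t+r$, $\gamma\lambda+y_t$, $\gamma\lambda+\alpha$ and $\gamma\lambda+y_t+\alpha+r$ with $\gamma=(\alpha-1)/r$, and I would bound them with the same digamma/trigamma inequalities already exploited in the proof of Theorem \ref{SE_BNBINGAS} together with the log-gamma bounds of Lemma \ref{lemma_gamma}. Crucially, by Assumption \ref{a3} the limit filter satisfies $\tilde\lambda_t\ge\bar c$ a.s., so all arguments are bounded away from zero on $K$; combined with the boundedness of the score, $|s_t|\le\alpha+r+1$ (whence $|\partial\log p/\partial\lambda|=|s_t|/\lambda\le(\alpha+r+1)/\bar c$) and the monotonicity of the trigamma function, this yields that $\partial^2\log p/\partial\lambda^2$ and the mixed derivatives $\partial^2\log p/\partial\lambda\,\partial\pmb{\xi}$ are bounded by constants on $K$. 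The direct $\pmb{\xi}\pmb{\xi}$ derivatives are not bounded, but since $\psi(x)\sim\log x$ and $\psi_1(x)=O(1/x)$ they grow at most like $\log(1+y_t)$ times an affine function of $\tilde\lambda_t$.

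Assembling the moment bound is then routine. The $\pmb{\theta}\pmb{\theta}$ block is dominated by $c_\lambda\|\tilde\lambda_t''\|_\Theta+C\|\tilde\lambda_t'\|_\Theta^2$, which is integrable because $\mathbb{E}\|\tilde\lambda_t''\|_\Theta<\infty$ by Lemma \ref{lemma_n1.2} and $\mathbb{E}\|\tilde\lambda_t'\|_\Theta^2<\infty$ by Lemma \ref{lemma_n1.1}. The mixed block is dominated by $C\|\tilde\lambda_t'\|_\Theta$, integrable by Lemma \ref{lemma_n1.1}. The $\pmb{\xi}\pmb{\xi}$ block is dominated by $C(1+\|\tilde\lambda_t\|_\Theta)\big(1+\log(1+y_t)\big)$, whose expectation is finite by the Cauchy–Schwarz inequality applied with the second-moment bounds $\mathbb{E}\|\tilde\lambda_t\|_\Theta^2<\infty$ and $\mathbb{E}(y_t^2)<\infty$ (which hold under Assumption \ref{a5.0}, cf.\ Theorem \ref{th2} and the proof of Lemma \ref{lemma_n1.1}), noting $\mathbb{E}\big(\log(1+y_t)\big)^2<\infty$. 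Summing the three bounds and taking expectations gives $\mathbb{E}\|l_t''\|_K<\infty$.

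The hard part will be the second step, namely the clean, $K$-uniform control of the BNB log-pmf derivatives. The delicate points are ensuring that the $r$- and $\alpha$-derivatives grow no faster than $\log(1+y_t)$ in $y_t$ (so that their products with the linear-in-$\tilde\lambda_t$ coefficients remain $L^1$ via Cauchy–Schwarz against the available second moments), and verifying that the $\lambda$-derivatives entering the $\pmb{\theta}\pmb{\theta}$ and mixed blocks are genuinely bounded, which hinges on the uniform lower bound $\tilde\lambda_t\ge\bar c$ keeping every digamma/trigamma argument away from its singularity. Once these pointwise bounds are established, the conclusion follows immediately from the moment bounds on $y_t$, $\tilde\lambda_t$, $\tilde\lambda_t'$ and $\tilde\lambda_t''$ already proved.
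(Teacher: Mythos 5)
Your proposal is correct and follows essentially the same route as the paper: the same block decomposition of the Hessian via the chain rule, constant bounds on the $\lambda$-derivatives of the log-pmf obtained from the lower bound $\tilde\lambda_t\ge\bar c$ together with the digamma/trigamma recurrences (collected in the paper as Lemma \ref{lemma100}), and the moment bounds $\mathbb{E}\|\tilde\lambda_t'\|_\Theta^2<\infty$ and $\mathbb{E}\|\tilde\lambda_t''\|_\Theta<\infty$ from Lemmas \ref{lemma_n1.1} and \ref{lemma_n1.2}. The only cosmetic difference is that the paper dominates the $\pmb{\xi}\pmb{\xi}$ entries by affine functions of $y_t$ and $\tilde\lambda_t$ separately (using $|\psi(z)|<z+1$), so that first moments suffice there, whereas your product bound $C(1+\|\tilde\lambda_t\|_\Theta)(1+\log(1+y_t))$ requires the Cauchy--Schwarz step against second moments, which are indeed available under Assumption \ref{a5.0}.
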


\begin{proof}
The second derivatives of the likelihood  is
$$l_t''(\pmb{\kappa})=
\begin{bmatrix}
   l_t^{\pmb \xi\pmb \xi}(\pmb{\kappa})        &   l_t^{{\pmb \xi} \lambda}(\pmb{\kappa})\ \tilde \lambda_t'(\pmb{\theta})^\top   \\
\tilde \lambda_t'(\pmb{\theta}) \ l_t^{{\pmb \xi} \lambda}(\pmb{\kappa})^\top       &  l_t^{\lambda \lambda}(\pmb{\kappa}) \ \tilde\lambda_t'(\pmb{\theta})\ \tilde\lambda_t'(\pmb{\theta})^\top +  l_t^\lambda (\pmb{\kappa}) \tilde\lambda_t''(\pmb{\theta})
\end{bmatrix},
$$
where $l_t^{\pmb \xi\pmb \xi}(\pmb{\kappa}) =\partial^2 l_t(\pmb{\kappa})/\partial \pmb{\xi} \partial \pmb{\xi}^\top$, $l_t^{\pmb \xi\lambda}(\pmb{\kappa}) =\partial^2 l_t(\pmb{\kappa})/\partial \pmb{\xi} \partial \lambda$, $l_t^{\lambda}(\pmb{\kappa}) =\partial l_t(\pmb{\kappa})/\partial \lambda$ and $l_t^{\lambda \lambda}(\pmb{\kappa}) =\partial^2 l_t(\pmb{\kappa})/\partial \lambda^2$.
By an application of Lemma \ref{lemma100}, we obtain that 
\begin{align*}
\mathbb{E}\|l_t''\|_{K}\le & \ \mathbb{E}\|l_t^{\pmb \xi \pmb \xi}\|_{K}+2 \  \mathbb{E}\|l_t^{{\pmb \xi} \lambda}\ \tilde \lambda_t'^\top\|_{K} + \mathbb{E} \| l_t^{\lambda \lambda} \ \tilde\lambda_t'\ \tilde\lambda_t'^\top\|_{K}+ \mathbb{E}\| l_t^\lambda \ \tilde\lambda_t''\|_{K}\\
\le & \ \mathbb{E}\|l_t^{\pmb \xi \pmb \xi}\|_{K}+2  (c_{\lambda \alpha}+c_{\lambda r}) \mathbb{E}\|\tilde \lambda_t'\|_{\Theta} + c_{\lambda\lambda}\ \mathbb{E} \|\tilde\lambda_t'\|_{\Theta}^2+c_\lambda \ \mathbb{E}\|\tilde \lambda_t''\|_{\Theta}.
\end{align*}
Therefore, $\mathbb{E}\|l_t''\|_{K}<\infty$ since $\mathbb{E} \|\tilde\lambda_t'\|_{\Theta}^2<\infty$ and $\mathbb{E}\|\tilde \lambda_t''\|_{\Theta}<\infty$ by Lemmas \ref{lemma_n1.1} and \ref{lemma_n1.2}, and $\mathbb{E}\|l_t^{\pmb \xi \pmb \xi}\|_{K}<\infty$  by Lemma \ref{lemma100},  because all entries of $l_t^{\pmb \xi \pmb \xi}$ are bounded by linear combinations of $y_t$ and $\tilde \lambda_t$, which have bounded moments.
\end{proof}

\begin{lemma}\label{lemma_n0}
Let the assumptions of Theorem \ref{th4} hold, then 
$$\sqrt{T} {L}_T'({\pmb{\kappa}}_0)\xrightarrow{d}N(\textbf{0}, \textbf{F}_0), \quad T\rightarrow \infty.$$
\end{lemma}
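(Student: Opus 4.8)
The plan is to show that the score sequence $\{l_t'(\pmb{\kappa}_0)\}_{t\in\mathbb{Z}}$ is a stationary, ergodic, square-integrable martingale difference sequence with respect to $\{\mathcal{F}_t\}$, and then to invoke a central limit theorem for such sequences, identifying the limiting covariance with $\pmb{\text{F}}_0$ via the information matrix equality. First I would record that, by Proposition \ref{invertibility}, the limit filter satisfies $\tilde\lambda_t(\pmb\theta_0)=\lambda_t$ with probability one, so that $l_t(\pmb{\kappa}_0)=\log p(y_t\mid\lambda_t,r_0,\alpha_0)$ is exactly the true conditional log-pmf of $y_t$ given $\mathcal{F}_{t-1}$. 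Since $\tilde\lambda_t$, its derivative $\tilde\lambda_t'$, and $y_t$ are all stationary and ergodic (Proposition \ref{invertibility} and Lemma \ref{lemma_n1.1}), the score $l_t'(\pmb{\kappa}_0)$, being a measurable function of these quantities, inherits stationarity and ergodicity.

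For the martingale property I would use the chain-rule structure already exploited in Lemma \ref{lemma_n2}: the $\pmb\xi$-block of $l_t'$ is the direct derivative of $\log p$ with respect to $(r,\alpha)$, while the $\pmb\theta$-block is $\partial_\lambda\log p$ times the $\mathcal{F}_{t-1}$-measurable factor $\tilde\lambda_t'(\pmb\theta_0)$. Hence the only randomness not measurable with respect to $\mathcal{F}_{t-1}$ enters through $y_t$, and it suffices to establish the first Bartlett identity $\mathbb{E}[\partial_\lambda\log p(y_t\mid\lambda_t,r_0,\alpha_0)\mid\mathcal{F}_{t-1}]=0$ together with its analogues for $r$ and $\alpha$. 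These follow by differentiating $\sum_{y\in\mathbb{N}}p(y\mid\lambda,r,\alpha)=1$ under the summation sign, and pulling the $\mathcal{F}_{t-1}$-measurable factor $\tilde\lambda_t'$ out of the conditional expectation then yields $\mathbb{E}[l_t'(\pmb{\kappa}_0)\mid\mathcal{F}_{t-1}]=\pmb 0$.

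Next I would bound $\mathbb{E}\|l_t'(\pmb{\kappa}_0)\|^2$. Decomposing $l_t'$ by the chain rule and using Lemma \ref{lemma100} to control the derivatives of $\log p$ by linear functions of $y_t$ and $\tilde\lambda_t$, together with $\mathbb{E}\|\tilde\lambda_t'\|_\Theta^2<\infty$ from Lemma \ref{lemma_n1.1} and $\mathbb{E}(y_t^2)<\infty$ from Assumption \ref{a5.0}, gives $\mathbb{E}\|l_t'(\pmb{\kappa}_0)\|^2<\infty$. With a stationary, ergodic, square-integrable martingale difference sequence in hand, the central limit theorem for such sequences (e.g.\ Billingsley, 1961) delivers
\begin{equation*}
\sqrt{T}\,L_T'(\pmb{\kappa}_0)=\frac{1}{\sqrt{T}}\sum_{t=1}^T l_t'(\pmb{\kappa}_0)\xrightarrow{d}N\big(\pmb{0},\pmb\Sigma\big),\qquad \pmb\Sigma=\mathbb{E}\big[l_t'(\pmb{\kappa}_0)\,l_t'(\pmb{\kappa}_0)^\top\big].
\end{equation*}
I would then identify $\pmb\Sigma$ with $\pmb{\text{F}}_0$ through the second Bartlett identity: differentiating $\sum_y p=1$ twice and taking conditional expectations gives $\mathbb{E}[l_t'(\pmb{\kappa}_0)\,l_t'(\pmb{\kappa}_0)^\top\mid\mathcal{F}_{t-1}]=-\mathbb{E}[l_t''(\pmb{\kappa}_0)\mid\mathcal{F}_{t-1}]$, and taking unconditional expectations yields $\pmb\Sigma=-\mathbb{E}[l_t''(\pmb{\kappa}_0)]=\pmb{\text{F}}_0$.

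The delicate point, and the main obstacle, is the rigorous justification of the two Bartlett identities, since the BNB has infinite support $\mathbb{N}$ and one must interchange differentiation up to second order, in $\lambda$, $r$ and $\alpha$, with the infinite sum. This requires a dominated-convergence argument in which the summands and their first two derivatives are bounded, uniformly in a neighborhood of $\pmb{\kappa}_0$, by summable envelopes. The gamma- and beta-function bounds underlying Lemma \ref{lemma100}, combined with the finiteness of the conditional moments guaranteed by $\alpha_0>2$, are precisely what make this interchange legitimate; once these identities and the second-moment bound are secured, the remaining steps are routine.
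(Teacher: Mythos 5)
Your proposal is correct and follows essentially the same route as the paper's proof: establish that $\{l_t'(\pmb{\kappa}_0)\}$ is a square-integrable martingale difference sequence (using the block structure of the score, the $\mathcal{F}_{t-1}$-measurability of $\tilde\lambda_t'$, and the bounds from Lemma \ref{lemma100} and Lemma \ref{lemma_n1.1}), apply the martingale CLT of Billingsley, and identify the limiting covariance with $\pmb{\text{F}}_0$ via the information matrix equality. Your discussion of the dominated-convergence justification for interchanging differentiation with the infinite sum is in fact more explicit than the paper, which dispatches this point as a standard argument.
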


\begin{proof}
The  derivative of the  likelihood  function $l_t(\pmb\kappa)$ is 
$$l_t'(\pmb{\kappa})=
\begin{bmatrix}
    l_t^{\pmb \xi}(\pmb{\kappa})\vspace{0.15cm}\\
    l_t^\lambda(\pmb{\kappa})\tilde\lambda_t'(\pmb{\theta})
\end{bmatrix},
$$
where $ l_t^{\pmb \xi}(\pmb{\kappa}) = \partial l_t(\pmb\kappa)/\partial \pmb\xi$ and $ l_t^\lambda(\pmb{\kappa})=\partial l_t(\pmb\kappa)/\partial \tilde \lambda_t(\pmb\theta)$.
First, we obtain that the derivative of the likelihood has a uniformly bounded second moment, $\mathbb{E}\| {l}_{t}'\|_K^2<\infty$, as follows
\begin{align*}
\mathbb{E}\| {l}_{t}'\|_K^2 & \le \mathbb{E}[(\| l_t^{\pmb \xi} \|_K
+  c_\lambda   \| \tilde\lambda_t' \|_\Theta)^2]\\
 & \le \mathbb{E}\| l_t^{\pmb \xi} \|_K^2+ c_\lambda^2 \mathbb{E}\| \tilde\lambda_t' \|_\Theta^2+ 2c_\lambda (\mathbb{E}\| \tilde\lambda_t' \|_\Theta^2\mathbb{E}\| l_t^{\pmb \xi} \|_K^2)^{1/2}<\infty,
\end{align*} 
where the inequalities hold  by Lemma \ref{lemma100} together with the Cauchy-Schwartz inequality.
Second, we note that $\mathbb{E}[l_t'(\pmb{\kappa}_0)|\mathcal{F}_{t-1}]=0$ with probability 1. In particular, $\lambda_t'$ is $\mathcal{F}_{t-1}$ measurable and therefore 
$$\mathbb{E}[l_t'(\pmb{\kappa}_0)|\mathcal{F}_{t-1}]=
\begin{bmatrix}
     \mathbb{E}[l_t^{\pmb \xi}(\pmb{\kappa}_0)|\mathcal{F}_{t-1}]\vspace{0.15cm}\\
    \mathbb{E}[l_t^\lambda(\pmb{\kappa}_0)|\mathcal{F}_{t-1}] \  \lambda_t'
\end{bmatrix}.$$
Finally, $\mathbb{E}[l_t^{\pmb \xi}(\pmb{\kappa}_0)|\mathcal{F}_{t-1}]$ and $\mathbb{E}[l_t^\lambda(\pmb{\kappa}_0)|\mathcal{F}_{t-1}]$ are equal to zero a.s.~since they are the conditional scores of the BNB pmf evaluated at the true parameter vector  $(\lambda_t,r_0, \alpha_0)$.

Therefore, we have that  $\{{l}_{t}'(\pmb{\kappa}_0)\}_{t\in\mathbb{N}}$ is a martingale difference sequence with finite second moment. As a result, we conclude that 
$$\sqrt{T} {L}_T'({\pmb{\theta}}_0)\xrightarrow{d}N\left(\textbf{0}, \mathbb{E}[l_t'(\pmb{\kappa}_0)l_t'(\pmb{\kappa}_0)^\top]\right),\quad \text{as} \quad T\rightarrow \infty,$$ 
by an application of the Central Limit Theorem for martingale difference sequences, see \cite{billingsley}.

Finally, we note that the Fisher information matrix equality $\pmb{\text{F}}_0=-\mathbb{E}[l_t''(\pmb{\kappa}_0)]=\mathbb{E}[l_t'(\pmb{\kappa}_0)l_t'(\pmb{\kappa}_0)^\top]$ follows by standard arguments since $l_t(\pmb \kappa_0)$ is the true conditional log pmf evaluated at $y_t$ and the likelihood function is twice continuously differentiable with a uniformly bounded moment, which allow us to interchange integration with differentiation.
\end{proof}

\begin{lemma}\label{lemma_n3}
Let the assumptions of Theorem \ref{th4} hold, then the Fisher information matrix is positive definite, i.e.~$\pmb{\text{F}}_0=-\mathbb{E}[l_t''({\pmb{\kappa}}_0)]=\mathbb{E}[l_t'({\pmb{\kappa}}_0)l_t'({\pmb{\kappa}}_0)^\top]>0$.
\end{lemma}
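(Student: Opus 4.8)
The plan is to build on the Fisher information equality $\pmb{\text F}_0 = \mathbb{E}[l_t'(\pmb\kappa_0)\, l_t'(\pmb\kappa_0)^\top]$ already established in Lemma \ref{lemma_n0}. Written as an expected outer product, $\pmb{\text F}_0$ is automatically positive semidefinite, so positive definiteness fails only in the degenerate case where $\pmb v^\top \pmb{\text F}_0 \pmb v = 0$ for some $\pmb v = (\pmb v_{\pmb\xi}^\top,\pmb v_{\pmb\theta}^\top)^\top \neq \pmb 0$. Since $\pmb v^\top \pmb{\text F}_0 \pmb v = \mathbb{E}\big[(\pmb v^\top l_t'(\pmb\kappa_0))^2\big]$, it suffices to show that
\begin{align*}
\pmb v_{\pmb\xi}^\top l_t^{\pmb\xi}(\pmb\kappa_0) + \big(\pmb v_{\pmb\theta}^\top \tilde\lambda_t'(\pmb\theta_0)\big)\, l_t^{\lambda}(\pmb\kappa_0) = 0 \quad \text{a.s.} \qquad \Longrightarrow \qquad \pmb v = \pmb 0,
\end{align*}
using the block structure of the score derived in Lemma \ref{lemma_n0}.

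First I would condition on $\mathcal{F}_{t-1}$. Both $\lambda_t = \tilde\lambda_t(\pmb\theta_0)$ and $\tilde\lambda_t'(\pmb\theta_0)$ are $\mathcal{F}_{t-1}$-measurable, so $a_t := \pmb v_{\pmb\theta}^\top \tilde\lambda_t'(\pmb\theta_0)$ is a constant given $\mathcal{F}_{t-1}$, and the displayed identity becomes, conditionally, a linear combination of the scores of the BNB pmf with respect to $(r,\alpha,\lambda)$ evaluated at the true value $(\lambda_t,r_0,\alpha_0)$ that must vanish for every point $y$ of the support $\mathbb{N}$. The crux is then to show that the three functions $y\mapsto \partial_r \log p$, $y\mapsto \partial_\alpha \log p$ and $y\mapsto \partial_\lambda \log p$ are linearly independent on $\mathbb{N}$; I would establish this by writing the scores explicitly in terms of digamma functions and comparing their asymptotic expansions as $y\to\infty$, where the three parameters enter the $\psi$-arguments at different orders, which forces all coefficients to vanish. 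This yields $\pmb v_{\pmb\xi} = \pmb 0$ and $a_t = \pmb v_{\pmb\theta}^\top \tilde\lambda_t'(\pmb\theta_0) = 0$ almost surely.

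It then remains to pass from $a_t = 0$ to $\pmb v_{\pmb\theta} = \pmb 0$. Here I would invoke the recursion for the derivative process from Lemma \ref{lemma_n1.1}, $\tilde\lambda_{t+1}'(\pmb\theta_0) = g_{\pmb\theta t}^{\pmb\theta}(\pmb\theta_0) + g_{\pmb\theta t}^{\lambda}(\pmb\theta_0)\,\tilde\lambda_t'(\pmb\theta_0)$. Left-multiplying by $\pmb v_{\pmb\theta}^\top$ gives $a_{t+1} = \pmb v_{\pmb\theta}^\top g_{\pmb\theta t}^{\pmb\theta}(\pmb\theta_0) + g_{\pmb\theta t}^{\lambda}(\pmb\theta_0)\, a_t$; since $a_t = a_{t+1} = 0$ almost surely by stationarity, this forces $\pmb v_{\pmb\theta}^\top \frac{\partial g_{\pmb\theta}}{\partial\pmb\theta}(y_t,\lambda_t)\big|_{\pmb\theta_0} = 0$ almost surely, and Assumption \ref{a5} (linear independence of the components of that vector) gives $\pmb v_{\pmb\theta} = \pmb 0$. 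Together with $\pmb v_{\pmb\xi} = \pmb 0$ this yields $\pmb v = \pmb 0$, and hence $\pmb{\text F}_0 > 0$.

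I expect the main obstacle to be the linear-independence-of-scores step in the second paragraph: unlike a regular exponential family, the BNB scores are combinations of digamma terms, so separating the three coefficients cannot be delegated to a generic exponential-family argument and instead requires a careful asymptotic (or finitely-many-point) analysis of the $\psi$-expansions. Everything else reduces to the conditioning argument and a direct application of the derivative recursion together with Assumption \ref{a5}.
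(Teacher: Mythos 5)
Your proposal is correct and follows essentially the same route as the paper's proof: both write $\pmb{\text F}_0$ as an expected outer product, reduce positive definiteness to showing that $\pmb{x}^\top l_t'(\pmb\kappa_0)=0$ a.s.\ forces $\pmb{x}=\pmb 0$, separate the $\pmb\xi$- and $\pmb\theta$-blocks using the $\mathcal{F}_{t-1}$-measurability of $\tilde\lambda_t'$ together with linear independence of the conditional BNB scores as functions of $y$, and invoke Assumption \ref{a5} for the $\pmb\theta$-block. The only differences are organizational: your single conditioning argument unifies the paper's three-case analysis, and you make explicit the recursion step $\tilde\lambda_{t+1}'=g^{\pmb\theta}_{\pmb\theta t}+g^{\lambda}_{\pmb\theta t}\tilde\lambda_t'$ needed to pass from $\pmb v_{\pmb\theta}^\top\tilde\lambda_t'=0$ to Assumption \ref{a5} (a step the paper leaves implicit), while, like the paper, you assert rather than carry out the linear-independence verification for the digamma-based scores.
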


\begin{proof}
	First, we note that obviously $\pmb{\text{F}}_0=\mathbb{E}[l_t'({\pmb{\kappa}}_0)l_t'({\pmb{\kappa}}_0)^\top]$ is positive semi-definite. Therefore, we only need to show that $\pmb{\text{F}}_0$ is non-singular, i.e.~$\pmb{\text{x}}^\top  \pmb{\text{F}}_0 \ \pmb{\text{x}} =0$  only if $\pmb{\text{x}}=\pmb{\text{0}}$. This is the equivalent of showing that $\pmb{\text{x}}^\top l_t'({\pmb{\kappa}}_0)=0$ a.s.~only if $\pmb{\text{x}}=\pmb{\text{0}}$. Consider the partition $\pmb{\text{x}}=(\pmb{\text{v}},\pmb{\text{w}})^\top$, where $\pmb{\text{v}}\in \mathbb{R}^2$ and $\pmb{\text{w}}\in \mathbb{R}^{k-2}$. We have that
$$\pmb{\text{x}}^\top l_t'({\pmb{\kappa}}_0)=\pmb{\text{v}}^\top\begin{bmatrix}
    \frac{\partial l_t({\pmb{\kappa}}_0)}{\partial r}\vspace{0.15cm}\\
    \frac{\partial l_t({\pmb{\kappa}}_0)}{\partial \alpha}
\end{bmatrix}+\pmb{\text{w}}^\top\frac{\partial l_t({\pmb{\kappa}}_0)}{\partial \pmb{\theta}}.$$
In the following, we show by contradiction that $\pmb{\text{x}}^\top l_t'({\pmb{\kappa}}_0)=0$ a.s.~implies $\pmb{\text{x}}=\pmb{\text{0}}$. There are 3 different cases.

$\pmb{\text{v}} \neq \pmb{0}, \pmb{\text{w}} = \pmb{0}$) This would mean that
$$\pmb{\text{v}}^\top\begin{bmatrix}
    \frac{\partial l_t({\pmb{\kappa}}_0)}{\partial r}\vspace{0.15cm}\\
    \frac{\partial l_t({\pmb{\kappa}}_0)}{\partial \alpha}
\end{bmatrix}= 0\quad \text{a.s.},$$
However, it is trivial to see that $ \frac{\partial l_t({\pmb{\kappa}}_0)}{\partial r}$ and $ \frac{\partial l_t({\pmb{\kappa}}_0)}{\partial \alpha}$ are linearly independent random variables and therefore the above equation cannot be true if $\pmb{\text{v}} \neq \pmb{0}$.

$\pmb{\text{v}} = \pmb{0}, \pmb{\text{w}}\neq \pmb{0}$) This would  mean that
$$\pmb{\text{w}}^\top \frac{\partial l_t({\pmb{\kappa}}_0)}{\partial \pmb{\theta}}  =\pmb{\text{w}}^\top \ \frac{\partial l_t({\pmb{\kappa}}_0)}{\partial \tilde\lambda_t(\pmb{\theta})} \frac{\partial \tilde\lambda_t(\pmb{\theta}_0)}{\partial \pmb{\theta}} = 0 \quad  \text{a.s.},$$
which would imply that $\pmb{\text{w}}^\top \frac{\partial \tilde \lambda_t(\pmb{\theta}_0)}{\partial \pmb{\theta}} = 0$. However, this cannot be true because  $\pmb{\text{w}}^\top\frac{\partial \tilde \lambda_t(\pmb{\theta}_0)}{\partial \pmb{\theta}} = 0$ a.s.~is ruled out by Assumption \ref{a5}.

$\pmb{\text{v}}\neq \pmb{0}, \pmb{\text{w}}\neq \pmb{0}$) This would imply that 
$$\pmb{\text{v}}^\top\begin{bmatrix}
    \frac{\partial l_t({\pmb{\kappa}}_0)}{\partial r}\vspace{0.15cm}\\
    \frac{\partial l_t({\pmb{\kappa}}_0)}{\partial \alpha}
\end{bmatrix}\left(\frac{\partial l_t({\pmb{\kappa}}_0)}{\partial \lambda_t }\right)^{-1} = \pmb{\text{w}}^\top \frac{\partial  \tilde \lambda_t(\pmb{\theta}_0)}{\partial \pmb{\theta}}  \quad  \text{a.s.}$$
However, this cannot be true because $\frac{\partial  \tilde \lambda_t}{\partial \pmb{\theta}}$ is measurable with respect to $\mathcal{F}_{t-1}$  and instead the left hand side of the above equation is not $\mathcal{F}_{t-1}$-measurable as it depends on $y_t$. This concludes the proof of the Lemma.
\end{proof}

\begin{lemma}\label{lemma_n1,5}
Let the assumptions of Theorem \ref{th4} hold, then 
$$\sqrt{T} \ \|\hat L_T' - L_T'\|_{K}\xrightarrow{a.s.}0, \quad \text{as} \quad T\rightarrow \infty.$$
\end{lemma}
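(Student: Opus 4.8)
The plan is to reduce the statement to an exponentially-fast almost-sure convergence of the per-period score differences and then exploit the $T^{-1/2}$ normalization. Writing $\hat L_T'(\pmb\kappa)=\frac{1}{T}\sum_{t=1}^T\hat l_t'(\pmb\kappa)$ and $L_T'(\pmb\kappa)=\frac{1}{T}\sum_{t=1}^T l_t'(\pmb\kappa)$, the triangle inequality gives
$$\sqrt{T}\,\|\hat L_T' - L_T'\|_K \le \frac{1}{\sqrt{T}}\sum_{t=1}^T\|\hat l_t'-l_t'\|_K.$$
Hence it suffices to prove $\|\hat l_t'-l_t'\|_K\xrightarrow{e.a.s.}0$: by definition this means $c^t\|\hat l_t'-l_t'\|_K\xrightarrow{a.s.}0$ for some $c>1$, so $\sum_{t=1}^\infty\|\hat l_t'-l_t'\|_K<\infty$ almost surely, and dividing the partial sums by $\sqrt{T}$ sends the right-hand side to zero.

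To obtain the e.a.s.~convergence I would split the gradient into its $\pmb\xi=(r,\alpha)$ block and its $\pmb\theta$ block, following the expression for $l_t'$ recorded in the proof of Lemma \ref{lemma_n0}. For the $\pmb\xi$ block, $\hat l_t^{\pmb\xi}$ and $l_t^{\pmb\xi}$ are the same derivative of the BNB log-pmf in $(r,\alpha)$ evaluated at $\hat\lambda_t$ and $\tilde\lambda_t$, so the mean value theorem together with the uniform derivative bounds of Lemma \ref{lemma100} (valid since $\hat\lambda_t,\tilde\lambda_t\ge\bar c$ by Assumption \ref{a3}) yields $\|\hat l_t^{\pmb\xi}-l_t^{\pmb\xi}\|_K\le (c_{\lambda r}+c_{\lambda\alpha})\|\hat\lambda_t-\tilde\lambda_t\|_\Theta$, which is e.a.s.~null by Proposition \ref{invertibility}. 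For the $\pmb\theta$ block I would use the decomposition
$$\hat l_t^{\lambda}\hat\lambda_t'-l_t^\lambda\tilde\lambda_t' = \hat l_t^\lambda\big(\hat\lambda_t'-\tilde\lambda_t'\big)+\big(\hat l_t^\lambda-l_t^\lambda\big)\tilde\lambda_t',$$
whose first term is bounded in $\|\cdot\|_K$ by $c_\lambda\|\hat\lambda_t'-\tilde\lambda_t'\|_\Theta$ (e.a.s.~null by Lemma \ref{lemma_n1.1}) and whose second term is bounded by $c_{\lambda\lambda}\|\hat\lambda_t-\tilde\lambda_t\|_\Theta\,\|\tilde\lambda_t'\|_\Theta$.

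The main obstacle is precisely this last cross term, which multiplies the e.a.s.-vanishing factor $\|\hat\lambda_t-\tilde\lambda_t\|_\Theta$ by the stationary but unbounded factor $\|\tilde\lambda_t'\|_\Theta$, so one cannot simply pull out a supremum. Here I would appeal to Lemma 2.1 of \cite{SM2006}: since $\{\|\tilde\lambda_t'\|_\Theta\}_{t\in\mathbb{Z}}$ is stationary and ergodic with $\mathbb{E}\|\tilde\lambda_t'\|_\Theta<\infty$ (a consequence of the second-moment bound in Lemma \ref{lemma_n1.1}), the product of this sequence with an e.a.s.-null sequence is again e.a.s.~null. Assembling the $\pmb\xi$ and $\pmb\theta$ blocks then gives $\|\hat l_t'-l_t'\|_K\xrightarrow{e.a.s.}0$, which by the first paragraph completes the argument. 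Beyond this cross term the work is routine bookkeeping of the uniform derivative bounds from Lemma \ref{lemma100} and verification of the integrability hypothesis of Lemma 2.1 of \cite{SM2006}, both already supplied by the preceding lemmas.
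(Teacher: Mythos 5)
Your proposal is correct and follows essentially the same route as the paper: bound $\sqrt{T}\,\|\hat L_T'-L_T'\|_K$ by the normalized sum of $\|\hat l_t'-l_t'\|_K$, establish e.a.s.~convergence of the latter via the mean value theorem, the uniform derivative bounds of Lemma \ref{lemma100}, Proposition \ref{invertibility}, Lemma \ref{lemma_n1.1}, and Lemma 2.1 of \cite{SM2006} for the cross term involving $\|\tilde\lambda_t'\|_\Theta$. The only cosmetic difference is which factor you attach $\hat\lambda_t'$ versus $\tilde\lambda_t'$ to in the add-and-subtract step, which does not change the argument.
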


\begin{proof}

By the mean value theorem and given the upper bounds in Lemma \ref{lemma100}, we obtain that
\begin{align*}
\|\hat l_t' -l_t'\|_K &\le\| \hat l_t^{\pmb \xi} - l_t^{\pmb \xi}\|_K + \| \hat l_t^\lambda \ \hat \lambda_t'- l_t^\lambda \ \tilde \lambda_t'\|_K \\
&\le\| \hat l_t^{\pmb \xi} - l_t^{\pmb \xi}\|_K + \|  l_t^\lambda (\hat \lambda_t'-\tilde \lambda_t')\|_K+ \| (\hat l_t^\lambda - l_t^\lambda) \hat \lambda_t'\|_K\\
&\le (c_{\lambda\alpha}+c_{\lambda r}+a_{\lambda \alpha}y_t)\| \hat \lambda_t  -\tilde \lambda_t\|_\Theta + c_\lambda \ \| \hat \lambda_t'-\tilde \lambda_t'\|_\Theta+ c_{\lambda\lambda}\ (1+\|\tilde \lambda_t'\|_\Theta) \| \hat \lambda_t  -\tilde \lambda_t\|_\Theta.
\end{align*}
Therefore, $\|\hat l_t' -l_t'\|_K\xrightarrow{e.a.s.}0$ by an application of Lemma 2.1 of \cite{SM2006} since   $\mathbb{E}\|\tilde\lambda_t'\|_{\Theta}<\infty$. 
Finally, we obtain  that 
$$\lim_{T\rightarrow \infty }T \ \|\hat L_T' - L_T'\|_{K} \le \sum_{i=1}^T\|\hat l_t' -l_t'\|_K <\infty \quad \text{a.s.}$$
This concludes the proof of the lemma.
\end{proof}

\subsection{Lemmas on the gamma function and the BNB pmf} 

\begin{lemma}\label{lemma1}
Let $X\sim\mathcal{BNB}(\lambda_1,r, \alpha)$ and $Y\sim\mathcal{BNB}(\lambda_2,r, \alpha)$ with $\lambda_1\ge \lambda_2$. Then  $X$ is stochastically greater than $Y$, $X\ge_{st}Y$,
$$F_{\lambda_1}(z)\le F_{\lambda_2}(z), \quad \text{for any}\; z \in \mathbb{N},$$
where $F_{\lambda_1}$ and $ F_{\lambda_2}$ denote the cumulative distribution functions of $X$ and $Y$, respectively.
\end{lemma}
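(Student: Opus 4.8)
The plan is to establish the stronger monotone likelihood ratio (MLR) ordering and then invoke the standard implication that MLR dominance entails stochastic dominance. Writing $\beta_i = (\alpha-1)\lambda_i/r$, the hypothesis $\lambda_1 \ge \lambda_2$ together with $\alpha>1$ gives $\beta_1 \ge \beta_2$. I would then show that the likelihood ratio $y \mapsto \mathbb{P}(X=y)/\mathbb{P}(Y=y)$ is non-decreasing on $\mathbb{N}$. Forming the ratio of the two pmfs in (\ref{par}), the common factor $\Gamma(y+r)/(\Gamma(y+1)\Gamma(r))$ cancels and the normalizing constants collect into a $y$-free factor $B(\alpha,\beta_2)/B(\alpha,\beta_1)$, so that
$$\frac{\mathbb{P}(X=y)}{\mathbb{P}(Y=y)} = \frac{B(\alpha,\beta_2)}{B(\alpha,\beta_1)}\, R(y), \qquad R(y) = \frac{B(\alpha+r,\beta_1+y)}{B(\alpha+r,\beta_2+y)} = \frac{\Gamma(\beta_1+y)\,\Gamma(\alpha+r+\beta_2+y)}{\Gamma(\beta_2+y)\,\Gamma(\alpha+r+\beta_1+y)}.$$
Monotonicity of the full ratio in $y$ is therefore governed entirely by $R(y)$.

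The key step is to control the ratio of consecutive terms. Using $\Gamma(x+1)=x\Gamma(x)$, each gamma argument advances by one as $y\to y+1$, giving
$$\frac{R(y+1)}{R(y)} = \frac{(\beta_1+y)(\alpha+r+\beta_2+y)}{(\beta_2+y)(\alpha+r+\beta_1+y)}.$$
Setting $a=\beta_1+y \ge b=\beta_2+y$ and $c=\alpha+r>0$, cross-multiplication reduces the inequality $R(y+1)\ge R(y)$ to $a(c+b)-b(c+a)=c(a-b)\ge 0$, which holds since $a\ge b$. Hence $R$ is non-decreasing, so the likelihood ratio is non-decreasing and $X$ dominates $Y$ in the MLR order.

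Finally, I would conclude via the standard fact that MLR dominance implies the usual stochastic order: the pmf pair being totally positive of order two ($\mathrm{TP}_2$) yields $\mathbb{P}(X>z)\ge \mathbb{P}(Y>z)$ for every $z$, i.e.~$F_{\lambda_1}(z)\le F_{\lambda_2}(z)$, which is the claim. The whole argument is elementary, so there is no serious obstacle; the only points requiring care are the bookkeeping of which gamma arguments increment when $y\to y+1$, and checking that every $y$-free factor (in particular $B(\alpha,\beta_2)/B(\alpha,\beta_1)$) genuinely drops out, so that monotonicity of $R$ alone controls the ordering. As an alternative conceptual route one could instead couple the mixture representation directly—using the Gamma representation of the beta draw to obtain $P^{(1)}\le P^{(2)}$ a.s.\ for $\beta_1\ge\beta_2$, and a common sequence of uniforms to make the number of failures in $\mathcal{NB}(r,\cdot)$ pathwise decreasing in the success probability—but the MLR computation above is shorter and entirely self-contained.
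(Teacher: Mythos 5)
Your proof is correct and follows essentially the same route as the paper: both establish the likelihood ratio ordering in the mean parameter and then invoke the standard implication that MLR dominance yields the usual stochastic order. The only difference is that the paper outsources the MLR step to Theorem 4.2 of \cite{wang2011one}, whereas you verify it directly via the telescoping ratio $R(y+1)/R(y)$, which is a clean and self-contained substitute for that citation.
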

\begin{proof}
The proof of this lemma is an immediate consequence of Theorem 4.2~of \cite{wang2011one} since likelihood ratio ordering implies stochastic ordering.
\end{proof}

\begin{lemma}\label{lemma_gamma}
Let $x_t$ be a random variable such that $x_t\ge\bar x$ with probability 1  for some constant $\bar x>0$. Furthermore, assume that $\mathbb{E}(x_t)<\infty$, then
$$\mathbb{E}|\log\Gamma(x_t+c) - \log\Gamma(x_t)|<\infty,$$
for any $c\in \mathbb{R}^+$.
\end{lemma}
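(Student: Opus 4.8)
The plan is to reduce the problem to a linear bound via the mean value theorem. Since $\log\Gamma$ is continuously differentiable on $(0,\infty)$ with derivative equal to the digamma function $\psi$, for each realization there is a point $\xi_t$ between $x_t$ and $x_t+c$ such that
$$\log\Gamma(x_t+c)-\log\Gamma(x_t)=c\,\psi(\xi_t).$$
Thus it suffices to show that $\mathbb{E}|\psi(\xi_t)|<\infty$, and the whole argument rests on controlling $\psi(\xi_t)$ by an integrable function of $x_t$.

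Since $\xi_t\in(x_t,x_t+c)$ and $x_t\ge\bar x$ almost surely, the monotonicity of $\psi$ on $(0,\infty)$ gives the two-sided bound $\psi(\bar x)\le\psi(\xi_t)\le\psi(x_t+c)$. I would then invoke the classical inequality $\psi(y)<\log y$, valid for every $y>0$, to get $\psi(\xi_t)\le\log(x_t+c)$, while the lower bound $\psi(\xi_t)\ge\psi(\bar x)$ is simply a finite constant. Combining these yields, almost surely,
$$|\psi(\xi_t)|\le|\psi(\bar x)|+|\log(x_t+c)|.$$

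It then remains to check that $\mathbb{E}|\log(x_t+c)|<\infty$. Here I would use that $x_t+c$ is bounded below by the positive constant $\bar x+c$, so $\log(x_t+c)\ge\log(\bar x+c)$ is bounded below, and that $\log(y)\le y-1$ gives $\log(x_t+c)\le x_t+c-1$ from above. Consequently $|\log(x_t+c)|\le|\log(\bar x+c)|+(x_t+c)$ almost surely, whose expectation is finite because $\mathbb{E}(x_t)<\infty$ by assumption. Taking expectations in the displayed bound and multiplying by $c$ then gives $\mathbb{E}|\log\Gamma(x_t+c)-\log\Gamma(x_t)|=c\,\mathbb{E}|\psi(\xi_t)|<\infty$, as claimed.

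The only delicate point is the sign behavior of $\psi$, which is negative near the origin and tends to $-\infty$ as its argument goes to $0$; this is precisely why the lower bound $x_t\ge\bar x>0$ is needed, since it confines $\xi_t$ to a region where $\psi$ is bounded below by the finite constant $\psi(\bar x)$. Everything else is routine, and no tools beyond the monotonicity of $\psi$ and the two elementary inequalities $\psi(y)<\log y$ and $\log y\le y-1$ are required.
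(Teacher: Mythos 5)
Your proof is correct, and it takes a genuinely different route from the paper's. The paper never differentiates: it uses the recurrence $\log\Gamma(x+1)-\log\Gamma(x)=\log(x)$ together with the monotonicity of $\log\Gamma$ on $(2,\infty)$ to get the pointwise bound $|\log\Gamma(x+c)-\log\Gamma(x)|\le (c+1)\log(x)\le (c+1)x$ for $x>2$, and then splits the expectation over the events $\{x_t\in[\bar x,2]\}$ and $\{x_t>2\}$, handling the first by continuity of $\log\Gamma$ on a compact set and the second by $\mathbb{E}(x_t)<\infty$. You instead apply the mean value theorem, $\log\Gamma(x_t+c)-\log\Gamma(x_t)=c\,\psi(\xi_t)$, and control $\psi(\xi_t)$ via monotonicity of $\psi$ and the classical inequality $\psi(y)<\log y$, which yields the single dominating function $c\big(|\psi(\bar x)|+|\log(\bar x+c)|+x_t+c\big)$ valid on the whole range, with no case split. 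What your approach buys is a cleaner, uniform bound (and it sidesteps the fact that $\log\Gamma$ is not monotone on all of $(0,\infty)$, which is precisely what forces the paper's split at $2$); the price is reliance on the digamma inequality $\psi(y)<\log y$, whereas the paper needs only the functional equation of $\Gamma$. One cosmetic remark: the intermediate point $\xi_t$ from the mean value theorem need not be a measurable function of $\omega$, so rather than writing $\mathbb{E}|\psi(\xi_t)|$ it is cleaner to pass directly to the pointwise inequality $|\log\Gamma(x_t+c)-\log\Gamma(x_t)|\le c\big(|\psi(\bar x)|+|\log(x_t+c)|\big)$, whose right-hand side is measurable and integrable; this does not affect the validity of your argument.
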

\begin{proof}

First, we note that $\log\Gamma(x)$ is monotone increasing in $x\in (2,\infty)$. Therefore, for any $c>0$ and $x>2$, we have that 
\begin{align*}
|\log\Gamma(x+c) - \log\Gamma(x)| & \le  (c+1)\log(x)\\
& \le (c+1) x,
\end{align*}
where the first inequality  follows immediately from the well know recursive equation $\log\Gamma(x+1)-\log\Gamma(x)=\log(x)$, together with the monotonicity of the gamma function in $(2,\infty)$. Finally, denoting with $\pmb{1}_{A}(x)$ the indicator function of a subset $A\in \mathbb{R}$, we obtain that  
\begin{align}
\mathbb{E}\big|\log\Gamma(x_t+c) - \log\Gamma(x_t)|\le& \mathbb{E}\big|\big(\log\Gamma(x_t+c) - \log\Gamma(x_t)\big)\pmb{1}_{[\bar x, 2]}(x_t)\big|\nonumber \\
&+\mathbb{E}\big|\big(\log\Gamma(x_t+c) - \log\Gamma(x_t)\big)\pmb{1}_{(2, \infty)}(x_t)\big|\nonumber\\
\le& \sup_{x\in [\bar x, 2]} (|\log\Gamma(x+c)|+|\log\Gamma(x)|) \label{sup_gamma}\\
&+(c+1)\mathbb{E}(x_t+2) <\infty, \label{e_gamma}
\end{align}
where (\ref{sup_gamma}) is finite because of the continuity of $\log \Gamma$ in the compact set $[\bar x, 2]$ and (\ref{e_gamma}) is finite given that $\mathbb{E}(x_t)<\infty$. This concludes the proof.
\end{proof}

\begin{lemma}\label{lemma100}
The following inequalities are satisfied for any $\lambda\in [\bar c,\infty)$, $y \in \mathbb{N}$ and $\pmb\xi=(r,\alpha)^\top\in \Xi$, where   is a compact set  $\Xi \subset (0,\infty)\times (2,\infty)$.

\begin{enumerate}[(i)]
\item $\left| {\partial \log p(y|\lambda,r,\alpha)}/{\partial \lambda}\right| \le c_\lambda.$
\item $\left| {\partial \log p(y|\lambda,r,\alpha)}/{\partial r}\right| \le c_r + a_r \ (y+\lambda).$
\item $\left| {\partial \log p(y|\lambda,r,\alpha)}/{\partial \alpha}\right| \le c_\alpha+a_\alpha \ y.$
\item $\left| {\partial^2 \log p(y|\lambda,r,\alpha)}/{\partial \lambda^2}\right| \le c_{\lambda\lambda}.$
\item $\left| {\partial^2 \log p(y|\lambda,r,\alpha)}/{\partial r^2}\right| \le c_{rr}+ a_{rr} \ \lambda.$
\item $\left| {\partial^2 \log p(y|\lambda,r,\alpha)}/{\partial \alpha^2}\right| \le c_{\alpha\alpha} +a_{\alpha\alpha} \ y.$ 
\item $\left| {\partial^2 \log p(y|\lambda,r,\alpha)}/{\partial \lambda\partial r}\right| \le c_{\lambda r}.$
\item $\left| {\partial^2 \log p(y|\lambda,r,\alpha)}/{\partial \lambda\partial \alpha}\right| \le c_{\lambda\alpha}.$
\item $\left| {\partial^2 \log p(y|\lambda,r,\alpha)}/{\partial r\partial \alpha}\right| \le c_{r\alpha}+a_{r\alpha}\ \lambda.$
\end{enumerate}
for some positive constants $c_{i}$, $a_{i}$, $c_{ij}$ and  $a_{ij}$, with $i,j\in\{\lambda,r,\alpha\}$.

\end{lemma}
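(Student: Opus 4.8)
The plan is to write $\log p(y\,|\,\lambda,r,\alpha)$ as an explicit sum of log-gamma terms and then differentiate it term by term, expressing every partial derivative through the digamma function $\psi$ and the trigamma function $\psi_1$, and bounding the resulting expressions with elementary monotonicity properties. Writing $\gamma=(\alpha-1)/r$, the identity $B(a,b)=\Gamma(a)\Gamma(b)/\Gamma(a+b)$ gives
\begin{align*}
\log p(y\,|\,\lambda,r,\alpha)=&\ \log\Gamma(y+r)-\log\Gamma(y+1)-\log\Gamma(r)+\log\Gamma(\alpha+r)-\log\Gamma(\alpha)\\
&+\log\Gamma(\gamma\lambda+y)-\log\Gamma(\gamma\lambda+y+\alpha+r)-\log\Gamma(\gamma\lambda)+\log\Gamma(\gamma\lambda+\alpha).
\end{align*}
The first line collects the terms depending on $(r,\alpha)$ only, while the four terms on the second line carry the location parameter through the argument $\gamma\lambda$. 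Every derivative of $\log p$ is then a finite combination of $\psi$ (first order) or $\psi_1$ (second order) evaluated at the arguments $y+r$, $r$, $\alpha+r$, $\alpha$, $\gamma\lambda+y$, $\gamma\lambda+y+\alpha+r$, $\gamma\lambda$ and $\gamma\lambda+\alpha$, multiplied by the chain-rule factors $\partial_r(\gamma\lambda)=-\gamma\lambda/r$ and $\partial_\alpha(\gamma\lambda)=\lambda/r$.

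The two families of bounds I would use are: (a) a digamma shift bound, namely that $x\mapsto\psi(x+c)-\psi(x)$ is positive, decreasing and hence uniformly bounded for $x$ bounded away from zero (it tends to $0$ as $x\to\infty$), together with $0\le\psi(y+r)-\psi(r)\le y/r$ for $y\in\mathbb{N}$ and $|\psi(x)|\le c+\log x\le c+x$ for the single, unbounded evaluations; and (b) a trigamma shift bound, that $\psi_1$ is positive and decreasing with $0\le\psi_1(x)-\psi_1(x+c)\le (c+1)/x^2$. Because $\lambda\ge\bar c$ and $(r,\alpha)$ ranges over the compact set $\Xi\subset(0,\infty)\times(2,\infty)$, the quantities $\gamma$, $r$, $\alpha$ and $\gamma\lambda\ge\gamma\bar c$ are bounded above and away from zero, so all constants produced are uniform over $\Xi$ and over $\lambda\in[\bar c,\infty)$.

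With these in hand, each of (i)--(ix) reduces to a bookkeeping exercise. For (i), $\partial_\lambda\log p=\gamma\big[(\psi(\gamma\lambda+y)-\psi(\gamma\lambda+y+\alpha+r))+(\psi(\gamma\lambda+\alpha)-\psi(\gamma\lambda))\big]$, a sum of two fixed-shift digamma differences times the bounded factor $\gamma$, hence bounded by a constant $c_\lambda$; this is exactly $s_t/\lambda$ from \eqref{score}. For (iv), (vii) and (viii) the same combination is differentiated once more, producing $\psi_1$-differences of order $(\gamma\lambda)^{-2}$ or $(\gamma\lambda+y)^{-2}$; the decisive point is that any chain-rule factor of order $\lambda$ multiplies such a difference, and $\lambda\,(\gamma\lambda)^{-2}=(\gamma^2\lambda)^{-1}\le(\gamma^2\bar c)^{-1}$ stays bounded, which is what yields the constant bounds there. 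For the $r$- and $\alpha$-derivatives (ii), (iii), (v), (vi), (ix) the chain-rule factors $-\gamma\lambda/r$ and $\lambda/r$ attach a factor of $\lambda$ to digamma differences, giving the linear-in-$\lambda$ terms, while the single evaluation $\psi(\gamma\lambda+y+\alpha+r)$ and the difference $\psi(y+r)-\psi(r)$ contribute the linear-in-$y$ terms; the asymmetry in (v), (vi) and (ix) follows because in those second derivatives the $y$-dependence enters only through trigamma values $\psi_1(\gamma\lambda+y+\cdots)$ that decay to zero, leaving only the $\lambda$ contribution, and symmetrically for the pure $\alpha$-derivative.

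The main obstacle is purely combinatorial: since $\gamma$ depends on both $r$ and $\alpha$, the $r$- and $\alpha$-derivatives couple the ``parameter-only'' log-gamma terms of the first line with the ``$\gamma\lambda$'' terms of the second line, so one must track carefully which $\psi$/$\psi_1$ combinations cancel into fixed-shift differences (giving constant bounds) and which retain a single unbounded evaluation or an attached factor of $\lambda$ or $y$ (giving linear bounds). The only genuinely quantitative input is the sharp trigamma decay $\psi_1(x)-\psi_1(x+c)\le (c+1)/x^2$, which guarantees that multiplying by $\lambda$ through $\gamma\lambda$ does not destroy boundedness once $\lambda\ge\bar c$; everything else is monotonicity of $\psi$ and $\psi_1$ together with compactness of $\Xi$.
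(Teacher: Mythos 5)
Your proposal is correct and follows essentially the same route as the paper: expand $\log p$ into log-gamma terms, reduce every derivative to digamma/trigamma evaluations, and control them with exactly the shift bounds $0\le\psi(z+c)-\psi(z)\le (c+1)/z$ and $0\le\psi_1(z)-\psi_1(z+c)\le (c+1)/z^2$ (the paper's inequalities (\ref{in2})--(\ref{in3})), combined with compactness of $\Xi$ and the lower bound $\lambda\ge\bar c$. The remaining work in the paper is precisely the case-by-case bookkeeping you describe, so no substantive difference.
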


\begin{proof}

Fist we show that, for any $z>0$ and $c\ge0$, the following inequalities are satisfied 
\begin{align}
0 \le\psi(z+c)- \psi(z)\le \frac{c+1}{z},\label{in2}\\
   0 \le\psi_1(z)-\psi_1(z+c)\le \frac{c+1}{z^2},\label{in3}
\end{align}
where  $\psi$ and $\psi_1$ denote the  digamma and trigamma functions, respectively. 
In particular, it is well known that the digamma function satisfies the following recurrence equation
$ \psi(z+1)-\psi(z)= \frac{1}{z},$ for any $z>0$.
Therefore, since  $\psi(z)$ is a monotone increasing for $z>0$, we immediately obtain that (\ref{in2}) is satisfied.
Similarly, the  inequality  in (\ref{in3}) is obtained noticing and the trigamma function satisfies the recurrence  equation
$ \psi_1(z)-\psi_1(z+1)=  \frac{1}{z^2},$ for any $z>0$, and  $\psi_1(z)$  is monotone decreasing for $z>0$.

In the following, we rely on (\ref{in2}) and (\ref{in3}) to show that the inequalities (i)-(ix) are satisfied. Here, for simplicity of notation, we define $\gamma = (\alpha-1)/r$.
\begin{enumerate}[(i)]
\item We obtain that
\begin{align*}
\left|\frac{\partial \log p(y |\lambda ,r,\alpha)}{\partial \lambda }\right| \le & \gamma  \Big(\psi(\gamma\lambda +y +\alpha +r) -\psi(\gamma\lambda +y)+    \psi(\gamma\lambda  +\alpha)-\psi(\gamma\lambda)\Big)\\
\le&    \frac{\gamma(\alpha+r+1)}{\gamma\lambda +  y } +\frac{\alpha+1}{\lambda }\le   \frac{  2\alpha +2+ r}{\bar c}\le c_\lambda,
\end{align*}
where the first inequality follows by standard differentiation together with the triangle inequality, the second by   (\ref{in2}), the third by taking the supremum over $\lambda \in [\bar c,\infty)$ and $y \in \mathbb{N}$, and the last by taking the supremum over $\pmb{\xi}$ in a compact set $\Xi$.
 
\item  Following similar steps as in (i), we obtain that
\begin{align*}
\left|\frac{\partial \log p(y|\lambda,r,\alpha)}{\partial r}\right|  \le &    \psi(y+r)-\psi(r)  + |\psi(\alpha+r)|+ |\psi( \gamma\lambda+y+\alpha +r)| \\
& +\frac{\gamma\lambda}{r} \Big(\psi(\gamma\lambda+y+\alpha +r)-\psi(\gamma\lambda+y)  + \psi(\gamma\lambda+\alpha)-\psi(\gamma\lambda) \Big)\\
\le & \frac{y+1}{r}+ |\psi(\alpha+r)| + \gamma\lambda+y+\alpha +r+ 1    + \frac{(\alpha+r+1) \gamma \lambda}{r(\gamma \lambda +y)} + \frac{\alpha+1}{r}\\
\le &  c_r+ a_r (y+\lambda),
\end{align*}
where the first inequality follows by standard differentiation together with the triangle inequality, the second by   (\ref{in2}) and noticing that $|\psi(z)| < z+1$ for $z>1$, the third by taking the supremum over $\lambda \in [\bar c,\infty)$, $y \in \mathbb{N}$ and $\pmb{\xi}\in\Xi$.

\item Similarly as before, we have 
\begin{align*}
\left|\frac{\partial \log p(y|\lambda,r,\alpha)}{\partial \alpha}\right|&\le   \left(1+\frac{\lambda}{r}\right)\Big(\psi(\gamma\lambda +\alpha +y+r)-\psi(\gamma\lambda+\alpha)\Big)\\
&\quad  +\frac{\lambda}{r} \Big(\psi (\gamma\lambda+y )-\psi (\gamma\lambda )\Big)+|\psi(\alpha+r)|+|\psi(\alpha)|\\
&\le\frac{(y+r+1)(\lambda+r)}{r(\gamma\lambda+\alpha)}+  \frac{y+1}{r \gamma}+|\psi(\alpha+r)|+|\psi(\alpha)|\le c_\alpha+ a_\alpha y,
\end{align*}
where the first inequality follows by standard differentiation together with the triangle inequality, the second by an application of (\ref{in2}) and the last by taking the supremum over $\lambda \in [\bar c,\infty)$ and $\pmb{\xi}\in\Xi$.

\item We obtain that
 \begin{align*}
\left|\frac{\partial^2 \log p(y|\lambda,r,\alpha)}{\partial \lambda^2}\right| \le& \gamma^2  \Big(\psi_1(\gamma\lambda+y)-\psi_1(\gamma\lambda +y+\alpha+r)   +  \psi_1(\gamma\lambda)-\psi_1( \gamma\lambda+\alpha) \Big)\\
 \le &     \frac{\gamma^2 (\alpha+r+1)}{( \gamma\lambda+y)^2} +\frac{\alpha+1}{\lambda^2}
\le     \frac{2\alpha+2+r}{\bar c^2}\le c_{\lambda\lambda},
\end{align*}
where the first inequality follows by standard differentiation together with the triangle inequality, the second by (\ref{in3}) and the last two by taking the supremum over $\lambda\in [\bar c,\infty)$, $\pmb\xi\in \Xi$ and $y\in\mathbb{N}$.

\item  We have that
\begin{align*}
\left|\frac{\partial^2 \log p(y |\lambda ,r,\alpha)}{\partial r^2}\right|\le&  \psi_1(y+r)+\psi_1(r) + \psi_1(\alpha+r)  +\frac{2\gamma\lambda+r}{r}\psi_1(\gamma\lambda+y+\alpha +r)\\
& +\frac{\gamma^2\lambda^2}{r^2} \Big(\psi_1(\gamma\lambda+y)-\psi_1(\gamma\lambda+y+\alpha +r)  + \psi_1(\gamma\lambda)-\psi_1(\gamma\lambda +\alpha) \Big)\\
& +\frac{2\gamma\lambda}{r^2} \Big(\psi(\gamma\lambda+y+\alpha +r)-\psi(\gamma\lambda+y)   +  \psi(\gamma\lambda+\alpha)-\psi(\gamma\lambda) \Big)\\
\le&    \frac{2\gamma\lambda+r}{r}\psi_1 (r) +\frac{\gamma^2\lambda^2(\alpha+r+1)}{r^2(\gamma \lambda+y)^2} + \frac{3(\alpha+1)}{r^2} + \frac{2\gamma\lambda (\alpha+r+1)}{r^2(\gamma\lambda+y)}  \\
\le  &  \frac{2\gamma\lambda+r}{r}\psi_1 (r)+\frac{3(r+2\alpha+2)}{r^2}\le c_{rr} +a_{rr}\ \lambda,
\end{align*}
where the first inequality follows by standard differentiation, the second by (\ref{in2})  and (\ref{in3}) together with the fact that $\psi_1(x)$, $x>0$, is strictly positive and monotone decreasing and the last two inequality follow by taking the supremum over $\lambda\in [\bar c,\infty)$, $\pmb\xi\in \Xi$ and $y\in\mathbb{N}$.

\item We obtain that
\begin{align*}
\left|\frac{\partial^2 \log p(y|\lambda,r,\alpha)}{\partial \alpha^2}\right|&\le   \left(1+\frac{\lambda}{r}\right)^2\Big(\psi_1(\gamma\lambda+\alpha)-\psi_1(\gamma\lambda +\alpha +y+r)\Big)\\
&\quad  +\frac{\lambda^2}{r^2} \Big(\psi_1(\gamma\lambda)-\psi_1(\gamma\lambda+y)\Big)+\psi_1(\alpha+r)+\psi_1(\alpha)\\
&\le  \frac{(r+ \lambda)^2(y+r+1)}{r^2(\gamma\lambda+\alpha)^2}    + \frac{y+1}{r^2\gamma^2} +2\psi_1(\alpha)\le c_{\alpha\alpha}+a_{\alpha\alpha}\ y,
\end{align*}
where the first inequality follows by standard differentiation together with the triangle inequality, the second by (\ref{in3}) and the third by taking the supremum over $\lambda\in [\bar c,\infty)$ and $\pmb\xi\in \Xi$.

\item As before, we obtain
\begin{align*}
\left|\frac{\partial^2 \log p(y|\lambda,r,\alpha)}{\partial \lambda\partial r}\right|\le &\gamma \psi_1 (\gamma\lambda +y +\alpha +r ) \\
&+ \frac{\gamma^2\lambda}{r}  \Big( \psi_1( \gamma\lambda+y)-\psi_1(\gamma\lambda+y +\alpha +r)
+ \psi_1(\gamma\lambda)-  \psi_1(\gamma\lambda +\alpha)\Big)\\
&+ \frac{\gamma}{r}  \Big(\psi(\gamma\lambda+y +\alpha +r)-\psi(\gamma\lambda+y )  +  \psi(\gamma\lambda+\alpha)-\psi(\gamma\lambda)\Big)\\
\le &\gamma\psi_1(\alpha)+ \frac{\gamma^2\lambda (\alpha+r+1)}{r(\gamma\lambda+y)^2}+   \frac{2(\alpha+1)}{r\lambda}+    \frac{\gamma(\alpha+r+1)}{r(\gamma\lambda+y)}\le  c_{\lambda r},
\end{align*}
where the first inequality follows by standard differentiation together with the triangle inequality, the second by  (\ref{in2}) and (\ref{in3}) and the third by taking the supremum over $\lambda\in [\bar c,\infty)$, $y\in \mathbb{N}$ and $\pmb\xi\in \Xi$.

\item  We have that
\begin{align*}
\left|\frac{\partial^2 \log p(y|\lambda,r,\alpha)}{\partial \lambda\partial \alpha}\right| \le & \gamma \psi_1 (\gamma\lambda+y +\alpha +r )+ \gamma \psi_1 (\gamma\lambda +\alpha)\\
&+ \frac{\gamma\lambda}{r}   \Big(\psi_1(\gamma\lambda+y)-\psi_1 (\gamma\lambda +y +\alpha +r ) +      \psi_1(\gamma\lambda)-\psi_1(\gamma\lambda +\alpha)\Big)\\
&+     \frac{1}{r}  \Big( \psi (\gamma\lambda +y +\alpha +r)-\psi(\gamma\lambda +y)+   \psi (\gamma\lambda +\alpha)-\psi (\gamma\lambda) \Big)\\
\le & 2\gamma \psi_1(\alpha) + \frac{\gamma\lambda(\alpha+r+1)}{r(\gamma\lambda+y)^2}+      \frac{2(\alpha+1)}{r\gamma\lambda} + \frac{\alpha+r+1}{r(\gamma\lambda+y)}\le c_{\lambda\alpha},
\end{align*}
where the first inequality follows by standard differentiation together with the triangle inequality, the second by  (\ref{in2}) and (\ref{in3}) and the third by taking the supremum over $\lambda\in [\bar c,\infty)$, $y\in \mathbb{N}$ and $\pmb\xi\in \Xi$.

\item As before, we obtain
\begin{align*}
\left|\frac{\partial^2 \log p(y|\lambda,r,\alpha)}{\partial r\partial \alpha}\right|\le & \psi_1(\alpha+r) + \frac{\gamma\lambda}{r}\psi_1(\gamma\lambda+\alpha )  + \frac{r+(\gamma+1)\lambda}{r}\psi_1(\gamma\lambda+y+\alpha +r) \\
& +\frac{\lambda}{r^2}\Big(   \psi(\gamma\lambda+y+\alpha +r)-\psi(\gamma\lambda+y)   + \psi(\gamma\lambda+\alpha)-\psi (\gamma\lambda)\Big)\\
& +\frac{\gamma\lambda^2}{r^2}  \Big(\psi_1 (\gamma\lambda+y)-\psi_1(\gamma\lambda+y+\alpha +r)  +  \psi_1 (\gamma\lambda )-\psi_1 (\gamma\lambda+\alpha) \Big)\\
\le & \frac{2r+(2\gamma+1)\lambda}{r}\psi_1(\alpha) +\frac{\lambda(\alpha+r+1)}{r^2(\gamma\lambda+y)}+\frac{2(\alpha+1)}{r^2\gamma}+\frac{\gamma\lambda^2 (\alpha+r+1)}{r^2(\gamma\lambda+y)^2}\\
\le&  c_{r\alpha}+a_{r\alpha}\lambda,
\end{align*}
where the first inequality follows by standard differentiation together with the triangle inequality, the second by  (\ref{in2}) and (\ref{in3}) and the third by taking the supremum over $\lambda\in [\bar c,\infty)$, $y\in \mathbb{N}$ and $\pmb\xi\in \Xi$.
\end{enumerate}
\end{proof}

\bibliographystyle{apalike}
\bibliography{references}

\end{document}